\numberwithin{equation}{section}
\newtheorem{thm}{Theorem}[section]
\newtheorem{cor}[thm]{Corollary}
\newtheorem{lem}[thm]{Lemma}
\newtheorem{prop}[thm]{Proposition}
\newtheorem{defin}[thm]{Definition}
\newtheorem{rem}[thm]{Remark}
\newtheorem{example}[thm]{Example}
\newcommand{\R}{\mathbb{R}}
\newcommand{\Z}{\mathbb{Z}}
\newcommand{\E}{\mathbb{E}}
\newcommand{\Ind}{\textbf{1}}
\newcommand{\bi}{{\bf i}}
\newcommand{\bj}{{\bf j}}
\newcommand{\M}{\mathbf{M}}
\newcommand{\G}{\mathbf{G}}
\newcommand{\X}{\mathbb{X}}
\newcommand{\bX}{\mathbf{X}}
\def\liml{\lim\limits}
\def\supl{\sup\limits}
\def\suml{\sum\limits}
\def\intl{\int\limits}
\def\cupl{\mathop{\cup}\limits}
\title{Ergodicity for infinite particle systems with locally conserved quantities.
\thanks{
{Supported by EPSRC 
EP/D05379X/1}} 
 }
\author{J. INGLIS$^\S$, M. NEKLYUDOV$^\ddag$ and
B. ZEGARLI\'NSKI$^\sharp${\footnote{ On leave from Imperial College London}}
\\ 
%
$^\S$Department of Mathematics, Imperial College London, UK \\ 
$^\ddag$Department of Mathematics, University of York, Heslington,
UK 
\\ 
$^\dag$CNRS, Toulouse, France
}
\date{}
\begin{document}
\maketitle

\begin{abstract}
\noindent We analyse certain degenerate infinite dimensional sub-elliptic generators,
and obtain estimates on the long-time behaviour of the corresponding Markov semigroups that describe a certain model of heat conduction. In particular, we establish
ergodicity of the system for a family of invariant measures, and show that the
optimal rate of convergence to equilibrium is polynomial. Consequently,
there is no spectral gap, but a Liggett-Nash type inequality is shown to hold.
\\

\noindent{Keywords: H\"ormander type generators, locally conserved quantities,  Liggett-Nash inequality, ergodicity.}
\end{abstract}


\tableofcontents

\section{Introduction}\label{sec:intro} \label{Section.1}
\noindent In this paper we study a class of Markov semigroups  $(P_t)_{t\geq 0}$ whose  generators are defined in H\"ormander form by an infinite family of non-commuting fields as follows

\begin{equation}\label{I.1}
\mathcal{L}\equiv \sum_{\gamma} \bX_\gamma^2.
\end{equation}
In particular we will be interested in the situation when we have ``locally preserved quantities'', that is when any operator
\[
\mathcal{L}_\Lambda\equiv  \sum_{\gamma\in\Lambda} \bX_\gamma^2,
\]
defined with a finite set of indices $\Lambda$, has a non-trivial set of harmonic functions, while for the full generator $\mathcal{L}$, this is not the case.  One should therefore expect that the corresponding semigroup is ergodic. We will assume that the fields $\bX_\gamma$ are homogeneous of the same degree, in the sense that there is a natural dilation generator $D$ such that
\[
[D,\bX_\gamma]=\lambda \bX_\gamma,
\]
with $\lambda\in\mathbb{R}$ independent of $\gamma$. However, unlike in H\"ormander theory, we admit a situation when a commutator of the degenerate fields of any order does not remove degeneration.
To model such a situation we consider an infinite product space 
and fields of the following form

\[
\bX_{\bf ij}\equiv\partial_{\bf i}V(x)\partial_{\bf j} - \partial_{\bf j}V(x)\partial_{\bf i},
\]
with $\partial_{\bf i}$ denoting the partial derivative with respect to the coordinate with index ${\bf i}$, and $\partial_{\bf i}V(x)$ indicating some (polynomial) coefficients.

Generators of a similar type appear in the study of dissipative dynamics in which certain quantities are preserved --- see for example \cite{B2007, B-O} and \cite{F-N-O}, where systems of harmonic oscillators perturbed by conservative noise are considered.  A special case of the system we investigate (see Example \ref{ex:GeneratorMainExample} below) can be thought of as a limiting case of the models considered in these works, when the conservative noise dominates the deterministic interaction between oscillators. It is interesting that our results (see Corollary \ref{cor:ConvergenceRate}) show ergodicity of the system even in such situations!
A further example of a physical model very closely related to our set-up is the heat conduction model discussed in \cite{B2008} and \cite{G-K-R}.
For more information in this direction, in particular in connection with an effort to explain the so-called Fourier law of heat conduction, we refer to a nice review \cite{BLRB00}, as well as \cite{BLLO} and the references therein.

The classical approach to studying the asymptotic behaviour of conservative reversible interacting particle systems employs either functional inequalities and some special norm-bound
of the semigroup (see for instance \cite{BZ99, BZ99b} and \cite{PKZ}),
or some kind of approximation of the dynamics by finite dimensional ones, together with sharp estimates of their spectral gaps (\cite{JLQY1999, LY2003}). The approach we take is quite different, in the sense that we do not use any approximation techniques, but rather exploit the structure of the Lie algebra generated by the corresponding vector fields to derive the necessary estimates directly. We would like to note that a straight-forward application of the classical approach in our case is not possible.  This is because any finite dimensional approximation cannot be ergodic, since there is a formal fixed point --- this is discussed in more detail below.  A different possibility would be to consider the restriction of the finite dimensional dynamics to the conservation surface. This approach will be considered elsewhere in a forthcoming manuscript by Z. Brze\'zniak and M. Neklyudov.


One other motivation to study the semigroup $(P_t)_{t\geq 0}$ associated to this particular generator comes from the fact that, since $V$ is formally conserved under the action of $P_t$, we can see that there is a family of invariant measures formally given by ``$e^{-\frac{V}{r}}dx$" for all $r>0$.
On the one hand, the semigroup $(P_t)_{t\geq0}$ is quite simple, since we can calculate many quantities we are interested in directly. On the other hand, standard methods from interacting particle theory \cite{Lig04, LZ2007} do not help because they require some type of strong non-degeneracy condition such as  H\"{o}rmander's condition, which is not satisfied in our case. Another difficulty stems from the intrinsic difference between the infinite dimensional case we consider, and the finite dimensional case i.e. the case when $V$ depends on only a finite number of variables, and instead of the lattice we use its truncation with a periodic boundary condition. Indeed, as already mentioned, in the finite dimensional case we can notice that $V$ is a non-trivial fixed point for $P_{t}$, and therefore  the semigroup is strictly not ergodic.
This reasoning turns out to be incorrect in the infinite dimensional case. The situation here is more subtle because the expression $V$ is only formal (and would be equal to infinity on the support set of the invariant measure).

We give a detailed study of the case 
when the coefficients of the fields are linear, providing analysis of the corresponding spectral theory and showing that the system is ergodic with polynomial rate of convergence to equilibrium.

The organisation of the paper is as follows. In Section 2 we introduce the basic notation and state an infinite system of stochastic differential equations of interest to us.
In Section 3 we show the existence of a mild solution and continue in Sections 4 and 5 with some discussion of general properties of the corresponding semigroup,
such as the existence of a family of invariant measures, strong continuity, positivity and contractivity properties in $L^p$-spaces. Because of the special non-commutative features of the fields and the form of the generator, these matters are slightly more cumbersome than otherwise.
Section 6 provides a certain characterisation of invariant (Sobolev-type) subspaces, while Section 7 is devoted to the demonstration of ergodicity with optimal rate of convergence to equilibrium. In Section 8 we use previously obtained information to derive Liggett-Nash-type inequalities.  In Section 9 we consider a generalised dynamics
of a similar type, allowing now the inclusion of a first order term $-\beta D$ with some parameter $\beta\in[0,\infty)$ in the generator. We show that in such families one observes a change in the behaviour of the decay to equilibrium from exponential to algebraic (when the additional control parameter $\beta$ goes towards zero).
Finally in the last section we provide a further application of our ergodicity results.

\section{The system}
\label{The system} \label{Section.2}

Throughout this paper we will work in the following setting.

~

\paragraph{\textit{The Lattice:}}
Let $\Z^N$ be the $N$-dimensional square lattice for some fixed $N\in\mathbb{N}$.  We equip $\Z^N$ with the $l_1$ lattice metric $dist(\cdot, \cdot)$
defined by
\[
dist({\bf i}, {\bf j}) := |{\bf i} -{\bf j}|_1 \equiv \sum_{l=1}^N|i_l - j_l|
\]
for ${\bf i}=(i_1, \dots, i_N), {\bf j} = (j_1, \dots, j_N) \in \Z^N$.  For ${\bf i},{\bf j}\in\Z^N$ we will write $\bi \sim \bj$ whenever
$dist({\bf i}, {\bf j}) =1$.  When $\bi \sim \bj$ we say that $\bi$ and $\bj$ are neighbours in the lattice.

\paragraph{\textit{The Configuration Space:}} Let $\Omega\equiv (\R)^{\Z^N}$.
Define the Hilbert spaces
\[
E_\alpha = \left\{ x \in \Omega : |x|^2_{E_\alpha} := \sum_{{\bf i}\in\Z^N}x_{\bf i}^2e^{-\alpha |{\bf i}|_1} <\infty\right\}
\]
for $\alpha>0$, and
\[
H = \left\{ (h^{(1)}, \dots, h^{(N)}) \in \left(\Omega\right)^N:  |(h^{(1)}, \dots, h^{(N)})|^2_H := \sum_{{\bf i}\in\Z^N}
\sum_{k=1}^N\left(h_{\bf i}^{(k)}\right)^2 < \infty\right\},
\]
with inner products given by
\[
\langle x, y\rangle_{E_\alpha} := \sum_{{\bf i} \in\Z^N}x_{\bf i}y_{\bf i}e^{-\alpha|{\bf i}|_1}
\]
for $x, y\in E_\alpha$ and
\[
\langle (g^{(1)}, \dots, g^{(N)}), (h^{(1)}, \dots, h^{(N)})\rangle_H : = \sum_{{\bf i}\in\Z^N} \sum_{k=1}^Ng_{\bf i}^{(k)}h_{\bf i}^{(k)}
\]
for $ (g^{(1)}, \dots, g^{(N)}), (h^{(1)}, \dots, h^{(N)})\in H$ respectively.

\paragraph{\textit{The Gibbs Measure:}}

Let $\mu_{\bf G}$ be a Gaussian probability measure on $(E_\alpha, \mathcal{B}(E_\alpha))$ with mean zero and covariance ${\bf G}$.  We assume that
the inverse ${\bf G}^{-1}$ of the covariance is of finite range i.e.
\[
{\bf M}_{{\bf i},{\bf j}} := {\bf G}^{-1}_{{\bf i},{\bf j}} = 0\quad \mathrm{if}\ dist({\bf i},{\bf j})>R,
\]
and that $|{\bf M}_{{\bf i},{\bf j}}|\leq M$ for all ${\bf i},{\bf j}\in\Z^N$.

\paragraph{\textit{The System:}}

Let
\[
W = \left\{\left(W^{(1)}, \dots, W^{(N)}\right)\right\}
\]
be a cylindrical Wiener process in $H$ (see for instance \cite{D-Z}).

We introduce the following notation: for ${\bf i}=(i_1, \dots, i_N)\in\Z^N$ define for $k\in\{1, \dots, N\}$
\[
{\bf i}^{\pm}(k) := (i_1, \dots, i_{k-1}, i_k \pm 1, i_{k+1}, \dots, i_N).
\]
We also define, for $x\in E_\alpha$, ${\bf i}\in\Z^N$,
\[
V_{\bf i}(x) := \sum_{{\bf j}\in\Z^N}x_{\bf i}{\bf M}_{{\bf i},{\bf j}}x_{\bf j},
\]
which is finite since ${\bf M}_{{\bf i},{\bf j}} =0$ if $dist({\bf i}, {\bf j})>R$, and for all finite subsets $\Lambda \subset \Z^N$ set
\[
V_{\Lambda}(x) := \sum_{{\bf i}\in\Lambda} V_{\bf i}(x).
\]
Using the formal expression
\[
V(x) := \frac12\sum_{{\bf i}\in\mathbb{Z}^N} V_{\bf i}(x),
\]
it will be convenient to simplify the notation for  $\partial_{\bf i} V_{\bf i}$ as follows
\begin{align*}
\partial_{\bf i}V(x)  = \frac12\partial_{\bf i}\left(\sum_{{\bf j},{\bf l}\in\mathbb{Z}^N}x_{\bf j} {\bf M}_{{\bf j},{\bf l}}x_{\bf l}\right)
\equiv  \sum_{{\bf j}\in\Z^N}{\bf M}_{{\bf i},{\bf j}}x_{\bf j}= \partial_{\bf i} V_{\bf i}.
\end{align*}

We consider the following system of Stratonovich SDEs:
\begin{align}
\label{SDE system strat}
dY_{\bf i}(t) & = \sum_{k=1}^N\left(\partial_{{\bf i}^-(k)}V(Y(t))\circ dW_{{\bf i}^-(k)}^{(k)}(t) - \partial_{{\bf i}^+(k)}V(Y(t))\circ dW_{{\bf
i}}^{(k)}(t)\right),
\end{align}
for ${\bf i}\in\Z^N$ and $t\geq0$.

\section{Existence of a  mild solution} \label{Section.3}
\label{solution}
In this section we show that the system \eqref{SDE system strat} has a mild solution $Y(t)$ taking values in the Hilbert space $E_\alpha$.

For the existence of a mild solution, the first step is to write \eqref{SDE system strat} in It\^o form.  To this end we have
\begin{align}
\label{Ito1}
dY_{\bf i}(t) & = \sum_{k=1}^N\left(\partial_{{\bf i}^-(k)}V(Y(t)) dW_{{\bf i}^-(k)}^{(k)}(t) - \partial_{{\bf i}^+(k)}V(Y(t)) dW_{\bf
i}^{(k)}(t)\right) \nonumber \\
& \quad + \frac{1}{2}\sum_{k=1}^N\left( d\left[\partial_{{\bf i}^{-}(k)}V(Y(\cdot)), W^{(k)}_{{\bf i}^-(k)}(\cdot)\right]_t - d\left[\partial_{{\bf
i}^{+}(k)}V(Y(\cdot)), W^{(k)}_{\bf i}(\cdot)\right]_t \right)
\end{align}
for all $i\in\Z^N$ and $t\geq 0$, where $[\cdot,\cdot]_t$ is a quadratic covariation (see for example p.~61 of \cite{[Ikeda-1981]}).
Hence, by It\^o's formula,
\begin{align}
&\left[\partial_{{\bf i}^{-}(k)}V(Y(\cdot)), W^{(k)}_{{\bf i}^-(k)}(\cdot)\right]_t = \left[\sum_{{\bf j}\in\Z^N}\int_0^\cdot\partial_{\bf j}
\partial_{{\bf i}^{-}(k)}V(Y(s))dY_{\bf j}(s), \int_0^\cdot dW^{(k)}_{{\bf i}^{-}(k)}(s)\right]_t \nonumber \\
&\quad = \sum_{{\bf j}\in\Z^N}\left[\int_0^\cdot \partial_{\bf j}\partial_{{\bf i}^{-}(k)}V(Y(s))\partial_{{\bf j}^-(k)}V(Y(s))dW_{{\bf
j}^-(k)}^{k}(s), \int_0^\cdot dW_{{\bf i}^-(k)}^k(s)\right]_t \nonumber\\
&\qquad -  \sum_{{\bf j}\in\Z^N}\left[\int_0^\cdot \partial_{\bf j}\partial_{{\bf i}^{-}(k)}V(Y(s))\partial_{{\bf j}^+(k)}V(Y(s))dW_{\bf j}^{k}(s),
\int_0^\cdot dW_{{\bf i}^-(k)}^k(s)\right]_t \nonumber\\
& \quad = \int_0^t \partial^2_{{\bf i},{\bf i}^-(k)}V(Y(s))\partial_{{\bf i}^-(k)}V(Y(s))ds
- \int_0^t\partial^2_{{\bf i}^{-}(k)}V(Y(s))\partial_{\bf i}V(Y(s))ds.\nonumber
\end{align}
By a similar calculation, and using this in \eqref{Ito1}, we see that
\begin{align}
\label{Ito2}
dY_{\bf i}(t) & = \sum_{k=1}^N\left(\partial_{{\bf i}^-(k)}V(Y(t)) dW_{{\bf i}^-(k)}^{(k)}(t) - \partial_{{\bf i}^+(k)}V(Y(t)) dW_{\bf
i}^{(k)}(t)\right)\nonumber \\
& \quad - \frac{1}{2}\sum_{k=1}^N\left\{\left(\partial^2_{{\bf i}^-(k)}V(Y(t))+ \partial^2_{{\bf i}^+(k)}V(Y(t))\right)\partial_{\bf i}V(Y(t))\right.
\nonumber \\
& \quad \qquad - \left.\partial^2_{{\bf i}, {\bf i}^-(k)}V(Y(t))\partial_{{\bf i}^-(k)}V(Y(t))  - \partial^2_{{\bf i},{\bf
i}^+(k)}V(Y(t))\partial_{{\bf i}^+(k)}V(Y(t))\right\}dt
\end{align}
for all $i\in\Z^N$.

Recall now that $\partial_{\bf j}V(x) = \sum_{{\bf l}\in\Z^N}{\bf M}_{{\bf j}, {\bf l}}x_{\bf l}$ for all $j\in\Z^N$, so that $\partial^2_{{\bf i},{\bf j}}V(x) =  {\bf
M}_{{\bf i},{\bf j}}$, $\forall i,j\in\Z^N$.  Thus the system \eqref{Ito2} can be written as
\begin{align}
\label{Ito3}
dY_{\bf i}(t) & = \sum_{k=1}^N\left(\partial_{{\bf i}^-(k)}V(Y(t)) dW_{{\bf i}^-(k)}^{(k)}(t) - \partial_{{\bf i}^+(k)}V(Y(t)) dW_{\bf
i}^{(k)}(t)\right)\nonumber \\
& \quad - \frac{1}{2}\sum_{k=1}^N\Big\{\left({\bf M}_{{\bf i}^-(k), {\bf i}^-(k)} + {\bf M}_{{\bf i}^+(k), {\bf i}^+(k)}\right)\partial_{\bf i}V(Y(t))
\nonumber \\
& \quad \qquad - {\bf M}_{{\bf i}, {\bf i}^-(k)}\partial_{{\bf i}^-(k)}V(Y(t))  - {\bf M}_{{\bf i}, {\bf i}^+(k)}\partial_{{\bf
i}^+(k)}V(Y(t))\Big\}dt
\end{align}
for all $i\in\Z^N$ and $t\geq0$.
We now claim that we can write this system in operator form:
\begin{equation}
\label{Ito op form}
dY(t) = AY(t)dt + B(Y(t))dW(t),
\end{equation}
where $A$ is a bounded linear mapping from $E_\alpha$ to $E_\alpha$ given by
\begin{align}
\label{A}
(Ax)_{\bf i} &:= \sum_{k=1}^{N}a_{\bf i}^{(k)}(x), \quad {\bf i} \in \Z^N,
\end{align}
with
\begin{align}
\label{alpha}
a_{\bf i}^{(k)}(x) & = - \frac{1}{2}\left\{\left({\bf M}_{{\bf i}^-(k), {\bf i}^-(k)} + {\bf M}_{{\bf i}^+(k), {\bf i}^+(k)}\right)\sum_{{\bf
l}\in\Z^N}{\bf M}_{{\bf l}, {\bf i}}x_{\bf l}\right.  \nonumber \\
& \quad \qquad - \left.{\bf M}_{{\bf i}, {\bf i}^-(k)}\sum_{{\bf l}\in\Z^N}{\bf M}_{{\bf l}, {\bf i}^-(k)}x_{\bf l}  - {\bf M}_{{\bf i}, {\bf
i}^+(k)}\sum_{{\bf l}\in\Z^N} {\bf M}_{{\bf l}, {\bf i}^+(k)}x_{\bf l}\right\},
\end{align}
and $B: E_\alpha \to L_{HS}(H, E_\alpha)$ (here $L_{HS}(H, E_\alpha)$ denotes the space of Hilbert-Schmidt operators from $H$ to $E_\alpha$) is a bounded linear operator given by
\begin{align}
\label{B}
\left(B(x)(h^{(1)}, \dots, h^{(N)})\right)_{\bf i} := \sum_{k=1}^N \left(\partial_{{\bf i}^-(k)}V(x)h^{(k)}_{{\bf i}^-(k)} - \partial_{{\bf
i}^+(k)}V(x)h^{(k)}_{\bf i}\right)
\end{align}
for $x\in E_\alpha, (h^{(1)}, \dots, h^{(N)})\in H$ and ${\bf i}\in\Z^N$.

Indeed, the fact that $A:E_\alpha \to E_\alpha$ is a bounded linear operator follows from the fact that the constants ${\bf M}_{{\bf i},{\bf j}}$ are
assumed to be uniformly bounded.  To show that $B\in L(E_\alpha, L_{HS}(H, E_\alpha))$, first define, for ${\bf i}\in\Z^N$, $e({\bf i})\in\Omega$ by
\[
(e({\bf i}))_{\bf j} :=
\begin{cases}
1  & \mathrm{if}\ {\bf j}={\bf i}, \\
0 & \mathrm{otherwise},
\end{cases}
\]
and for ${\bf i}\in\Z^N, k\in\{1, \dots, N\}$, let $f_{\bf i}^k$ be the element in $H$ given by
\[
f_{\bf i}^k := (0,\dots, e({\bf i}), \dots, 0),
\]
where the $e({\bf i})$ occurs in the $k$-th coordinate.  Then
\[
\left\{f_{\bf i}^k: {\bf i}\in\Z^N, k\in\{1, \dots, N\}\right\}
\]
is an orthonormal basis for $H$.  Let $x\in E_\alpha$.  Then
\[
\|B(x)\|^2_{HS} = \sum_{{\bf i}\in\Z^N}\sum_{k=1}^N\left|B(x)(f^k_{\bf i})\right|^2_{E_\alpha}.
\]
Now by definition
\[
\left(B(x)(f^k_{\bf i})\right)_{\bf j} = \partial_{{\bf j}^-(k)}V(x)\left(e({\bf i})\right)_{{\bf j}^-(k)} - \partial_{{\bf j}^+(k)}V(x)(e({\bf
i}))_{\bf j}
\]
so that
\begin{align}
\left|B(x)(f_{\bf i}^k)\right|^2_{E_\alpha} &= \sum_{{\bf j}\in\Z^N}\left(\partial_{{\bf j}^-(k)}V(x)\left(e({\bf i})\right)_{{\bf j}^-(k)} -
\partial_{{\bf j}^+(k)}V(x)(e({\bf i}))_{\bf j}\right)^2e^{-\alpha|{\bf j}|_1} \nonumber \\
&= \left(\partial_{\bf i}V(x)\right)^2e^{-\alpha|{\bf i}^+(k)|_1} + \left(\partial_{{\bf i}^+(k)}V(x)\right)^2e^{-\alpha|{\bf i}|_1}\nonumber \\
&\leq Ce^{(R+1)\alpha}\left[\left(\sum_{{\bf l}:|{\bf l}-{\bf i}|_1\leq R}x_{\bf l}^2e^{-\alpha|{\bf l}|_1}\right) + \left(\sum_{{\bf l}:|{\bf
l}-{\bf
i}^+(k)|_1 \leq R}x_{\bf l}^2e^{-\alpha|{\bf l}|_1}\right)\right]\nonumber 
\end{align}
where $C=((2R)^N+1)M^2$. Thus
\begin{align*}
\|B(x)\|_{HS} &= \sum_{{\bf i}\in\Z^N}\sum_{k=1}^N\left|B(x)(f^k_{\bf i})\right|^2_{E_\alpha} \\
&\leq Ce^{(R+1)\alpha}\sum_{k=1}^N\sum_{{\bf i}\in\Z^N}\left[\left(\sum_{{\bf l}:|{\bf l}-{\bf i}|_1\leq R}x_{\bf l}^2e^{-\alpha|{\bf l}|_1}\right) +
\left(\sum_{{\bf l}:|{\bf l}-{\bf i}^+(k)|_1 \leq R}x_{\bf l}^2e^{-\alpha|{\bf l}|_1}\right)\right] \\
&= 2N((2R)^N+1)Ce^{(R+1)\alpha}|x|^2_{E_\alpha},
\end{align*}
which proves the claim that $B\in L(E_\alpha, L_{HS}(H, E_\alpha))$.

We thus have the following existence theorem for our system.
\begin{prop}
\label{existence}
Consider the stochastic evolution equation
\begin{equation}
\label{system}
dY(t) = AY(t)dt + B(Y(t))dW(t), \qquad Y_0 = x\in E_\alpha,\ t\geq0,
\end{equation}
where $A$ and $B$ are given by \eqref{A} and \eqref{B} respectively, and $(W(t))_{t\geq0}$ is a cylindrical Wiener process in $H$.  This equation has
a mild solution $Y$ taking values in the Hilbert space $E_\alpha$, unique up to equivalence among the processes satisfying
\[
\mathbb{P}\left(\int_0^T|Y(s)|_{E_\alpha}^2ds < \infty\right) = 1.
\]
Moreover, it has a continuous modification.
\end{prop}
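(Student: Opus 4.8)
The plan is to verify that equation \eqref{system} falls within the scope of the standard existence-and-uniqueness theory for stochastic evolution equations with Lipschitz coefficients in a Hilbert space, as developed in \cite{D-Z}, and then to invoke the corresponding theorem directly. The work has essentially already been done in the computations preceding the statement: we have shown that $A \in L(E_\alpha, E_\alpha)$ is a bounded linear operator (using the uniform bound $|\mathbf{M}_{\mathbf{i},\mathbf{j}}| \le M$ together with the finite-range condition), and that $B \in L(E_\alpha, L_{HS}(H, E_\alpha))$ is a bounded linear operator (via the explicit Hilbert--Schmidt norm estimate $\|B(x)\|_{HS}^2 \le 2N((2R)^N+1)C e^{(R+1)\alpha}|x|^2_{E_\alpha}$).

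The key steps, in order, are as follows. First, since $A$ is bounded it generates a uniformly continuous semigroup $(e^{tA})_{t\ge0}$ on $E_\alpha$, which is in particular a $C_0$-semigroup, so the abstract framework for mild solutions applies. Second, because $B$ is \emph{linear and bounded} from $E_\alpha$ into $L_{HS}(H, E_\alpha)$, it is automatically globally Lipschitz: $\|B(x) - B(y)\|_{HS} = \|B(x-y)\|_{HS} \le K|x-y|_{E_\alpha}$ with $K = (2N((2R)^N+1)C e^{(R+1)\alpha})^{1/2}$, and it satisfies the linear growth bound $\|B(x)\|_{HS} \le K(1 + |x|_{E_\alpha})$; likewise $A$ (being linear and bounded) trivially satisfies the Lipschitz and growth conditions. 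Third, I would quote the standard theorem (e.g.\ Theorem 7.4 in \cite{D-Z}, on existence, uniqueness and continuity of mild solutions under these hypotheses) to conclude that \eqref{system} has a unique mild solution $Y$ in $E_\alpha$, i.e.\ a process satisfying $Y(t) = e^{tA}x + \int_0^t e^{(t-s)A}B(Y(s))\,dW(s)$, with the uniqueness holding in the class of processes for which $\mathbb{P}(\int_0^T |Y(s)|_{E_\alpha}^2\,ds < \infty) = 1$. Fourth, the existence of a continuous modification follows from the regularity part of the same theory (a stochastic-convolution/factorization argument, or the Da Prato--Zabczyk regularity result), which applies precisely because $e^{tA}$ is a $C_0$-semigroup and $B$ has values in the Hilbert--Schmidt operators.

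The only point requiring a little care --- and the one I would flag as the main (mild) obstacle --- is making sure that the passage from the Stratonovich system \eqref{SDE system strat} to the It\^o form \eqref{Ito op form} is legitimate in infinite dimensions, i.e.\ that the formal It\^o correction terms computed coordinate-by-coordinate in \eqref{Ito1}--\eqref{Ito3} genuinely assemble into the bounded operator $A$ of \eqref{A}--\eqref{alpha}. This is handled by the explicit identity $\partial^2_{\mathbf{i},\mathbf{j}}V(x) = \mathbf{M}_{\mathbf{i},\mathbf{j}}$ together with the finite-range property, which makes every sum over $\mathbf{l}$ in \eqref{alpha} effectively finite and uniformly bounded; once \eqref{Ito op form} is established as an identity between genuine $E_\alpha$-valued processes, the rest is a direct citation. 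Thus the proof is short: assemble the two boundedness facts already proved, observe that linearity upgrades them to the global Lipschitz and linear-growth hypotheses, and apply the standard existence/uniqueness/continuity theorem for semilinear stochastic evolution equations.
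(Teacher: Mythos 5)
Your proposal is correct and follows essentially the same route as the paper: verify that $A\in L(E_\alpha,E_\alpha)$ and $B\in L(E_\alpha,L_{HS}(H,E_\alpha))$ using the bounds established just before the proposition, note that $A$ therefore generates a $C_0$-semigroup, and conclude by citing Theorem 7.4 of \cite{D-Z}. The extra remarks you add — that linearity upgrades boundedness to global Lipschitz and linear growth, and the caveat about the Stratonovich-to-It\^o passage — are sensible elaboration but not a different argument.
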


\begin{proof}
We have shown above that $A:E_\alpha\to E_\alpha$ is a bounded linear operator, so that it is the infinitesimal generator of a $C_0$-semigroup on
$E_\alpha$ ($A$ can be thought of as a bounded linear perturbation of $0$, which is trivially the generator of a $C_0$-semigroup).  We have also shown
that $B\in L(E_\alpha, L_{HS}(H,E_\alpha))$.  Hence the result follows immediately from Theorem 7.4 of \cite{D-Z}.
\end{proof}

\begin{lem}
\label{generator}
The mild solution $Y$ to \eqref{system} solves the martingale problem for the operator
\[
\mathcal{L} = \frac{1}{4}\sum_{{\bf i}\in\Z^N}\sum_{{\bf j}\in\Z^N:{\bf i}\sim{\bf j}}(\partial_{\bf i}V(x)\partial_{\bf j} - \partial_{\bf
j}V(x)\partial_{\bf i})^2.
\]
\end{lem}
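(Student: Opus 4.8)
The plan is to verify the martingale problem directly on the class of smooth cylinder functions, i.e.\ functions $\varphi(x) = g(x_{\bi_1},\dots,x_{\bi_n})$ with $n\in\mathbb{N}$, $\{\bi_1,\dots,\bi_n\}$ a finite subset of $\Z^N$ and $g\in C^2_b(\R^n)$, by showing that for each such $\varphi$ the process
\[
M^\varphi_t := \varphi(Y(t)) - \varphi(Y(0)) - \int_0^t\mathcal{L}\varphi(Y(s))\,ds, \qquad t\ge 0,
\]
is a continuous local martingale (and in fact a martingale, given the standard second-moment bounds on $Y$). First I would record two reductions. Since $\varphi$ depends only on the coordinates in $F := \{\bi_1,\dots,\bi_n\}$, the operator $\mathcal{L}$ acting on $\varphi$ only sees the finitely many fields $\bX_{\bi\bj}$ with $\bi\in F$ or $\bj\in F$, so $\mathcal{L}\varphi$ is a finite sum of terms of the form (a polynomial of degree $\le 2$ in finitely many coordinates)$\,\times\,$(a bounded partial derivative of $g$); combined with the integrability of $Y$ from Proposition \ref{existence} this makes $s\mapsto\mathcal{L}\varphi(Y(s))$ a.s.\ locally integrable, so $M^\varphi$ is well defined. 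Moreover $\varphi(Y(t))$ depends on $Y$ only through $Y_{\bi_1},\dots,Y_{\bi_n}$, each of which solves the scalar It\^o equation \eqref{Ito3} (equivalently, \eqref{system} read componentwise), so the ordinary finite-dimensional It\^o formula applies to $g(Y_{\bi_1}(t),\dots,Y_{\bi_n}(t))$.

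Applying It\^o's formula splits $\varphi(Y(t)) - \varphi(Y(0))$ into a stochastic-integral part $\int_0^t\sum_{\bi}\partial_{\bi}\varphi(Y(s))\,(B(Y(s))\,dW(s))_{\bi}$ and a finite-variation part
\[
\int_0^t\left[\sum_{\bi}\partial_{\bi}\varphi(Y(s))\,(AY(s))_{\bi} + \tfrac12\sum_{\bi,\bj}\partial^2_{\bi\bj}\varphi(Y(s))\,(B(Y(s))B(Y(s))^*)_{\bi\bj}\right]ds.
\]
The stochastic-integral part is a continuous local martingale because $B$ maps into $L_{HS}(H,E_\alpha)$ boundedly and $Y$ has the integrability of Proposition \ref{existence}, so the task reduces to checking that the integrand of the finite-variation part equals $\mathcal{L}\varphi(Y(s))$. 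For the second-order term I would compute the diffusion matrix from \eqref{B}: as in the Hilbert--Schmidt estimate preceding Proposition \ref{existence}, the basis element $f^k_{\bl}$ of $H$ is carried by $B(x)$ to $\partial_{\bl}V(x)\,e(\bl^+(k)) - \partial_{\bl^+(k)}V(x)\,e(\bl)$, so $(B(x)B(x)^*)_{\bi\bj} = \sum_{k,\bl}(\,\cdot\,)_{\bi}(\,\cdot\,)_{\bj}$ is a sum over the edges of $\Z^N$; inserting it into $\tfrac12\sum_{\bi,\bj}\partial^2_{\bi\bj}\varphi\cdot(\,\cdot\,)_{\bi\bj}$ and passing from edges to ordered neighbour pairs yields exactly $\tfrac14\sum_{\bi\sim\bj}\big[(\partial_{\bi}V)^2\partial^2_{\bj\bj}\varphi + (\partial_{\bj}V)^2\partial^2_{\bi\bi}\varphi - 2\,\partial_{\bi}V\,\partial_{\bj}V\,\partial^2_{\bi\bj}\varphi\big]$, which is the second-order part of $\tfrac14\sum_{\bi\sim\bj}\bX_{\bi\bj}^2\varphi$. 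For the first-order term I would expand $\bX_{\bi\bj}^2\varphi$ using $\partial^2_{\bi\bj}V = \M_{\bi\bj}$ and $\partial_{\bi}V(x) = \sum_{\bl}\M_{\bi\bl}x_{\bl}$, collect the coefficients multiplying $\partial_{\bi}\varphi$ and $\partial_{\bj}\varphi$, symmetrise the ordered sum over $\bi\sim\bj$ using $\M_{\bi\bj} = \M_{\bj\bi}$, and compare the coefficient of each first-order derivative $\partial_{\bf m}\varphi$ with $(Ax)_{\bf m} = \sum_{k}a^{(k)}_{\bf m}(x)$ from \eqref{A}--\eqref{alpha}; the two coincide term by term, which is not an accident since this is precisely the It\^o correction already carried out in \eqref{Ito1}--\eqref{Ito3}. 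It follows that $M^\varphi$ has no finite-variation part beyond the subtracted $\int_0^t\mathcal{L}\varphi(Y(s))\,ds$, i.e.\ it is a continuous local martingale, which is the claim.

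The routine ingredients here are the It\^o expansion and the algebraic symmetrisations. The points that need genuine care are, first, pinning down the test-function class and the pathwise integrability of $\mathcal{L}\varphi(Y(s))$, so that the infinite-dimensional statement reduces cleanly to a finite-dimensional It\^o computation, and, second, the bookkeeping of the numerical factors $\tfrac12$ and $\tfrac14$ arising from the three different index sets in play --- the edges of $\Z^N$ in $B(x)B(x)^*$, the ordered neighbour pairs $\bi\sim\bj$ in $\mathcal{L}$, and the orthonormal basis $\{f^k_{\bl}\}$ of $H$ --- together with the repeated use of $\M_{\bi\bj} = \M_{\bj\bi}$. The hard part will be the second of these, in particular matching the drift $\sum_{\bi}\partial_{\bi}\varphi\,(AY)_{\bi}$ with the first-order part of $\tfrac14\sum_{\bi\sim\bj}\bX_{\bi\bj}^2\varphi$, even though it amounts to a repackaging of the Stratonovich-to-It\^o conversion already performed above.
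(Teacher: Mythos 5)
Your proposal is correct and follows essentially the same route as the paper: apply It\^o's formula to a test function, read off the drift and the quadratic covariation structure from the It\^o form \eqref{Ito3} (equivalently from $A$ and $BB^*$ as you do), and identify the resulting differential operator with $\tfrac14\sum_{\bi\sim\bj}\bX_{\bi\bj}^2$ after symmetrising over ordered neighbour pairs. The only difference is presentational: the paper starts directly from $d[Y_\bi,Y_\bj]_t$ and leaves the test-function class implicit (``for any suitable function $f$''), whereas you pin down smooth cylinder functions and spell out the local-martingale/integrability points --- a slightly more careful exposition of the same computation.
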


\begin{proof}

By It\^o's formula, we have for any suitable function $f$ that
\begin{align*}
f(Y(t))& = f(Y(0)) + \sum_{{\bf i}\in\Z^N}\int_0^t\partial_{\bf i}f(Y(s))dY_{\bf i}(s) \\
&\qquad + \frac{1}{2}\sum_{{\bf i},{\bf j}\in\Z^N}\int_0^t\partial^2_{{\bf i}, {\bf j}}f(Y(s))d\left[Y_{\bf i}, Y_{\bf j}\right]_s.
\end{align*}
We can then calculate from \eqref{Ito3} that
\[
d[Y_{\bf i}, Y_{\bf j}]_t :=\\
\begin{cases}
-\partial_{\bf i}V(Y(t))\partial_{{\bf i}^-(k)}V(Y(t))dt   & \mathrm{if}\ {\bf j}={\bf i}^-(k),\\
\sum_{k=1}^N\left\{ \left(\partial_{{\bf i}^-(k)}V(Y(t))\right)^2 + \left(\partial_{{\bf i}^+(k)}V(Y(t))\right)^2\right\}dt & \mathrm{if}\ {\bf j}={\bf i},\\
-\partial_{\bf i}V(Y(t))\partial_{{\bf i}^+(k)}V(Y(t))dt & \mathrm{if}\  {\bf j}={\bf i}^-(k),\\
\end{cases}
\]
so that
\begin{align*}
&\sum_{{\bf i},{\bf j}\in\Z^N}\int_0^t\partial^2_{{\bf i}, {\bf j}}f(Y(s))d\left[Y_{\bf i}, Y_{\bf j}\right]_s \\
&  = \sum_{{\bf i}\in\Z^N}\int_0^t\partial^2_{\bf i}f(Y(s))\sum_{k=1}^N\left\{ \left(\partial_{{\bf i}^-(k)}V(Y(t))\right)^2 + \left(\partial_{{\bf
i}^+(k)}V(Y(t))\right)^2\right\}dt \\
& -  2\sum_{k=1}^N\sum_{{\bf i}\in\Z^N}\int_0^t\partial^2_{{\bf i},{\bf i}^-(k)}f(Y(s))\partial_{\bf i}V(Y(t))\partial_{{\bf i}^-(k)}V(Y(t))dt. \\
\end{align*}
Thus, using \eqref{Ito2}, the generator of the system is given by
\begin{align*}
\mathcal{L} &= \frac{1}{2}\sum_{{\bf i}\in\Z^N}\sum_{k=1}^N\left\{ \left(\partial_{{\bf i}^-(k)}V(x)\right)^2 + \left(\partial_{{\bf
i}^+(k)}V(x)\right)^2\right\}\partial^2_{\bf i} \\
& - \sum_{{\bf i}\in\Z^N}\sum_{k=1}^N \partial_{\bf i}V(x)\partial_{{\bf i}^-(k)}V(x)\partial^2_{{\bf i}, {\bf i}^-(k)}\\
& - \frac{1}{2}\sum_{{\bf i}\in\Z^N}\sum_{k=1}^N\left\{\left(\partial^2_{{\bf i}^-(k)}V(x)+ \partial^2_{{\bf i}^+(k)}V(x)\right)\partial_{\bf
i}V(x)\right.  \nonumber \\
& \quad \qquad - \left.\partial^2_{{\bf i}, {\bf i}^-(k)}V(x)\partial_{{\bf i}^-(k)}V(x)  - \partial^2_{{\bf i},{\bf i}^+(k)}V(x)\partial_{{\bf
i}^+(k)}V(x)\right\}\partial_{\bf i}.
\end{align*}
One can then check by direct calculation that we have
\[
\mathcal{L} = \frac{1}{4}\sum_{{\bf i}\in\Z^N}\sum_{{\bf j}\in\Z^N:{\bf i}\sim{\bf j}}\left(\partial_{\bf i}V(x)\partial_{\bf j} - \partial_{\bf
j}V(x)\partial_{\bf i}\right)^2.
\]
\end{proof}

For $n\in \{0,1,\dots\}$, let $\mathcal{U}C_b^n \equiv\mathcal{U}C_b^n(E_{\alpha}),\alpha>0$ denote the set of all functions which are uniformly continuous and bounded, together with their Fr\'echet derivatives up to order $n$.

\begin{cor}
\label{rep}
The semigroup $(P_t)_{t\geq0}$ acting on $\mathcal{U}C_b(E_{\alpha}),\alpha>0$ corresponding to the system \eqref{system} is Feller and can be represented
by the formula
\[
P_tf(\cdot) = \E f\left(Y(t, \cdot)\right), \quad t\geq0,
\]
where $Y(t, x)$ is a mild solution to the system \eqref{system} with initial condition $x\in E_{\alpha}$.
Furthermore, $(P_t)_{t\geq0}$ satisfies Kolmogorov's backward equation, and solutions of the system are strong Markov processes.
\end{cor}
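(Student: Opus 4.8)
The plan is to deduce all four assertions from the general theory of stochastic evolution equations with globally Lipschitz coefficients: by the analysis preceding Proposition \ref{existence}, both $A$ and $B(\cdot)$ are \emph{bounded linear}, hence globally Lipschitz, maps on $E_\alpha$, so the results of Chapter 9 of \cite{D-Z} apply directly.

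First I would establish the Markov property and the representation. Since \eqref{system} is autonomous and $W$ has independent increments, uniqueness of the mild solution (Proposition \ref{existence}) yields the usual flow identity, whence, setting $P_tf(x):=\E f(Y(t,x))$,
\[
\E\bigl[f(Y(t+s,x))\mid\mathcal{F}_s\bigr]=\bigl(P_tf\bigr)\bigl(Y(s,x)\bigr),\qquad f\in\mathcal{U}C_b(E_\alpha),
\]
which is the Markov property (see Chapter 9 of \cite{D-Z}). The representation formula is then just the definition of $P_t$, and the existence of a continuous modification of $Y(\cdot,x)$ (Proposition \ref{existence}) together with dominated convergence gives $P_tf(x)\to f(x)$ as $t\downarrow0$.

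Next, the Feller property, i.e.\ $P_t:\mathcal{U}C_b(E_\alpha)\to\mathcal{U}C_b(E_\alpha)$. Boundedness is immediate, $\|P_tf\|_\infty\le\|f\|_\infty$. For the uniform continuity one uses the a priori estimate, obtained by applying Gronwall's lemma to the mild formulation together with the operator-norm bounds on $A$ and $B$,
\[
\E\,|Y(t,x)-Y(t,y)|_{E_\alpha}^2\ \le\ C(T)\,|x-y|_{E_\alpha}^2,\qquad 0\le t\le T .
\]
Given $\varepsilon>0$, choose $\delta$ from the uniform continuity of $f$ and split $\E|f(Y(t,x))-f(Y(t,y))|$ over the event $\{|Y(t,x)-Y(t,y)|_{E_\alpha}\le\delta\}$ and its complement, estimating the latter by Chebyshev's inequality and the displayed bound; this shows that $|P_tf(x)-P_tf(y)|$ is small uniformly in $x,y$ with $|x-y|_{E_\alpha}$ small, i.e.\ $P_tf\in\mathcal{U}C_b(E_\alpha)$.

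Finally, Kolmogorov's backward equation follows by combining Lemma \ref{generator} with the Markov property: for $f\in\mathcal{U}C_b^2(E_\alpha)$ the martingale problem gives $\E f(Y(t,x))-f(x)=\int_0^t P_s(\mathcal{L}f)(x)\,ds$, hence $\partial_t P_tf=\mathcal{L}P_tf$ with $P_0f=f$ (see again Chapter 9 of \cite{D-Z}); and the strong Markov property follows from the Feller property, path-continuity and pathwise uniqueness by the standard approximation of an arbitrary stopping time by discrete-valued ones. I expect the only genuinely technical point to be the Feller continuity estimate, together with the fact that the natural domain here is $\mathcal{U}C_b$ rather than $C_b$; the remainder is bookkeeping on top of the cited general theory.
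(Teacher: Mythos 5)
Your proposal follows exactly the same route as the paper, which simply invokes Theorems 9.14 and 9.16 of Da Prato--Zabczyk for globally Lipschitz coefficients; you have just unpacked the content of those theorems (Markov property from uniqueness and the flow identity, Feller from Gronwall plus Chebyshev, backward equation, strong Markov from Feller plus path continuity). One small imprecision: the martingale problem of Lemma \ref{generator} yields directly the \emph{forward} identity $P_t f - f = \int_0^t P_s(\mathcal{L}f)\,ds$, i.e.\ $\partial_t P_t f = P_t\mathcal{L}f$; the backward form $\partial_t P_t f = \mathcal{L}P_t f$ requires the additional fact that $P_t$ maps $\mathcal{U}C_b^2$ into itself, which D--Z obtain by differentiating the solution flow with respect to the initial condition. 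Since you cite D--Z for this point anyway, the argument is sound, and it is in substance identical to the paper's one-line proof.
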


\begin{proof}
The result follows immediately from Theorems 9.14 and 9.16 of \cite{D-Z}.
\end{proof}

\begin{example}
\label{ex:GeneratorMainExample}
Suppose that, for all $i\in\Z^N$,
\[
{\bf M}_{{\bf i},{\bf i}} = 1, \quad {\bf M}_{{\bf i},{\bf j}} =0\quad \mathrm{if}\quad {\bf i}\neq {\bf j}.
\]
Then $\partial_{\bf i}V(x) = x_{\bf i}$, and the system \eqref{Ito3} becomes
\[
dY_{\bf i}(t)  = -\sum_{k=1}^N Y_{\bf i}(t)dt + \sum_{k=1}^N \left(Y_{{\bf i}^-(k)}(t)dW^{k}_{{\bf i}^-(k)}(t) - Y_{{\bf i}^+(k)}(t)dW^{k}_{\bf
i}(t)\right)
\]
for all $i\in\Z^N$, which has generator
\begin{equation}\label{eqn:GeneratorMainExample}
\mathcal{L} = \frac{1}{4}\sum_{{\bf i}\in\Z^N}\sum_{{\bf j}\in\Z^N:{\bf i}\sim{\bf j}}\left(x_{\bf i}\partial_{\bf j} - x_{\bf j}\partial_{\bf
i}\right)^2.
\end{equation}
Very closely related generators are considered in the physical models for heat conduction described in \cite{B2007, B2008, B-O, F-N-O} and \cite{G-K-R}. A related model is also considered \cite{Carmona}.  However, there are some major differences between the system considered there and the one we investigate. Indeed, in \cite{Carmona} H\"ormander's condition is assumed to be satisfied, and the system is finite dimensional.  Moreover, it is shown that there is a unique invariant measure for such a system, which as we will see, is not the case in our set-up.

\end{example}

\begin{rem}
Let $(r_{\bf i,j},\theta_{\bf i,j})$ be polar coordinates in the plane  $(x_{\bf i},x_{\bf j})$. Then
\[
\frac{\partial}{\partial \theta_{\bf i,j}}=x_{\bf i}\partial_{\bf j} - x_{\bf j}\partial_{\bf
i}.
\]
Therefore in Example \ref{ex:GeneratorMainExample}
\[
\mathcal{L} = \frac{1}{4}\sum_{{\bf i}\in\Z^N}\sum_{{\bf j}\in\Z^N:{\bf i}\sim{\bf j}}\frac{\partial^2}{\partial \theta_{\bf i,j}^2}.
\]
Note that the operator $-\frac{\partial^2}{\partial \theta_{\bf i,j}^2}$ is the Hamiltonian for the rigid rotor on the plane. Thus, the operator $-\mathcal{L}$ is the Hamiltonian of a chain of coupled rigid rotors.
\end{rem}
\section{Invariant measure}\label{Section.4}
\label{Invariant measure}

Suppose now that $(Y(t))_{t\geq0}$ is the unique mild solution to the evolution
equation \eqref{system} in the Hilbert space $E_\alpha$  i.e.
\[
dY(t) = AY(t)dt + B(Y(t))dW(t)
\]
where $A, B$ are given by \eqref{A} and \eqref{B} respectively, and $(W(t))_{t\geq0}$ is a cylindrical Wiener process in $H$.  Let $(P_t)_{t\geq0}$ be the corresponding semigroup, defined as above.

For ${\bf i}, {\bf j}\in\Z^N$, define
\[
\bX_{{\bf i},{\bf j}} = \partial_{\bf i}V(x)\partial_{\bf j} - \partial_{\bf j}V(x)\partial_{\bf i}
\]
so that by Lemma \ref{generator},
\[
\mathcal{L} = \frac{1}{4}\sum_{{\bf i}\in\Z^N}\sum_{{\bf j}\in\Z^N:{\bf i}\sim{\bf j}}\bX_{{\bf i},{\bf j}}^2
\]
is the generator of our system.
We will need the following Lemma:

\begin{lem}
\label{antisym}
\[
\mu_{r\bf G}\left(f\bX_{{\bf i},{\bf j}}g\right) = - \mu_{r\bf G}\left(g\bX_{{\bf i},{\bf j}}f\right)
\]
for all $f,g\in \mathcal{U}C_b^2(E_{\alpha})$, ${\bf i},{\bf j}\in\Z^N$ and $r>0$.
\end{lem}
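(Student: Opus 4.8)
The plan is to use the fact that $\bX_{\bi,\bj}$ is, formally, a divergence-free vector field that annihilates $V$, so that it preserves the Gaussian measure $\mu_{r\bf G}$ (whose formal density is $e^{-V/r}$); the asserted identity is then nothing but $\mu_{r\bf G}\big(\bX_{\bi,\bj}(fg)\big)=0$ rewritten through the Leibniz rule. The two structural facts are immediate: since $\partial_{\bf k}V(x)=\sum_{\bl\in\Z^N}{\bf M}_{{\bf k},\bl}x_{\bl}$ we have $\partial^2_{\bi,\bj}V={\bf M}_{\bi,\bj}$, and ${\bf M}={\bf G}^{-1}$ is symmetric (being the inverse of a covariance operator), so $\partial^2_{\bi,\bj}V=\partial^2_{\bj,\bi}V$, i.e. $\bX_{\bi,\bj}$ is divergence-free; and $\bX_{\bi,\bj}V=(\partial_\bi V)(\partial_\bj V)-(\partial_\bj V)(\partial_\bi V)=0$. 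If $\bi=\bj$ then $\bX_{\bi,\bi}=0$ and there is nothing to prove, so I assume $\bi\neq\bj$ from now on.

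To make this rigorous on the infinite product space $E_\alpha$ I will invoke the integration-by-parts formula for the Gaussian measure $\mu_{r\bf G}$: for every $\Psi\in\mathcal{U}C_b^1(E_\alpha)$ and every ${\bf k}\in\Z^N$,
\[
\mu_{r\bf G}\big(\partial_{\bf k}\Psi\big)=\tfrac1r\,\mu_{r\bf G}\big(\Psi\,\partial_{\bf k}V\big).
\]
This is the Cameron--Martin identity applied in the coordinate direction $e({\bf k})$: that direction lies in the Cameron--Martin space of $\mu_{r\bf G}$ (indeed in the range of its covariance) precisely because ${\bf M}={\bf G}^{-1}$ has finite range, whence $\sum_{\bl\in\Z^N}{\bf M}_{{\bf k},\bl}^2\,e^{\alpha|\bl|_1}<\infty$; and a short second-moment computation, $\mu_{r\bf G}\big(x_{\bf m}\cdot\tfrac1r\partial_{\bf k}V\big)=\tfrac1r\sum_{\bl\in\Z^N}{\bf M}_{{\bf k},\bl}(r{\bf G})_{\bl,{\bf m}}=\delta_{{\bf k},{\bf m}}$, identifies $\tfrac1r\partial_{\bf k}V$ as the first-Wiener-chaos variable conjugate to $e({\bf k})$. (Equivalently one first proves the identity for cylinder functions, either from the finite-dimensional Gaussian integration-by-parts formula together with the DLR description of $\mu_{r\bf G}$ — the conditional density of the coordinates in a finite box $\Lambda\ni{\bf k}$ given the rest is proportional to $\exp(-\tfrac1r(\cdots))$ with $\partial_{\bf k}$-log-derivative $-\tfrac1r\partial_{\bf k}V$ — and then passes to a general $\Psi\in\mathcal{U}C_b^1$ by approximating $\Psi$ by $\Psi(\pi_{\Lambda_n}\cdot)$, $\Lambda_n\uparrow\Z^N$, using $|\pi_{\Lambda_n}x-x|_{E_\alpha}\to 0$ for $\mu_{r\bf G}$-a.e.\ $x$, that $\partial_{\bf k}V\in L^2(\mu_{r\bf G})$, and dominated convergence.)

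Granting the integration-by-parts formula, the lemma follows in a few lines. Since $fg\in\mathcal{U}C_b^2(E_\alpha)$, applying the formula with $\Psi=\partial_\bj(fg)$, and then again with $\bi$ and $\bj$ interchanged, gives
\[
\mu_{r\bf G}\big(\partial_\bi V\,\partial_\bj(fg)\big)=r\,\mu_{r\bf G}\big(\partial^2_{\bi,\bj}(fg)\big)=r\,\mu_{r\bf G}\big(\partial^2_{\bj,\bi}(fg)\big)=\mu_{r\bf G}\big(\partial_\bj V\,\partial_\bi(fg)\big),
\]
using the equality of the mixed second partials of $fg$. Hence $\mu_{r\bf G}\big(\bX_{\bi,\bj}(fg)\big)=\mu_{r\bf G}\big(\partial_\bi V\,\partial_\bj(fg)-\partial_\bj V\,\partial_\bi(fg)\big)=0$, and since the Leibniz rule gives $\bX_{\bi,\bj}(fg)=(\bX_{\bi,\bj}f)\,g+f\,(\bX_{\bi,\bj}g)$, we conclude $\mu_{r\bf G}\big(f\,\bX_{\bi,\bj}g\big)=-\mu_{r\bf G}\big(g\,\bX_{\bi,\bj}f\big)$, which is the claim.

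The only step demanding real care is the infinite-dimensional integration by parts: one must verify that $e({\bf k})$ genuinely belongs to the Cameron--Martin space of $\mu_{r\bf G}$ (this is exactly where the finite-range hypothesis on ${\bf G}^{-1}$ is used), that $\partial_{\bf k}V$ — although unbounded — is square-integrable, so its products with the bounded functions $f,g$ and their first derivatives are $\mu_{r\bf G}$-integrable, and that the cylinder approximation converges together with its first-order derivatives. Once that is in place, the remainder is routine manipulation with the Leibniz rule and the symmetry ${\bf M}_{\bi,\bj}={\bf M}_{\bj,\bi}$.
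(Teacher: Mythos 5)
Your proof is correct, but it takes a different route from the paper's. Where you invoke the Cameron--Martin integration-by-parts formula directly on the infinite-dimensional Hilbert space $E_\alpha$ (identifying $\tfrac{1}{r}\partial_{\bf k}V$ as the Paley--Wiener functional conjugate to $e({\bf k})$, and checking $e({\bf k})$ lies in the Cameron--Martin space via the finite range of $\M=\G^{-1}$), the paper instead works through the DLR specification: it fixes a finite box $\Lambda\supset\{\bi,\bj\}$, writes out the conditional density $\E_\Lambda^\omega$ of $\mu_{r\G}$ explicitly as $\propto\exp(-\tfrac{1}{2r}\sum_{{\bf k}\in\Lambda}V_{\bf k})$, integrates by parts \emph{finite-dimensionally} inside $\E_\Lambda^\omega(f\,\bX_{\bi,\bj}g)$, observes that the two boundary terms $\E_\Lambda^\omega(fg[\partial_\bi\partial_\bj V-\partial_\bj\partial_\bi V])$ and $\tfrac1r\E_\Lambda^\omega(fg[\partial_\bi V\partial_\bj V-\partial_\bj V\partial_\bi V])$ vanish by symmetry of $\M$ and by cancellation, and then passes to $\mu_{r\G}$ via the tower property $\mu_{r\G}\E_\Lambda^\cdot=\mu_{r\G}$. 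Both arguments are at bottom ``Gaussian integration by parts''; the paper's finite-box route requires no Cameron--Martin machinery and avoids a cylinder approximation entirely, while your global route is cleaner once the formula $\mu_{r\G}(\partial_{\bf k}\Psi)=\tfrac1r\mu_{r\G}(\Psi\,\partial_{\bf k}V)$ is in hand, and makes transparent that the real content is the divergence-free property of $\bX_{\bi,\bj}$ (from $\M_{\bi\bj}=\M_{\bj\bi}$) together with $\bX_{\bi,\bj}V=0$. Your parenthetical remark about cylinder functions and the DLR conditional density is in fact essentially the paper's own proof, modulo that the paper applies the finite-dimensional integration by parts directly to $f\,\bX_{\bi,\bj}g$ rather than first establishing the scalar identity for $\partial_{\bf k}\Psi$ and then specializing.

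One small point to tighten: showing $e({\bf k})$ lies in the Cameron--Martin space really requires that $e({\bf k})\in\mathrm{range}(Q)\subset\mathrm{range}(Q^{1/2})$ where $Q$ is the covariance operator of $\mu_{r\G}$ \emph{as a Gaussian measure on the weighted space $E_\alpha$}; the cleaner observation is simply that $\G^{-1}e({\bf k})=\M_{\cdot,{\bf k}}$ has finite support (by finite range), hence lies in $E_\alpha$, so $e({\bf k})=\G(\G^{-1}e({\bf k}))$ is in the range of $\G$ and a fortiori of $\G^{1/2}$, modulo the diagonal weight $e^{-\alpha|\cdot|_1}$ relating $\G$ to $Q$. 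The condition $\sum_\bl\M_{{\bf k},\bl}^2e^{\alpha|\bl|_1}<\infty$ you cite is implied by finite range but is not by itself the right criterion for membership in the Cameron--Martin space.
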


\begin{proof}
For finite subsets $\Lambda\subset\Z^N$ and $\omega\in\R^{\Z^N}$, denote by $\E_\Lambda^\omega$ the conditional measure of $\mu_{r\bf G}$, given the
coordinates outside $\Lambda$ coincide with those of $\omega$.  Then we have that
\[
\E_\Lambda^\omega(f) = \int_{\R^\Lambda}f(x_\Lambda\cdot\omega_{\Lambda^c})\frac{e^{-\frac{1}{2r}\suml_{{\bf k}\in\Lambda}V_{\bf
k}(x_\Lambda\cdot\omega_{\Lambda^c})}}{Z_\Lambda^\omega}dx_\Lambda
\]
where $x_\Lambda\cdot\omega_{\Lambda^c}$ is the element of $\R^{\Z^N}$ given by
\[
(x_\Lambda\cdot\omega_{\Lambda^c})_{\bf i}=
\begin{cases}
x_{\bf i}  & \mathrm{if}\ {\bf i}\in\Lambda, \\
\omega_{\bf i} & \mathrm{if}\ {\bf i}\in\Lambda^c,
\end{cases}
\]
and $Z_\Lambda^\omega$ is the normalisation constant.
Now fix ${\bf i},{\bf j}\in\Z^N$ and suppose that $\Lambda$ is such that $\{{\bf i},{\bf j}\}\subset\Lambda$.  Then for  $f,g\in
\mathcal{U}C_b^2(E_{\alpha})$
\begin{align*}
\E_\Lambda^\omega\left(f\bX_{{\bf i},{\bf j}}g\right) &= \int_{\R^\Lambda}f(x_\Lambda\cdot\omega_{\Lambda^c})
\bX_{\bf i,j}
g(x_\Lambda\cdot\omega_{\Lambda^c})\frac{e^{-\frac{1}{2r}\suml_{{\bf k}\in\Lambda}V_{\bf k}(x_\Lambda\cdot\omega_{\Lambda^c})}}{Z_\Lambda^\omega}dx_\Lambda\\
&= - \E_\Lambda^\omega\left(g \bX_{{\bf i},{\bf j}}f\right) \\
&\qquad +\E_\Lambda^\omega\left(fg\left[\partial_{\bf i}\partial_{\bf j}V(x) - \partial_{\bf j}\partial_{\bf i}V(x)\right]\right)\\
&\qquad +\frac{1}{r}\E_\Lambda^\omega\left(fg\left[\partial_{\bf i}V(x)\partial_{\bf j}V(x) - \partial_{\bf j}V(x)\partial_{\bf i}V(x)\right]\right)
=  - \E_\Lambda^\omega\left(g\bX_{{\bf i},{\bf j}}f\right)
\end{align*}
by integration by parts.  Thus we have that
\[
\mu_{r\bf G}\left(f\bX_{{\bf i},{\bf j}}g\right) = \mu_{r\bf G}\E_\Lambda^\cdot\left(f\bX_{{\bf i},{\bf j}}g\right) = -\mu_{r\bf G}\E_\Lambda^\cdot\left(g
\bX_{{\bf i},{\bf j}}
f\right)
= -\mu_{r\bf G}\left(g\bX_{{\bf i},{\bf j}}f\right).
\]

\end{proof}

The following result shows that for $r>0$, $\mu_{r\bf G}$  is reversible for the system \eqref{Ito op form}.

\begin{thm}
\label{symmetric measure}
For all $f,g\in \mathcal{U}C_b^2(E_{\alpha})$ and $r>0$, we have
\begin{equation}
\label{reversible}
\mu_{r\bf G}\left(fP_tg\right) = \mu_{r\bf G}\left(gP_tf\right).
\end{equation}
\end{thm}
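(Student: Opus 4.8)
The plan is to derive \eqref{reversible} from symmetry of $\mathcal{L}$ with respect to $\mu_{r\bf G}$ together with the backward Kolmogorov equation, via the usual interpolation-in-time trick.

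\emph{Symmetry of the generator.} First I would show that for $f,g\in\mathcal{U}C_b^2(E_\alpha)$
\[
\mu_{r\bf G}(f\,\mathcal{L}g)=-\frac14\sum_{{\bf i}\in\Z^N}\ \sum_{{\bf j}:{\bf i}\sim{\bf j}}\mu_{r\bf G}\bigl(\bX_{{\bf i},{\bf j}}f\cdot\bX_{{\bf i},{\bf j}}g\bigr)=\mu_{r\bf G}(g\,\mathcal{L}f),
\]
the middle term being manifestly symmetric in $f$ and $g$. For a single pair ${\bf i}\sim{\bf j}$ this is exactly the integration by parts of Lemma~\ref{antisym}, now applied with $g$ replaced by $\bX_{{\bf i},{\bf j}}g$: working under the conditional measure $\E_\Lambda^\omega$ with $\{{\bf i},{\bf j}\}\subset\Lambda$ and integrating by parts in $x_{\bf i},x_{\bf j}$, the two correction terms are multiples of $\partial_{\bf i}\partial_{\bf j}V-\partial_{\bf j}\partial_{\bf i}V={\bf M}_{{\bf i},{\bf j}}-{\bf M}_{{\bf j},{\bf i}}=0$ and of $\partial_{\bf i}V\,\partial_{\bf j}V-\partial_{\bf j}V\,\partial_{\bf i}V=0$, while the boundary terms vanish because $f$ is bounded, $\bX_{{\bf i},{\bf j}}g$ has at most linear growth (the coefficients $\partial_{\bf i}V$ are linear in $x$), and the Gaussian density decays faster than any polynomial; averaging over $\omega$ yields $\mu_{r\bf G}(f\,\bX_{{\bf i},{\bf j}}^2 g)=-\mu_{r\bf G}(\bX_{{\bf i},{\bf j}}f\cdot\bX_{{\bf i},{\bf j}}g)$.

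\emph{The interpolation argument.} Since $A$ and $B$ in \eqref{system} are \emph{linear}, the flow $x\mapsto Y(t,x)$ is linear, so $D^kP_tf(x)[h_1,\dots,h_k]=\E\,D^kf(Y(t,x))[Y(t,h_1),\dots,Y(t,h_k)]$, and the linear moment bound $\E|Y(t,h)|^2_{E_\alpha}\le C_t|h|^2_{E_\alpha}$ shows that $P_t$ maps $\mathcal{U}C_b^2(E_\alpha)$ into itself. By Lemma~\ref{generator} the solution solves the martingale problem for $\mathcal{L}$, hence $s\mapsto P_sh$ is differentiable with $\frac{d}{ds}P_sh=\mathcal{L}P_sh$ for $h\in\mathcal{U}C_b^2(E_\alpha)$. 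Fixing $t>0$ and setting $\psi(s):=\mu_{r\bf G}(P_sf\cdot P_{t-s}g)$ for $s\in[0,t]$, differentiation under the integral together with the symmetry step applied to $P_sf,P_{t-s}g\in\mathcal{U}C_b^2(E_\alpha)$ gives $\psi'(s)=\mu_{r\bf G}(\mathcal{L}P_sf\cdot P_{t-s}g)-\mu_{r\bf G}(P_sf\cdot\mathcal{L}P_{t-s}g)=0$, so $\psi(0)=\psi(t)$, i.e. $\mu_{r\bf G}(f\,P_tg)=\mu_{r\bf G}(P_tf\cdot g)=\mu_{r\bf G}(g\,P_tf)$, which is \eqref{reversible}.

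\emph{Main obstacle.} The delicate point is rigour in infinitely many dimensions: for $h\in\mathcal{U}C_b^2(E_\alpha)$ the operator $\mathcal{L}h=\frac14\sum_{{\bf i}\sim{\bf j}}\bX_{{\bf i},{\bf j}}^2h$ is an infinite series, and one must control its convergence, its membership in $L^1(\mu_{r\bf G})$, and the interchanges of this series with $\mu_{r\bf G}$ and with $\frac{d}{ds}$. These follow from the Gaussian structure of $\mu_{r\bf G}$ (all polynomial moments finite, $\sup_{\bf i}\|\partial_{\bf i}V\|_{L^2(\mu_{r\bf G})}<\infty$) together with the $E_\alpha$-regularity of the Fréchet derivatives, which forces $\sum_{\bf i}(\partial_{\bf i}h(x))^2e^{\alpha|{\bf i}|_1}<\infty$ and hence, by Cauchy--Schwarz against $\sum_{\bf i}e^{-\alpha|{\bf i}|_1}<\infty$, absolute convergence of $\sum_{{\bf i}\sim{\bf j}}\mu_{r\bf G}(|\bX_{{\bf i},{\bf j}}h_1\cdot\bX_{{\bf i},{\bf j}}h_2|)$. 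Alternatively one can sidestep the domain questions by first proving \eqref{reversible} for cylindrical $f,g$ — applying the argument above to the finite-range truncations $\mathcal{L}_n=\frac14\sum_{{\bf i}\in\Lambda_n}\sum_{{\bf j}\sim{\bf i}}\bX_{{\bf i},{\bf j}}^2$, each of which is reversible for $\mu_{r\bf G}$ by conditioning on the coordinates outside a suitable finite set and applying Lemma~\ref{antisym}, and letting $\Lambda_n\uparrow\Z^N$ so that the corresponding semigroups converge — and then taking $g\equiv1$ to obtain invariance of $\mu_{r\bf G}$, deduce that $P_t$ is an $L^2(\mu_{r\bf G})$-contraction, and finally extend \eqref{reversible} to all of $\mathcal{U}C_b^2(E_\alpha)$ by density of cylindrical functions in $L^2(\mu_{r\bf G})$ and continuity of both sides.
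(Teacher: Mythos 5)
Your route is genuinely different from the paper's, and the ``main obstacle'' you flag is not a minor technicality but a real gap that your argument leaves open. The interpolation identity $\psi'(s)=\mu_{r\bf G}(\mathcal{L}P_sf\cdot P_{t-s}g)-\mu_{r\bf G}(P_sf\cdot \mathcal{L}P_{t-s}g)=0$ requires (a) that $P_s$ maps $\mathcal{U}C_b^2(E_\alpha)$ into itself, (b) that $\mathcal{L}h$ makes sense as an element of $L^1(\mu_{r\bf G})$ for $h\in\mathcal{U}C_b^2(E_\alpha)$, and (c) that the symmetry relation $\mu_{r\bf G}(h_1\mathcal{L}h_2)=\mu_{r\bf G}(h_2\mathcal{L}h_1)$ has been established on a class containing $P_sf$ and $P_{t-s}g$. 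For (a), the directional moment bound $\E|Y(t,h)|^2_{E_\alpha}\le C_t|h|^2_{E_\alpha}$ does not by itself produce a bounded, uniformly continuous bilinear form $D^2P_tf(x)$ on $E_\alpha\times E_\alpha$; you would need an operator-norm (uniform-in-direction) estimate on the random linear flow, which in the weighted space $E_\alpha$ is not immediate. For (b)--(c), your Cauchy--Schwarz sketch controls the first-order coefficients, but the second-order part of $\bX_{{\bf i},{\bf j}}^2h$ (terms like $(\partial_{\bf i}V)^2\partial_{\bf j}^2h$) must also be dominated in $L^1(\mu_{r\bf G})$ uniformly over the infinite sum before the finite-volume integrations by parts under $\E_\Lambda^\omega$ can be summed over all ${\bf i}\sim{\bf j}$ and justified by dominated convergence.

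The paper's proof is designed precisely to sidestep these issues. It first reduces to cylinder functions, then decomposes $\mathcal{L}=\tfrac12\sum_{k,\mathbf{m}}\mathcal{L}_{\mathbf{m}}^{(k)}$ into pieces each of which is a sum of \emph{pairwise commuting} operators $\bX_{{\bf i},{\bf i}^+(k)}^2$ (with ${\bf i}$ on a sublattice of spacing $R+2$), so that each $S_t^{(k,\mathbf{m})}=e^{t\mathcal{L}_{\mathbf{m}}^{(k)}}$ is a product semigroup which, acting on a cylinder function, reduces to a \emph{finite} product of two-variable rotation semigroups, each symmetric for $\mu_{r\bf G}$ by Lemma~\ref{antisym}. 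The finitely many pieces $S_t^{(k,\mathbf{m})}$ are then glued together by the Trotter-type formula of Theorem~\ref{zabczyk}, and symmetry of $P_t$ is read off from the limit via dominated convergence. Your closing alternative (truncate to finite $\Lambda_n$ and pass to the limit) is closer in spirit to this, but the semigroup convergence you invoke is exactly the hard step, and the commuting-block decomposition plus Theorem~\ref{zabczyk} is what makes it tractable. Note also that the generator-symmetry statement you want does appear in the paper, but only afterwards and on an unnamed dense domain (Propositions~\ref{commutativity1}--\ref{symmetry2}); the paper deliberately proves Theorem~\ref{symmetric measure} first, by the product route, without ever needing to make $\mathcal{L}$ act on all of $\mathcal{U}C_b^2(E_\alpha)$.
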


\begin{proof}
It is enough to show that \eqref{reversible} holds for $f,g\in \mathcal{U}C_b^2(E_{\alpha})$ depending only on a finite number of coordinates.
Indeed, we can find sequences of cylindrical functions
$\{f_n\}_{n=1}^{\infty},\{g_n\}_{n=1}^{\infty}\subset\mathcal{U}C_b^2(E_{\alpha})$ which approximate general $f, g\in\mathcal{U}C_b^2(E_{\alpha})$.
In view of this, suppose $f(x) =
f\left(\{x_{\bf i}\}_{|{\bf i}|_1\leq n}\right)$ and $g(x) = g\left(\{x_{\bf i}\}_{|{\bf i}|_1\leq n}\right)$ for some $n$.
Note that the generator $\mathcal{L}$ can be rewritten as
\begin{align*}
\mathcal{L} 
&= \frac{1}{2}\sum_{k=1}^N\sum_{{\bf i}\in\Z^N} \bX_{{\bf i}, {\bf i}^+(k)}^2.
\end{align*}

We decompose this operator further.  Indeed, we can write

\begin{align*}
\mathcal{L} = \frac{1}{2}\sum_{k=1}^N\sum_{{\bf m}\in\{0, \dots, R+1\}^N} \left(\sum_{{\bf i}\in\otimes_{\sigma=1}^N\left((R+2)\Z +
m_\sigma\right)} \bX_{{\bf i},{\bf i}^+(k)}^2\right)
\end{align*}
and define for ${\bf m} = (m_1, \dots, m_N)\in\{0,...,R+1\}^N$, $k\in\{1, \dots, N\}$
\[
\mathcal{L}_{\bf m}^{(k)} :=  \sum_{{\bf i}\in\otimes_{\sigma=1}^N\left((R+2)\Z + m_\sigma\right)}\bX_{{\bf i},{\bf i}^+(k)}^2
\]
so that
\[
\mathcal{L} = \frac{1}{2}\sum_{k=1}^N\sum_{{\bf m}\in\{0, \dots, R+1\}^N}\mathcal{L}_{\bf m}^{(k)}.
\]
Note that by construction, for fixed $k\in\{1, \dots, N\}$ and ${\bf m}\in\{0,...,R+1\}^N$, we have for any ${\bf i},{\bf j} \in
\otimes_{\sigma=1}^N\left((R+2)\Z + m_\sigma\right)$ that
\[
\left[\bX_{{\bf i}, {\bf i}^+(k)}, \bX_{{\bf j}, {\bf j}^+(k)}\right] =0.
\]
For ${\bf i}={\bf j}$ this is clear.  If ${\bf i}\neq {\bf j}$, we have
\[
\left[\bX_{{\bf i}, {\bf i}^+(k)}, \bX_{{\bf j}, {\bf j}^+(k)}\right] = \left[ \partial_{\bf i}V(x)\partial_{{\bf i}^+(k)} - \partial_{{\bf
i}^+(k)}V(x)\partial_{\bf i}, \partial_{\bf j}V(x)\partial_{{\bf j}^+(k)} - \partial_{{\bf j}^+(k)}V(x)\partial_{\bf j}\right]
\]
and
\[
\partial_{{\bf i}^+(k)}\partial_{\bf j}V(x) =0.
\]
Indeed, $\partial_{\bf j}V(x)$ depends only on coordinates ${\bf l}$ such that $|{\bf j}-{\bf l}|_1\leq R$, and for all such ${\bf l}$
\begin{align*}
|{\bf i}^+(k) - {\bf l}|_1 &\geq |{\bf i}^+(k)-{\bf j}|_1 - |{\bf j} - {\bf l}|_1\\
&\geq R+1 - R\\
&=1
\end{align*}
so that $\partial_{\bf j}V(x)$ does not depend on coordinate ${\bf i}^+(k)$ for any $k$.  Similarly
\[
\partial_{{\bf i}^+(k)}\partial_{{\bf j}^+(k)}V(x) = \partial_{\bf i}\partial_{\bf j}V(x) = \partial_{\bf i}\partial_{{\bf j}^+(k)}V(x) =0,
\]
which proves the claim.  Thus for any $k\in\{1, \dots, N\}$ and ${\bf m}\in\{0,...,R+1\}^N$,
\[
S_t^{(k, {\bf m})} :=e^{t\mathcal{L}^{(k)}_{\bf m}}= \prod_{{\bf i}\in\otimes_{\sigma=1}^N\left((R+2)\Z + m_\sigma\right)}
e^{t\bX^2_{{\bf i}, {\bf i}^+(k)}}
\]
i.e. $S_t^{(k,{\bf m})}$ is a product semigroup.

We now claim that
\begin{equation}
\label{claim}
\mu_{r\bf G}\left(fS_t^{(k,{\bf m})}g\right) = \mu_{r\bf G}\left(gS_t^{(k,{\bf m})}f\right)
\end{equation}
for $k\in\{1, \dots, N\}$ and ${\bf m}\in\{0,...,R+1\}^N$.  Let $k=1$ and ${\bf m} = {\bf 0}$ (the other cases are similar).  Since $g$ depends on coordinates ${\bf i}$ such that $|{\bf i}|_1\leq n$, we have
\[
S^{(1,{\bf 0})}_tg(x) = \prod_{\substack{{\bf i}\in\otimes_{\sigma=1}^N\left((R+2)\Z + m_\sigma\right)\\ |{\bf i}|_1\leq n+R+2}}e^{t\bX^2_{{\bf i},
{\bf
i}^+(k)}}g(x),
\]
which is a finite product.  As a result of Lemma \ref{antisym}, we then have that
\begin{align*}
\mu_{r\bf G}\left(fS^{(1,{\bf 0})}_tg\right) &= \mu_{r\bf G}\left(f\prod_{\substack{{\bf i}\in\otimes_{\sigma=1}^N\left((R+2)\Z + m_\sigma\right)\\
|{\bf i}|_1\leq n+R+2}}e^{t\bX^2_{{\bf i}, {\bf i}^+(k)}}g\right) \\
&=\mu_{r\bf G}\left(g\prod_{\substack{{\bf i}\in\otimes_{\sigma=1}^N\left((R+2)\Z + m_\sigma\right)\\ |{\bf i}|_1\leq n+R+2}}e^{t\bX^2_{{\bf i}, {\bf
i}^+(k)}}f\right) \\
&= \mu_{r\bf G}\left(gS^{(1,{\bf 0})}_tf\right)
\end{align*}
as claimed.

To finish the proof, we will need to use the following version of the Trotter product formula (see \cite{T-Z}):

\begin{thm}
\label{zabczyk}
Let $\mathcal{H}$ and $\mathcal{E}$ be two Hilbert spaces, and $F_i\in Lip(\mathcal{E},\mathcal{E}), U_i\in Lip(\mathcal{E},
L_{HS}(\mathcal{H},\mathcal{E}))$
for $i=1,2,3$.  Let $(W(t))_{t\geq0}$ be a cylindrical Wiener process in $\mathcal{H}$.  Consider the SDEs, indexed by $i=1,2,3$, given by
\[
dY_i(t) = F_i(Y_i(t))dt + U_i(Y_i(t))dW(t), \qquad Y_i(0) = x\in \mathcal{E},
\]
and let $(\mathcal{P}^i_t)_{t\geq0}$ be the corresponding semigroups on $\mathcal{U}C_b(\mathcal{E})$.  Assume that
\[
F_3 = F_1 + F_2, \qquad U_3U_3^* = U_1U_1^* + U_2U_2^*,
\]
and that the first and second Fr\'echet derivatives of $F_i$ and $U_i$ are uniformly continuous and bounded on bounded subsets of $\mathcal{E}$.
Then
\[
\lim_{n\to\infty}\left(\mathcal{P}_{\frac{t}{n}}^1\mathcal{P}_{\frac{t}{n}}^2\right)^nf(x) = \mathcal{P}_{t}^3f(x)
\]
for all $f\in \mathbb{K}$, where $\mathbb{K}$ is the closure of $\mathcal{U}C_b^2(\mathcal{E})$ in $\mathcal{U}C_b(\mathcal{E})$, and the convergence is uniform in $x$ on any
bounded subset of $\mathcal{E}$.
\end{thm}

By above, we have that the generator of our system can be decomposed as
\[
\mathcal{L} = \frac{1}{2}\sum_{k=1}^N\sum_{{\bf m} \in \{0, \dots, R+1\}^N}\mathcal{L}_{\bf m}^{(k)}
\]
where, for  $k\in\{1, \dots, N\}$ and ${\bf m}\in\{0,...,R+1\}^N$, $\mathcal{L}_{\bf m}^{(k)}$ is the generator of the semigroup $S_t^{(k, {\bf m})}$.  The
associated SDE is given by
\[
dY(t) = A_{\bf m}^{(k)}Y(t)dt + B_{\bf m}^{(k)}(Y(t))dW(t),
\]
where $A_{\bf m}^{(k)} : E_\alpha \to E_\alpha$ and $B_{\bf m}^{(k)} \in L\left(E_\alpha, L_{HS}(E_\alpha, H)\right)$ are such that
\[
A= \sum_{k=1}^N\sum_{{\bf m}\in\{0, \dots, R+1\}^N}A_{\bf m}^{(k)}
\]
and
\[
BB^* = \sum_{k=1}^N\sum_{{\bf m}\in\{0, \dots, R+1\}^N} B_{\bf m}^{(k)}\left(B_{\bf m}^{(k)}\right)^*.
\]

We can then apply Theorem \ref{zabczyk} iteratively to get the result.  Indeed, order the set
\[
\{1, \dots, N\}\times\{0, \dots, R+1\}^N =\{ \iota_1, \dots, \iota_S\}
\]
where $S=N(R+2)^N$.  If $\iota_l = (k, {\bf m}) \in\{1, \dots, N\}\times\{0, \dots, R+1\}^N$, write
\[
A_{\bf m}^{(k)}=A_{\iota_l}, \quad B_{\bf m}^{(k)} = B_{\iota_l}, \quad \mathcal{L}_{\bf m}^{(k)} = \mathcal{L}_{\iota_l}, \quad S^{k,{\bf m}}_t =
S^{\iota_l}_t.
\]
Then define for $1\leq l \leq S$
\[
\qquad \tilde{A}_{l} := \sum_{j=1}^lA_{\iota_j}
\]
and $\tilde{B}_l\in L\left(E_\alpha, L_{HS}(E_\alpha, H)\right)$ to be such that
\[
\tilde{B}_l\tilde{B}_l^* := \sum_{j=1}^{l}B_{\iota_j}B_{\iota_j}^*.
\]
Consider the SDE
\[
d\tilde{Y}_l(t) = \tilde{A}_l\tilde{Y}_l(t)dt + \tilde{B}_l\left(\tilde{Y}_l(t)\right)dW(t),
\]
which has generator $\tilde{\mathcal{L}}_l = \sum_{j=1}^l\mathcal{L}_{\iota_j}$. Let $(\tilde{P}^l_t)_{t\geq0}$ be the semigroup on
$\mathcal{U}C_b(E_{\alpha})$ associated with
$\tilde{\mathcal{L}}_l$.
By a first application of Theorem \ref{zabczyk}, for all $f\in \mathbb{K}$, we have
\[
\lim_{n\to\infty}\left(S^{\iota_1}_{\frac{t}{n}}S^{\iota_2}_{\frac{t}{n}}\right)^nf(x) = \tilde{P}^2_tf(x)
\]
where the convergence is uniform on bounded subsets.  Moreover, by claim \eqref{claim} above and the dominated convergence theorem, we have
\begin{align}
\label{iteration1}
\mu_{r\bf G}\left(f\tilde{P}^2_tg\right) &
= \lim_{n\to\infty}\mu_{r\bf
G}\left(f\left(S^{\iota_1}_{\frac{t}{n}}S^{\iota_2}_{\frac{t}{n}}\right)^ng\right)\nonumber\\
&
= \lim_{n\to\infty}\mu_{r\bf G}\left(g\left(S^{\iota_1}_{\frac{t}{n}}S^{\iota_2}_{\frac{t}{n}}\right)^nf\right)
=\mu_{r\bf G}\left(g\tilde{P}^2_tf\right)
\end{align}
for all $f,g\in\mathcal{U}C^2_b(E_{\alpha})$.  Similarly, for all $f\in \mathbb{K}$, we have
\[
\lim_{n\to\infty}\left(\tilde{P}^2_{\frac{t}{n}}S^{\iota_3}_{\frac{t}{n}}\right)^nf(x) = \tilde{P}_t^3f(x)
\]
where again the convergence is uniform on bounded sets, so that
\begin{align*}
\mu_{r\bf G}\left(f\tilde{P}_t^3g\right) &=  \lim_{n\to\infty}\mu_{r\bf G}\left(f\left(\tilde{P}^2_{\frac{t}{n}}S^{\iota_3}_{\frac{t}{n}}\right)^ng\right)\nonumber\\
& = \lim_{n\to\infty}\mu_{r\bf G}\left(g\left(\tilde{P}^2_{\frac{t}{n}}S^{\iota_3}_{\frac{t}{n}}\right)^nf\right)
=\mu_{r\bf G}\left(g\tilde{P}^3_tf\right),
\end{align*}
using identities \eqref{claim} and \eqref{iteration1}.  Continuing in this manner, we see that $P_t = \tilde{P}^{S}_t$, the semigroup
corresponding to
the generator $\mathcal{L}=\sum_{j=1}^{s}\mathcal{L}_{\iota_j}$,  is such that
\[
\mu_{r\bf G}\left(fP_tg\right) = \mu_{r\bf G}\left(gP_tf\right)
\]
for all $f,g\in \mathcal{U}C^2_b(E_{\alpha})$, as required.
\end{proof}

Finally, by standard arguments, we can extend the above result to functions in $L^p(\mu_{r\bf G})$.

\begin{cor}\label{cor:SemigroupReversibility}
The semigroup $(P_t)_{t\geq0}$ acting on $\mathcal{U}C_b(E_{\alpha})$ can be extended to $L^p(\mu_{r\bf G})$ for any $p\geq1$, $r>0$.
Moreover we have
\[
\mu_{r\bf G}(fP_tg) = \mu_{r\bf G}(gP_tf)
\]
for any $f, g\in L^2(\mu_{r\bf G})$ and $r>0$.
\end{cor}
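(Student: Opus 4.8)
The plan is to derive everything from the reversibility identity \eqref{reversible} of Theorem~\ref{symmetric measure} together with the probabilistic representation $P_t f(\cdot) = \E f(Y(t,\cdot))$ of Corollary~\ref{rep}, and then extend by density. First I would note that $\mu_{r\bf G}$ is invariant for $(P_t)_{t\ge 0}$: taking $g\equiv 1$ in \eqref{reversible} and using $P_t 1 = 1$ gives $\mu_{r\bf G}(P_t f) = \mu_{r\bf G}(f)$ for all $f\in\mathcal{U}C_b^2(E_\alpha)$, and a uniform-approximation argument of the kind already used in the proof of Theorem~\ref{symmetric measure} extends this to all $f\in\mathcal{U}C_b(E_\alpha)$. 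Since the representation formula shows that $P_t$ is positivity preserving with $P_t 1 = 1$, Jensen's inequality applied to the law of $Y(t,x)$ yields $|P_t f|^p \le P_t(|f|^p)$ pointwise for every $p\in[1,\infty)$; integrating and using invariance (applied to $|f|^p\in\mathcal{U}C_b(E_\alpha)$) gives $\|P_t f\|_{L^p(\mu_{r\bf G})} \le \|f\|_{L^p(\mu_{r\bf G})}$ for all $f\in\mathcal{U}C_b(E_\alpha)$.

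Next I would use that $\mathcal{U}C_b(E_\alpha)$ is dense in $L^p(\mu_{r\bf G})$ for every $p\in[1,\infty)$ --- a standard fact for Borel probability measures on separable metric spaces, proved by approximating indicators of closed sets by uniformly continuous functions, invoking outer regularity of $\mu_{r\bf G}$, and then a monotone-class/density argument. Because $P_t$ is a linear contraction on the dense subspace $\mathcal{U}C_b(E_\alpha)\subset L^p(\mu_{r\bf G})$, it extends uniquely to a linear contraction on all of $L^p(\mu_{r\bf G})$; these extensions agree on the overlaps $L^p\cap L^q$ (both coincide with the pointwise-defined $P_t$ on $\mathcal{U}C_b(E_\alpha)$), and in particular the extension to $L^2(\mu_{r\bf G})$ is well defined.

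Finally I would pass \eqref{reversible} to the limit. Given $f,g\in L^2(\mu_{r\bf G})$, choose $f_n, g_n\in\mathcal{U}C_b^2(E_\alpha)$ with $f_n\to f$ and $g_n\to g$ in $L^2(\mu_{r\bf G})$ (possible by density of $\mathcal{U}C_b^2(E_\alpha)$ in $L^2(\mu_{r\bf G})$, e.g.\ by mollifying cylindrical functions). Writing $\mu_{r\bf G}(f_n P_t g_n) - \mu_{r\bf G}(f P_t g) = \mu_{r\bf G}\big((f_n-f)P_t g_n\big) + \mu_{r\bf G}\big(f\,P_t(g_n-g)\big)$ and applying Cauchy--Schwarz together with the $L^2$-contraction bound $\|P_t h\|_{L^2}\le\|h\|_{L^2}$, the two terms are dominated by $\|f_n-f\|_{L^2}\|g_n\|_{L^2}$ and $\|f\|_{L^2}\|g_n-g\|_{L^2}$, which both tend to $0$; the symmetric estimate handles $\mu_{r\bf G}(g_n P_t f_n)$. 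Since $\mu_{r\bf G}(f_n P_t g_n) = \mu_{r\bf G}(g_n P_t f_n)$ for every $n$ by Theorem~\ref{symmetric measure}, letting $n\to\infty$ gives $\mu_{r\bf G}(f P_t g) = \mu_{r\bf G}(g P_t f)$, as required.

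I do not expect a serious obstacle: the argument is routine functional analysis. The only points deserving a little care are the verification that $\mu_{r\bf G}$ is invariant --- which is what legitimises the Jensen/$L^p$-contraction step --- and the density of $\mathcal{U}C_b(E_\alpha)$ in $L^p(\mu_{r\bf G})$ on the infinite-dimensional space $E_\alpha$; the latter, however, holds for arbitrary Borel probability measures on separable metric spaces, so it presents no real difficulty.
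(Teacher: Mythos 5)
Your proof is correct and is precisely the ``standard argument'' the paper invokes without spelling out: invariance of $\mu_{r\bf G}$ from the reversibility identity with $g\equiv 1$, Jensen plus invariance to get the $L^p$-contraction on the dense set $\mathcal{U}C_b(E_\alpha)$, extension by continuity, and passing the symmetry identity to the $L^2$-limit via Cauchy--Schwarz. Nothing to add; the two sub-points you flag (invariance legitimising the Jensen step, and density of $\mathcal{U}C_b$ in $L^p$ for a Borel probability measure on a separable metric space) are indeed the only places where any care is needed, and you handle both correctly.
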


\section{Weak and strong continuity}\label{Section.5}
In this section we will show that the semigroup $(e^{-\beta t}P_t)_{t\geq 0}$, is weakly continuous for some $\beta>0$ in the sense of definition given
in \cite{[Cerrai-1994]}.
This will allow us to deduce the closedness of the generator $\mathcal{L}$ in $L^2(d\mu_{r\G})$ and the strong continuity of
$(P_t)_{t\geq 0}$.  Another approach to strong continuity of diffusion semigroups and connected questions is discussed in \cite{G-K}.

Let $\mathcal{E}$ be an arbitrary separable Hilbert space.  The following definition is found in
\cite{[Cerrai-1994]}.

\begin{defin}
\label{weak cont}
A semigroup of  bounded linear operators $(S_t)_{t\geq
0}$ defined on $\mathcal{U}C_b(\mathcal{E})$ is said to be weakly continuous
if
\begin{trivlist}
\item[(i)] the family of functions $\{S_t\phi\}_{t\geq 0}$ is
equi-uniformly continuous for every $\phi\in\mathcal{U}C_b(\mathcal{E})$;

\item[(ii)] for every $\phi\in\mathcal{U}C_b(\mathcal{E})$ and for every
compact set $K\subset H$
\begin{equation}
\label{WeakCont_1} \liml_{t\to 0}\supl_{x\in
K}|S_t\phi(x)-\phi(x)|=0;
\end{equation}
\item[(iii)] for every $\phi\in\mathcal{U}C_b(\mathcal{E})$ and for every
sequence $\{\phi_j\}_{j\in\mathbb{N}}\subset \mathcal{U}C_b(\mathcal{E})$ such
that
\begin{displaymath}
\left\{
\begin{array}{rcl}
\supl_{j}|\phi_j|_{L^{\infty}}&<&\infty,\\
\liml_{j\to\infty}\supl_{x\in K}|\phi_j(x)-\phi(x)|&=&0,\ {\mathrm\ for\ all\ compact\ }K\subset\mathcal{E},
\end{array}
\right.
\end{displaymath}
it holds that
\begin{equation}\label{WeakCont_2}
\liml_{j\to\infty}\supl_{x\in
K}|S_t\phi_j(x)-S_t\phi(x)|=0,
\end{equation}
for every compact set $K\subset E$, and furthermore the limit is uniform
in $t\geq 0$;
\item[(iv)]
there exist $\mathcal{M},\omega>0$ such that
\begin{equation}
|S_tf|_{\mathcal{U}C_b(\mathcal{E})}\leq \mathcal{M}e^{-\omega t}|f|_{\mathcal{U}C_b(\mathcal{E})},\qquad t\geq0
\label{ExponentialDecrease}
\end{equation}
for all $f\in\mathcal{U}C_b(\mathcal{E})$.
\end{trivlist}
\end{defin}

Now suppose we are in the situation of Sections \ref{The system}, \ref{solution} and \ref{Invariant measure} above.
Define $\tilde{H}\subset E_{\alpha}$ by
\[
\tilde{H} := \left\{ x\in \Omega: |x|_{\tilde{H}}^2:=\sum_{\bi\in\Z^N}x_{\bi}^2 <\infty\right\}.
\]
\begin{thm}
Assume that there exist $C_1,C_2>0$ such that
\begin{equation}
C_2 V(x)\leq|x|^2_{\tilde{H}}\leq C_1 V(x),\quad x\in \tilde{H}.
\label{ULbound_V}
\end{equation}
Then there exists $\beta>0$ such that semigroup $(\tilde P_t)_{t\geq0}:=\left(e^{-\beta
t}P_t\right)_{t\geq 0}$ is weakly continuous in $\mathcal{U}C_b(E_{\alpha})$.
\end{thm}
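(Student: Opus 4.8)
The plan is to verify the four conditions (i)--(iv) of Definition \ref{weak cont} for the rescaled semigroup $\tilde P_t = e^{-\beta t}P_t$ on $\mathcal{U}C_b(E_\alpha)$, for a suitable choice of $\beta>0$. The first ingredient is a moment bound on the mild solution $Y(t,x)$ in the Hilbert space $\tilde H$: using the It\^o form \eqref{Ito op form}, the It\^o formula applied to $|Y(t)|^2_{\tilde H}$, the hypothesis \eqref{ULbound_V} relating $|x|^2_{\tilde H}$ to $V(x)$, and the formal conservation of $V$ under the dynamics (the quadratic-variation terms and the drift $A$ are designed precisely so that $\mathcal{L}V=0$ in the cylindrical sense), I expect to obtain an estimate of the form $\E|Y(t,x)|^2_{\tilde H} \le e^{Ct}|x|^2_{\tilde H}$, or at least $\E|Y(t,x)|^2_{\tilde H}\le C_1/C_2\,|x|^2_{\tilde H}\cdot e^{Ct}$ with an exponential growth controlled by the bound $M$ on the matrix entries. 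More useful still, since $V$ is conserved one should get $\E V(Y(t,x))=V(x)$ on cylindrical functions and hence control of $\E|Y(t,x)|^2_{\tilde H}$ that does not deteriorate too fast; in any case a bound linear-exponential in $t$ suffices. One also needs continuous dependence on the initial datum: $\E|Y(t,x)-Y(t,y)|^2_{E_\alpha}\le e^{Lt}|x-y|^2_{E_\alpha}$, which follows from the Lipschitz property of $A$ and $B$ established before Proposition \ref{existence} together with Gronwall's inequality.

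With these estimates in hand the four conditions follow along fairly standard lines. For (iv): since $\|P_tf\|_\infty\le\|f\|_\infty$ (the representation $P_tf=\E f(Y(t,\cdot))$ from Corollary \ref{rep} makes $P_t$ a contraction on $\mathcal{U}C_b(E_\alpha)$), we get $\|\tilde P_tf\|_{\mathcal{U}C_b}\le e^{-\beta t}\|f\|_{\mathcal{U}C_b}$, so any $\beta>0$ works and we set $\omega=\beta$, $\mathcal{M}=1$. For (i), the equi-uniform continuity of $\{\tilde P_t\phi\}$: given $\phi\in\mathcal{U}C_b(E_\alpha)$ with modulus of continuity $\rho$, estimate $|P_t\phi(x)-P_t\phi(y)|\le\E|\phi(Y(t,x))-\phi(Y(t,y))|$ and split into the event where $|Y(t,x)-Y(t,y)|_{E_\alpha}$ is small (controlled by $\rho$) and its complement (controlled by $2\|\phi\|_\infty$ times a Chebyshev bound using $\E|Y(t,x)-Y(t,y)|^2_{E_\alpha}\le e^{Lt}|x-y|^2_{E_\alpha}$); the decaying factor $e^{-\beta t}$ is what makes the family equi-uniformly continuous uniformly in $t\ge 0$, which is the reason $\beta$ must be chosen larger than the Lipschitz exponent $L$ (this forces the actual constraint on $\beta$). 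For (ii): on a compact $K\subset E_\alpha$, $\sup_{x\in K}|P_t\phi(x)-\phi(x)|\to 0$ as $t\to0$ follows from the Feller property (Corollary \ref{rep}), the continuity of paths $t\mapsto Y(t,x)$ in $E_\alpha$ (Proposition \ref{existence}), and a uniform-integrability / equicontinuity argument over $K$. For (iii): given $\phi_j\to\phi$ uniformly on compacts with $\sup_j\|\phi_j\|_\infty<\infty$, write $P_t\phi_j(x)-P_t\phi(x)=\E[(\phi_j-\phi)(Y(t,x))]$ and again split over $\{Y(t,x)\in K_R\}$ for a large ball $K_R$ (using a tightness bound on the law of $Y(t,x)$ for $x$ in a compact set, coming from the moment estimate in $\tilde H$, and noting that bounded balls of $\tilde H$ are compact in $E_\alpha$) and its complement; the uniformity in $t\ge0$ of the convergence is where the factor $e^{-\beta t}$ again is essential.

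The main obstacle is the moment/tightness control in the $\tilde H$-norm. The solution lives in $E_\alpha$, where bounded sets are not compact, so conditions (ii) and (iii) genuinely require controlling $\E|Y(t,x)|^2_{\tilde H}$ (balls in $\tilde H$ being compact in $E_\alpha$), and this is exactly where the hypothesis \eqref{ULbound_V} enters: it transfers the formal conservation of $V$ --- which is visible only at the level of the cylindrical generator $\mathcal{L}$ --- into a genuine a priori bound on $|Y(t,x)|^2_{\tilde H}$. Making the conservation-of-$V$ argument rigorous (since $V$ itself is only a formal expression, one must work with $V_\Lambda$ on a growing sequence of finite boxes $\Lambda$, control the boundary terms that appear in $\mathcal{L}V_\Lambda$ using the finite range $R$ and the bound $M$, and pass to the limit) is the delicate point; everything else is a routine, if somewhat lengthy, application of the stochastic calculus already set up in Sections \ref{solution} and \ref{Invariant measure} together with Chebyshev splitting.
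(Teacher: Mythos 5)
Your proposal follows essentially the same route as the paper: establish the exponential second-moment bound $\E|Y(t,x)|_{E_\alpha}^2\le e^{qt}|x|_{E_\alpha}^2$ via It\^o and Gronwall, set $\beta=q$, and then verify (i)--(iv) with Chebyshev splittings, with (iii) handled by transferring conservation of $V$ through assumption \eqref{ULbound_V} into an a priori bound in $\tilde H$ and invoking the compact embedding $\tilde H\hookrightarrow E_\alpha$. A couple of small points where the paper is sharper than your sketch: for the $\tilde H$-control the paper actually uses the \emph{pathwise} identity $V(Y_t(x(i)))=V(x(i))$ $\mathbb{P}$-a.s.\ (from It\^o and $\bX_{\bi,\bj}V=0$), covering the compact $K$ by $\varepsilon/2$-balls around finitely many points $x(i)\in\tilde H$ (dense in $E_\alpha$), rather than only the in-expectation statement $\E V(Y_t)=V(x)$ that you propose; this yields a.s.\ compactness of $\overline{\cup_{t\ge0}Y_t(K)}$, which is what the dominated-convergence step in (iii) needs. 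Relatedly, because that compactness is genuinely uniform in $t$, the factor $e^{-\beta t}$ is not actually what delivers the uniformity in (iii); it is used only trivially there and its real role is to make the estimate in (i) $t$-independent, which is where (as you correctly note) the constraint $\beta\ge q$ originates.
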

\begin{rem}
Assumption \eqref{ULbound_V} is satisfied if $\M$ is strictly positive definite and the coefficients of $\M$ are uniformly bounded, as in our case, though we state the result in a more general form.
\end{rem}
%
%
%
%

\begin{proof}
First notice that there exists $q=q(\alpha)>0$, such that
\begin{equation}
P_t|Id|_{E_{\alpha}}^2(x)\leq |x|_{E_{\alpha}}^2e^{qt},\label{ExponentialGrowth}
\end{equation}
for all $x\in E_\alpha$ and $t>0$.  Indeed,
$P_t|Id|_{E_{\alpha}}^2(x)=\mathbb{E}|Y_t(x)|_{E_{\alpha}}^2$, where
$Y_t$ is a solution of equation \eqref{Ito op form}.  Inequality \eqref{ExponentialGrowth} then follows from It\^o's formula,
the boundedness of linear maps $A\in L(E_{\alpha},E_{\alpha})$ and $B\in
L(E_{\alpha},L_{HS}(H,E_{\alpha}))$ and Gronwall's lemma. 
Put $\beta=q$. We check the requirements of Definition \ref{weak cont}.

\begin{trivlist}
\item[(i)] Let $\phi\in\mathcal{U}C_b(E)$. Then for any $\varepsilon>0$
there exists $\delta(\varepsilon)>0$ such that
$|x-y|_{E_\alpha}<\delta(\varepsilon)\Rightarrow |\phi(x)-\phi(y)|<\varepsilon$.
Thus, for any $x,y\in E_{\alpha}$,
\begin{align}
\label{EquiUniContinuity}
|\tilde P_t\phi(x)-\tilde P_t\phi(y)| & \leq
e^{-qt}\mathbb{E}\left(\Ind_{\{|Y_t(x)-Y_t(y)|<\delta(\varepsilon/2)\}}|\phi(Y_t(x))-\phi(Y_t(y))|\right)\nonumber\\
&\quad + e^{-qt}\mathbb{E}\left(\Ind_{\{|Y_t(x)-Y_t(y)|\geq\delta(\varepsilon/2)\}}|\phi(Y_t(x))-\phi(Y_t(y))|\right)\nonumber\\
&\leq \frac{\varepsilon}{2}+2e^{-qt}|\phi|_{L^{\infty}}\mathbb{P}\left\{|Y_t(x)-Y_t(y)|_{E_\alpha}\geq\delta(\varepsilon/2)\right\}\nonumber\\
&\leq \frac{\varepsilon}{2}+\frac{2|\phi|_{L^{\infty}}}{\delta^2(\varepsilon/2)}|x-y|_{E_{\alpha}}^2,
\end{align}
where we have used Chebyshev's inequality.
Choose $\delta_1(\varepsilon)$ such that
$\frac{2|\phi|_{L^{\infty}}}{\delta^2(\varepsilon/2)}\delta_1(\varepsilon)^2=\frac{\varepsilon}{2}$.
Then $|x-y|_{E_{\alpha}}<\delta_1(\varepsilon) \Rightarrow |\tilde P_t\phi(x)-\tilde P_t\phi(y)|\leq \varepsilon$, and so the first requirement holds.

\item[(ii)] Fix compact $K\subset E_{\alpha}$ and $\phi\in
\mathcal{U}C_b(E_{\alpha})$. For $\varepsilon>0$ again let $\delta(\varepsilon)>0$ be
such that $|x-y|_{E_\alpha}\leq \delta(\varepsilon)\Rightarrow
|\phi(x)-\phi(y)|\leq\varepsilon$ for any $x,y\in E_{\alpha}$.
Since
\begin{eqnarray*}
  |\tilde P_t\phi(x)-\phi(x)| &\leq& |P_t\phi(x)-\phi(x)|+ (1-e^{-qt})|P_t\phi|_{L^\infty} \\
  &\leq& |P_t\phi(x)-\phi(x)|+ (1-e^{-qt})|\phi|_{L^\infty},
\end{eqnarray*}
it is enough to show that
\begin{equation}
\label{WeakCont_1a} \liml_{t\to 0}\supl_{x\in
K}|P_t\phi(x)-\phi(x)|=0.
\end{equation}
A similar calculation to the above yields
\begin{align}
|P_t\phi(x)-\phi(x)|&\leq \mathbb{E}|\phi(Y_t(x))-\phi(x)|\nonumber\\
 &\leq \mathbb{E}\Ind_{\{|Y_t(x)-x|_{E_\alpha}\leq \delta(\varepsilon/2)\}}|\phi(Y_t(x))-\phi(x)| \nonumber\\
 &\quad +\mathbb{E}\Ind_{\{|Y_t(x)-x|_{E_\alpha}>
 \delta(\varepsilon/2)\}}|\phi(Y_t(x))-\phi(x)|\nonumber\\
 &\leq \varepsilon/2+2|\phi|_{L^{\infty}}\mathbb{P}\left\{|Y_t(x)-x|_{E_\alpha}>
 \delta(\varepsilon/2)\right\}\nonumber\\
 &\leq\varepsilon/2+\frac{2|\phi|_{L^{\infty}}}{\delta^2(\varepsilon/2)}\mathbb{E}|Y_t(x)-x|_{E_{\alpha}}^2.
\label{WC_aux_0}
\end{align}

Since $Y_t$ is a mild solution of equation
\eqref{system}, we have
$$
Y_t(x)-x=e^{At}x-x+\int_0^te^{A(t-s)}B(Y_s)dW_s.
$$
Therefore, using the It\^o isometry, for $x\in E_{\alpha}$ and $t\geq 0$, we see that
\begin{align}
    \mathbb{E}|Y_t(x)-x|_{E_{\alpha}}^2&\leq
    2|e^{At}x-x|_{E_{\alpha}}^2+2\mathbb{E}\int_0^t|e^{A(t-s)}B(Y_s)|_{L_{HS}(H,E_{\alpha})}^2ds\nonumber\\
    &\leq
    2|e^{At}-Id|_{L(E_{\alpha},E_{\alpha})}^2|x|_{E_{\alpha}}^2\nonumber\\
    & \quad +2\supl_{\tau\in [0,t]}|e^{A\tau}|_{L(E_{\alpha},E_{\alpha})}^2|B|_{L(E_{\alpha},L_{HS}(H,E_{\alpha}))}^2\intl_0^t\mathbb{E}|Y_s|_{E_{\alpha}}^2ds\nonumber\\
    &\leq 2|e^{At}-Id|_{L(E_{\alpha},E_{\alpha})}^2|x|_{E_{\alpha}}^2\nonumber\\
    &\quad + 2e^{2|A|_{L(E_{\alpha},E_{\alpha})}t}|B|_{L(E_{\alpha},L_{HS}(H,E_{\alpha}))}^2|x|_{E_{\alpha}}^2\frac{e^{qt}-1}{q},\label{WC_aux_01}
\end{align}
where the last inequality follows from \eqref{ExponentialGrowth}. Combining \eqref{WC_aux_0}
and \eqref{WC_aux_01}, we get for $t\in[0,1]$,
\[
|P_t\phi(x)-\phi(x)|\leq
\varepsilon/2+\frac{4|\phi|_{L^{\infty}}}{\delta^2(\varepsilon/2)}|x|_{E_{\alpha}}^2\Big(C(A,B,q)(e^{qt}-1)+2|e^{At}-Id|_{L(E_{\alpha},E_{\alpha})}^2\Big).
\]
Choose $\tau\in (0,1]$ such that
$$
\frac{2|\phi|_{L^{\infty}}}{\delta^2(\varepsilon/2)}\supl_{x\in
K}|x|_{E_{\alpha}}^2\left[C(A,B,q)(e^{q\tau}-1)+2\supl_{t\in[0,\tau]}|e^{At}-Id|_{L(E_{\alpha},E_{\alpha})}^2\right]\leq\varepsilon/2.
$$
Then for any $0\leq
t\leq\tau$,
\[
\supl_{x\in K}|P_t\phi(x)-\phi(x)|\leq\varepsilon,
\]
and \eqref{WeakCont_1a} follows.

\item[(iii)]
Fix compact $K\subset E_{\alpha}$. Define
$$
\tilde{K}=\tilde{K}(\omega)=\overline{\cupl_{t\geq 0}Y_t(K)},\quad\omega\in\Omega.
$$
We first show that $\tilde{K}$ is compact with probability $1$.  For any $\varepsilon>0$ there exist $x(1),\ldots, x(n)\in
E_{\alpha}$ such that
\begin{equation}
K\subset\cupl_{i=1}^nB_{\varepsilon/2}(x(i)).\label{WC_aux_1}
\end{equation}
Since $\tilde{H}$ is dense in $E_{\alpha}$ we can always assume that $x(i)\in
\tilde{H}$. It follows from assumption \eqref{ULbound_V} that
$V(x(i))<\infty$ for $i=1,\ldots,n$. Therefore, by It\^o's lemma and the
identity $\bX_{{\bf i},{\bf j}}V=0$, we conclude\footnote{We can assume that
exceptional set of measure $0$ in equality
\eqref{Conservation_Law} is the same for all $t\geq 0$ because we
can choose a continuous modification of the process $Y$.} that
$\mathbb{P}$-a.s.
\begin{equation}
V(Y_t(x(i)))=V(x(i)),\quad t\geq 0,\quad i=1,\ldots,n.\label{Conservation_Law}
\end{equation}
Hence, using assumption \eqref{ULbound_V} once more, we see that there
exists $C>0$ such that $\mathbb{P}$-a.s.
\begin{equation}\label{WC_aux_2}
    |Y_t(x(i))|_{\tilde{H}}\leq C\sup\limits_l |x(l)|_{\tilde{H}},\quad t\geq 0,\quad i=1,\ldots,n.
\end{equation}
Since the embedding $\tilde{H}\subset E_{\alpha}$ is compact, there exist
$y(1),\ldots, y(m)\in E_{\alpha}$ such that
\begin{equation}
\cupl_{t,i}Y_t(x(i))\subset\cupl_{l=1}^mB_{\varepsilon/2}(y(l))\label{WC_aux_3}
\end{equation}
$\mathbb{P}$-a.s.  Combining identities \eqref{WC_aux_1} and \eqref{WC_aux_3} we
deduce that
\begin{equation}
\cupl_{t\geq 0}Y_t(K)\subset
\cupl_{l=1}^mB_{\varepsilon}(y(l))\quad \label{WC_aux_4}
\end{equation}
and so $\tilde{K}$ is  compact $\mathbb{P}$-a.s. as claimed.

Now let $\phi\in\mathcal{U}C_b(\mathcal{E})$ and $\{\phi_j\}_{j\in\mathbb{N}} \subset \mathcal{U}C_b(\mathcal{E})$ be such that
$\sup_j|\phi_j|_{L^\infty} <\infty$ and
\[
\lim_{j\to\infty}\sup_{x\in K} |\phi_j(x) - \phi(x)| = 0
\]
for all compact $K\subset E_\alpha$.  Note that
\begin{eqnarray}
\supl_{x\in K}|\tilde P_t\phi_j(x)-\tilde P_t\phi(x)|\leq e^{-qt}\supl_{x\in
K}\mathbb{E}|\phi_j(Y_t(x))-\phi(Y_t(x))|\nonumber\\
\leq \mathbb{E}\supl_{y\in \tilde{K}}|\phi_j(y)-\phi(y)|,\label{WC_aux_5}
\end{eqnarray}
for all $t\geq0,j\in \mathbb{N}$.
Since $\tilde{K}$ is compact with probability $1$, we have that
$\mathbb{P}$-a.s.
\[
\supl_{y\in \tilde{K}}|\phi_j(y)-\phi(y)|\to 0\mbox{ as
}j\to\infty.\label{WC_aux_6}
\]
Thus, by the dominated convergence theorem, we conclude that
\[
\lim_{j\to\infty}\sup_{x\in K}|\tilde P_t\phi_j(x) - \tilde P_t\phi(x)| = 0
\]
for all compact $K\subset E_\alpha$.

\item[(iv)] Since $\tilde{P}_tf=e^{-qt}\mathbb{E}f(Y_t)$, we have that
$$
|\tilde P_tf|_{\mathcal{U}C_b(E_{\alpha})} \leq e^{-qt}|f|_{\mathcal{U}C_b(E_{\alpha})},
$$
for all $f\in \mathcal{U}C_b(E_{\alpha})$ and $t\geq0$.
\end{trivlist}
\end{proof}

\begin{cor}
The operator $\mathcal{L}$ is closed and the semigroup $(P_t)_{t\geq 0}$ is strongly continuous in $L^2(d\mu_{r\G})$, for all $r>0$.
\end{cor}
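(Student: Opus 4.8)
The plan is to read off both statements from the preceding theorem, which provides weak continuity of $(\tilde P_t)_{t\ge 0}=(e^{-\beta t}P_t)_{t\ge 0}$ in $\mathcal{U}C_b(E_\alpha)$ (note that the hypothesis \eqref{ULbound_V} holds in our setting, by the preceding remark), together with the reversibility of $\mu_{r\G}$ from Corollary \ref{cor:SemigroupReversibility}. Since $P_t$ and $\tilde P_t$ differ only by the scalar factor $e^{\beta t}$, which is continuous in $t$ and equals $1$ at $t=0$, strong continuity of the two is equivalent, so it suffices to show that $(P_t)_{t\ge0}$ is a $C_0$-semigroup on $L^2(d\mu_{r\G})$; closedness of the generator $\mathcal{L}$ is then automatic.

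First I would record the uniform $L^2$-bound. Putting $g\equiv 1$ in the reversibility identity of Corollary \ref{cor:SemigroupReversibility} and using $P_t 1=1$ gives $\mu_{r\G}(P_t f)=\mu_{r\G}(f)$, i.e. $\mu_{r\G}$ is invariant; combined with Jensen's inequality $|P_t f|^2\le P_t|f|^2$ (valid since $P_t f(\cdot)=\E f(Y(t,\cdot))$ by Corollary \ref{rep}), this yields $\|P_t f\|_{L^2(d\mu_{r\G})}\le \|f\|_{L^2(d\mu_{r\G})}$, so $P_t$ extends to a contraction on $L^2(d\mu_{r\G})$, uniformly in $t$ (this is part of Corollary \ref{cor:SemigroupReversibility}, but worth re-deriving here).

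Next, strong continuity at $t=0$ on a dense set. Fix $\phi\in\mathcal{U}C_b(E_\alpha)$. Property (ii) of weak continuity gives $\sup_{x\in K}|P_t\phi(x)-\phi(x)|\to 0$ as $t\to 0$ for every compact $K\subset E_\alpha$. Because $\mu_{r\G}$ is a centred Gaussian measure on the separable Hilbert space $E_\alpha$, it is Radon: given $\varepsilon>0$ choose a compact $K$ with $\mu_{r\G}(E_\alpha\setminus K)<\varepsilon$, and split the integral over $K$ and its complement to obtain $\mu_{r\G}(|P_t\phi-\phi|^2)\le \sup_{x\in K}|P_t\phi(x)-\phi(x)|^2 + 4|\phi|_{L^\infty}^2\,\varepsilon$, whence $\limsup_{t\to 0}\mu_{r\G}(|P_t\phi-\phi|^2)\le 4|\phi|_{L^\infty}^2\,\varepsilon$; letting $\varepsilon\downarrow 0$ gives $P_t\phi\to\phi$ in $L^2(d\mu_{r\G})$. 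Since $\mathcal{U}C_b(E_\alpha)$ is dense in $L^2(d\mu_{r\G})$, a standard $3\varepsilon$-argument using the uniform contractivity of the previous step upgrades this to $P_t f\to f$ in $L^2(d\mu_{r\G})$ for every $f\in L^2(d\mu_{r\G})$; together with the semigroup property, $(P_t)_{t\ge0}$ is therefore a $C_0$-semigroup on $L^2(d\mu_{r\G})$. (Alternatively, one may invoke directly the result of \cite{[Cerrai-1994]} that a weakly continuous semigroup admitting an invariant measure extends to a strongly continuous semigroup on the associated $L^p$-spaces.) Finally, the generator of a $C_0$-semigroup is closed and densely defined; by the martingale-problem characterisation of Lemma \ref{generator} and Kolmogorov's backward equation from Corollary \ref{rep}, this generator agrees with the differential operator $\mathcal{L}$ on $\mathcal{U}C_b^2(E_\alpha)$, so $\mathcal{L}$ is closed in $L^2(d\mu_{r\G})$ (and, being symmetric by Corollary \ref{cor:SemigroupReversibility}, in fact self-adjoint).

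The main obstacle is the passage from the uniform-on-compacts statement (ii) to genuine $L^2(d\mu_{r\G})$-convergence: this is precisely where the tightness (Radonness) of the Gaussian measure $\mu_{r\G}$ and the uniform $L^2$-contractivity of $P_t$ are used, and without them weak continuity would not imply strong continuity. The remaining ingredients — density of $\mathcal{U}C_b(E_\alpha)$ in $L^2(d\mu_{r\G})$, the $3\varepsilon$-argument, and the fact that a $C_0$-generator is closed — are routine.
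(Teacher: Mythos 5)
Your proposal is correct and takes essentially the same route as the paper: the paper's terse "property (ii) of the definition of weak continuity and a standard approximation procedure" is exactly your argument via tightness of the Gaussian measure $\mu_{r\G}$, the uniform $L^2$-contractivity that follows from invariance and Jensen's inequality, and density of $\mathcal{U}C_b(E_\alpha)$ in $L^2(d\mu_{r\G})$. The only cosmetic difference is in the closedness claim: the paper cites Theorem 5.1 of \cite{[Cerrai-1994]} (closedness of the weak generator in $\mathcal{U}C_b$), whereas you obtain closedness of the generator directly from the $C_0$-semigroup structure on $L^2$ — both are valid, and you even flag the Cerrai route as an alternative.
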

\begin{proof}
$\mathcal{L}$ is closed by Theorem 5.1 of \cite{[Cerrai-1994]}. Strong continuity follows from property (ii) of the definition of weak
continuity
above and a standard approximation procedure.
\end{proof}

\section{Symmetry in Sobolev spaces} 
\label{symmetry}\label{Section.6}

In this section we show that the generator $\mathcal{L}$ as given in Lemma \ref{generator} is symmetric and dissipative in some family of infinite dimensional Sobolev spaces.  In the next section this result will be useful in the proof of ergodicity of the semigroup generated by $\mathcal{L}$.
For $r>0$, we start by introducing the following Dirichlet operator:
\[
(f,L_r g)_{L^2(\mu_{r\G})}= -\sum_{{\bf k,l}\in\Z^N}\G_{\bf k, l}(\partial_{\bf k}f, \partial_{\bf l}g)_{L^2(\mu_{r\G})}
\]
where $\G = \M^{-1}$ is the covariance matrix associated to the measure $\mu_\G$, as above.
That is, on a dense domain including \(\mathcal{U}C_b^2\), we have
\begin{equation}
L_rg = \sum_{{\bf k,l}\in\Z^N}\G_{\bf k, l}\partial_{\bf k}  \partial_{\bf l}g   
- r^{-1}D g
\end{equation}
where
\begin{equation}
Dg \equiv \sum_{{\bf k }\in\Z^N}x_{\bf k}\partial_{\bf k} g.
\end{equation}
$D$ will play the role of the dilation generator in our set-up. We remark that
\[[D, \bX_{\bf i,j}] = 0\]
i.e. our fields are of order zero.
Thus
\[[D, \mathcal{L}]=0.\]
Note also that by a simple computation, we get
\[
\left[\sum_{{\bf k, l}\in\Z^N}\G_{\bf k, l}\partial_{\bf k}  \partial_{\bf l}, \mathcal{L}\right]=0.\]
This is because
\[ [\partial_{\bf k}, \bX_{{\bf i},{\bf j}} ]
= [\partial_{\bf k}, \sum_{{\bf l}\in\Z^N}{\bf M}_{\bf i,l}x_{\bf l}\partial_{\bf j} -   \sum_{{\bf l}\in\Z^N}{\bf M}_{\bf j,l}x_{\bf l}\partial_{\bf i}]
= {\bf M}_{\bf i,k} \partial_{\bf j} - {\bf M}_{\bf j,k} \partial_{\bf i},
\]
so that
\[ \left[\sum_{{\bf k, l}\in\Z^N}\G_{\bf k, l}\partial_{\bf k}  \partial_{\bf l}, \bX_{{\bf i},{\bf j}}\right]=0. \]

We thus obtain the following result.

\begin{prop}\label{commutativity1}
On \(\mathcal{U}C_b^4\), we have
\begin{equation}
[L_r, \bX_{{\bf i},{\bf j}}]  = 0
\end{equation}
for all ${\bf i,j} \in\Z^N$ and all $r>0$, from which it follows that
\begin{equation}
[L_r,\mathcal{L}]  = 0
\end{equation}
for all $r>0$.
\end{prop}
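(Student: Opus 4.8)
The plan is to reduce everything to the two commutator identities already recorded just before the proposition. Decompose $L_r = L_r^{\circ} - r^{-1}D$, where $L_r^{\circ} := \sum_{{\bf k},{\bf l}\in\Z^N}\G_{{\bf k},{\bf l}}\partial_{\bf k}\partial_{\bf l}$ is the second-order part and $D$ is the dilation generator. We have already noted that $[D,\bX_{{\bf i},{\bf j}}] = 0$ (the fields are homogeneous of degree zero), and the short computation using $[\partial_{\bf k},\bX_{{\bf i},{\bf j}}] = \M_{{\bf i},{\bf k}}\partial_{\bf j} - \M_{{\bf j},{\bf k}}\partial_{\bf i}$ together with $\G\M = \mathrm{Id}$ shows $[L_r^{\circ},\bX_{{\bf i},{\bf j}}] = 0$. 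Hence, by linearity of the commutator, $[L_r,\bX_{{\bf i},{\bf j}}] = [L_r^{\circ},\bX_{{\bf i},{\bf j}}] - r^{-1}[D,\bX_{{\bf i},{\bf j}}] = 0$. Since $\bX_{{\bf i},{\bf j}}$ is first order (with linear coefficients) while $L_r$ is second order, both compositions $L_r\bX_{{\bf i},{\bf j}}g$ and $\bX_{{\bf i},{\bf j}}L_rg$ are well-defined continuous functions already for $g\in\mathcal{U}C_b^3$; I would phrase the statement on $\mathcal{U}C_b^4$ because this is the domain comfortably needed for the consequence below, and because it contains a dense core of cylindrical functions on which the identity is simply the classical finite-dimensional fact.

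For the second assertion I would write $\mathcal{L} = \frac{1}{4}\sum_{{\bf i}\sim{\bf j}}\bX_{{\bf i},{\bf j}}^2$ as in Lemma~\ref{generator} and apply the Leibniz rule for commutators edge by edge:
\[
[L_r,\bX_{{\bf i},{\bf j}}^2] = [L_r,\bX_{{\bf i},{\bf j}}]\,\bX_{{\bf i},{\bf j}} + \bX_{{\bf i},{\bf j}}\,[L_r,\bX_{{\bf i},{\bf j}}] = 0,
\]
where the vanishing comes from the first part applied to $g$ and to $\bX_{{\bf i},{\bf j}}g$ (this is precisely where $g\in\mathcal{U}C_b^4$ is used, so that $\bX_{{\bf i},{\bf j}}g\in\mathcal{U}C_b^3$ and the first identity is applicable). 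It then remains to interchange the commutator with the sum over edges defining $\mathcal{L}$: for $g$ depending on only finitely many coordinates the sum is locally finite, so $[L_r,\mathcal{L}]g = \frac{1}{4}\sum_{{\bf i}\sim{\bf j}}[L_r,\bX_{{\bf i},{\bf j}}^2]g = 0$ at once, and the general $g\in\mathcal{U}C_b^4$ then follows by density of such cylindrical functions together with the closedness of the operators involved established in Section~\ref{Section.5}.

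The content here is essentially algebraic, so I do not anticipate a genuine analytic obstacle — every commutator identity needed is already in hand. The only real care is regularity bookkeeping: making precise that on $\mathcal{U}C_b^4$ (and not merely $\mathcal{U}C_b^2$) all of $\bX_{{\bf i},{\bf j}}g$, $L_r g$, $\bX_{{\bf i},{\bf j}}^2 g$ and $\mathcal{L}g$ are defined and smooth enough for the manipulations to be literal pointwise identities, and justifying the exchange of $[L_r,\cdot]$ with the infinite sum defining $\mathcal{L}$. I would dispose of the latter by proving the identity first on the cylindrical core and then passing to the closure.
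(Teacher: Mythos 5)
Your proposal matches the paper's argument: decompose $L_r$ into the constant-coefficient second-order part and $-r^{-1}D$, kill the $D$-term by homogeneity, kill the second-order term by the commutator identity $[\partial_{\bf k},\bX_{{\bf i},{\bf j}}]=\M_{{\bf i},{\bf k}}\partial_{\bf j}-\M_{{\bf j},{\bf k}}\partial_{\bf i}$ together with $\G\M=\mathrm{Id}$, and then pass from fields to $\mathcal{L}$ via the Leibniz rule for commutators. The paper presents these facts informally in the paragraph preceding the proposition rather than in a proof environment; your additional bookkeeping on regularity ($\mathcal{U}C_b^3$ vs.\ $\mathcal{U}C_b^4$) and the interchange of the commutator with the infinite sum over edges (on the cylindrical core plus closedness) is a welcome tightening of what the paper leaves implicit.
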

Keeping this in mind, we introduce the following family of  Hilbert spaces
$$
\tilde\X_r^n=\left\{f\in L^2(\mu_{r\G})\cap \mathcal{D}(L_r^n): |f|_{\tilde\X_r^n}^2 := |f|_{L^2(d\mu_{r\G})}^2+(f,(-L_r)^nf)_{L^2(d\mu_{r\G})}
<\infty\right\}
$$
equipped with the corresponding inner product
\[
(f, g)_{\tilde\X_n} = \mu_{r\G}(fg) + (f,(-L_r)^nf)_{L^2(d\mu_{r\G})},
\]
for $f, g\in\tilde\X_r^n$, where $n\in\mathbb{N}\cup \{0\}$ and $r>0$. Hence we obtain the following fact, (where besides Proposition \ref{commutativity1} we also use Lemma \ref{antisym}).

\begin{prop}\label{symmetry1}
For all $n\in\mathbb{N}\cup\{0\}$ and $r>0$, on a dense set \(\tilde{\mathcal{D}}_r^n\subset\tilde\X_r^n\), we have
\begin{equation}
(f, \mathcal{L}g)_{\tilde\X_r^n} = (\mathcal{L}f, g)_{\tilde\X_r^n}  = -\frac{1}{4}\sum_{{\bf i}\in\Z^N}\sum_{{\bf j}:{\bf i}\sim{\bf j}}(\bX_{{\bf i},{\bf j}}f, \bX_{{\bf i},{\bf j}}g)_{\tilde\X_r^n} .
\end{equation}
\end{prop}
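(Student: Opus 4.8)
The plan is to reduce the claim to the finite-dimensional (cylindrical) situation, where all the operators in question act on smooth functions depending on finitely many coordinates, and then to combine the two symmetry facts already established: the $\bX_{\mathbf{i},\mathbf{j}}$-antisymmetry of $\mu_{r\mathbf{G}}$ (Lemma~\ref{antisym}) and the commutation $[L_r,\bX_{\mathbf{i},\mathbf{j}}]=0$, hence $[L_r,\mathcal{L}]=0$ (Proposition~\ref{commutativity1}). Concretely, I would take $\tilde{\mathcal{D}}_r^n$ to be the set of cylindrical functions in $\mathcal{U}C_b^{2(n+2)}(E_\alpha)\cap\mathcal{D}(L_r^n)$ (enough derivatives so that all the operators below are legitimately applied), and first check that this set is dense in $\tilde\X_r^n$: cylindrical smooth functions are dense in $L^2(\mu_{r\mathbf{G}})$, and since $L_r$ restricted to cylindrical functions is essentially the Ornstein--Uhlenbeck-type operator $\sum \mathbf{G}_{\mathbf{k},\mathbf{l}}\partial_\mathbf{k}\partial_\mathbf{l} - r^{-1}D$, one obtains a core for $L_r^n$ inside this class (e.g. by using finite linear combinations of Hermite-type polynomials, or by the closedness of $\mathcal{L}$ and $L_r$ established in Section~5 together with a cutoff argument).

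Next I would unwind the inner product. By definition,
\[
(f,\mathcal{L}g)_{\tilde\X_r^n} = \mu_{r\mathbf{G}}(f\,\mathcal{L}g) + \big(f,(-L_r)^n\mathcal{L}g\big)_{L^2(\mu_{r\mathbf{G}})}.
\]
For the first summand, write $\mathcal{L} = \tfrac14\sum_{\mathbf{i}}\sum_{\mathbf{j}:\mathbf{i}\sim\mathbf{j}}\bX_{\mathbf{i},\mathbf{j}}^2$ and apply Lemma~\ref{antisym} twice to $\mu_{r\mathbf{G}}(f\bX_{\mathbf{i},\mathbf{j}}(\bX_{\mathbf{i},\mathbf{j}}g))$: one integration by parts moves a copy of $\bX_{\mathbf{i},\mathbf{j}}$ off $g$ onto $f$ at the cost of a sign, giving $-\mu_{r\mathbf{G}}((\bX_{\mathbf{i},\mathbf{j}}f)(\bX_{\mathbf{i},\mathbf{j}}g))$, which is manifestly symmetric in $f,g$; this also yields the $-\tfrac14\sum(\bX_{\mathbf{i},\mathbf{j}}f,\bX_{\mathbf{i},\mathbf{j}}g)_{L^2(\mu_{r\mathbf{G}})}$ piece of the asserted formula. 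For the second summand, I would first commute: $(-L_r)^n\mathcal{L}g = \mathcal{L}(-L_r)^n g$ by Proposition~\ref{commutativity1}, so
\[
\big(f,(-L_r)^n\mathcal{L}g\big)_{L^2(\mu_{r\mathbf{G}})} = \big(f,\mathcal{L}(-L_r)^n g\big)_{L^2(\mu_{r\mathbf{G}})} = -\tfrac14\sum_{\mathbf{i}}\sum_{\mathbf{j}:\mathbf{i}\sim\mathbf{j}}\big(\bX_{\mathbf{i},\mathbf{j}}f,\ \bX_{\mathbf{i},\mathbf{j}}(-L_r)^n g\big)_{L^2(\mu_{r\mathbf{G}})},
\]
again by the Lemma~\ref{antisym} integration by parts. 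Then commute once more, $\bX_{\mathbf{i},\mathbf{j}}(-L_r)^n g = (-L_r)^n\bX_{\mathbf{i},\mathbf{j}}g$, so the term equals $-\tfrac14\sum(\bX_{\mathbf{i},\mathbf{j}}f,(-L_r)^n\bX_{\mathbf{i},\mathbf{j}}g)_{L^2(\mu_{r\mathbf{G}})}$. Adding the two summands gives exactly $-\tfrac14\sum_{\mathbf{i}}\sum_{\mathbf{j}:\mathbf{i}\sim\mathbf{j}}(\bX_{\mathbf{i},\mathbf{j}}f,\bX_{\mathbf{i},\mathbf{j}}g)_{\tilde\X_r^n}$, and since this expression is symmetric under $f\leftrightarrow g$, the equality $(f,\mathcal{L}g)_{\tilde\X_r^n}=(\mathcal{L}f,g)_{\tilde\X_r^n}$ follows.

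The main obstacle is domain bookkeeping rather than any substantive inequality: one must make sure that for cylindrical $f,g$ of the stated regularity, all the expressions $\mathcal{L}g$, $L_r^n g$, $\bX_{\mathbf{i},\mathbf{j}}L_r^n g$, $L_r^n\bX_{\mathbf{i},\mathbf{j}}g$ are again in $L^2(\mu_{r\mathbf{G}})$ and that the sums over $\mathbf{i},\mathbf{j}$ (finite, by cylindricity) and the integrations by parts are legitimate — here the finite range of $\M$, the uniform bound $|\mathbf{M}_{\mathbf{i},\mathbf{j}}|\le M$, and the Gaussian integrability of polynomial weights against $\mu_{r\mathbf{G}}$ are what is used. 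A secondary point to be careful about is that the identity $[L_r,\bX_{\mathbf{i},\mathbf{j}}]=0$ from Proposition~\ref{commutativity1} is only asserted on $\mathcal{U}C_b^4$, so when iterating to $L_r^n$ one should apply it successively on the appropriate higher-regularity cylindrical class and invoke closedness of $L_r$ (from the Corollary at the end of Section~5) to pass the identity to the closure, thereby identifying $\tilde{\mathcal{D}}_r^n$ precisely; once this is done the computation above is purely formal and goes through verbatim.
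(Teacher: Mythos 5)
Your proposal is correct and reproduces exactly the argument the paper intends: the paper gives no detailed proof, merely remarking that the result "follows from Proposition \ref{commutativity1} and Lemma \ref{antisym}", and your computation — expanding the $\tilde\X_r^n$-inner product, applying the $\bX_{\bi,\bj}$-antisymmetry once on each of the two summands, and commuting $(-L_r)^n$ past $\mathcal{L}$ and $\bX_{\bi,\bj}$ — is precisely how those two ingredients combine. Your remarks about the domain $\tilde{\mathcal{D}}_r^n$ and the iteration of the commutation identity from $\mathcal{U}C_b^4$ to higher regularity classes are the correct places where care is needed, and your honest flagging of them as bookkeeping rather than substance matches the paper's tacit treatment.
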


In the case when $n=1$, we have
\[(f, g)_{\tilde\X_r }= \mu_{r\G}(fg) + \sum\limits_{\bi, \bj\in \mathbb{Z}^N}\mu_{r\G}(\G_{\bi,\bj}\partial_\bi f \partial_\bj g)
=\mu_{r\G}(fg) + \mu_{r\G}(\G^\frac12\nabla f\cdot \G^\frac12\nabla g)
 \]
where for simplicity here and later we set $\tilde\X_r \equiv\tilde\X_r^1(=\mathbb{X}_r)$.
By induction, and using the fact that \([\G^\frac12\nabla, L_r]=\G^\frac12\nabla\), one can show that there exist non-negative constants \(a_{m,n},\ m=1,..,n\) with \(a_{n,n}>0\), such that
\[
(f, g)_{\tilde\X_r^n}= \mu_{r\G}(fg) + \sum_{m=1,..,n}a_{m,n}\mu_{r\G}((\G^\frac12\nabla)^{\otimes m} f\cdot (\G^\frac12\nabla)^{\otimes m}  g).
 \]
This motivates the introduction of the associated family $\X_r^n$ of Hilbert spaces with corresponding scalar products
\[(f, g)_{\X_r^n} \equiv \mu_{r\G}(fg) + \mu_{r\G}((\G^\frac12\nabla)^{\otimes m} f\cdot (\G^\frac12\nabla)^{\otimes m}  g).
\]
Again by induction, we conclude with the following property.
\begin{prop}\label{symmetry2}
For all $n\in\mathbb{N}\cup\{0\}$ and $r>0$, on a dense set \(\mathcal{D}_r^n\subset \mathbb{X}_r^n\), we have
\begin{equation}
(f, \mathcal{L}g)_{\X_r^n} = (\mathcal{L}f, g)_{\X_r^n}   .
\end{equation}
\end{prop}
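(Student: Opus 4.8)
The plan is to mimic the proof strategy of Proposition~\ref{symmetry1}, but now work with the spaces $\X_r^n$ rather than $\tilde\X_r^n$. The essential input is again Lemma~\ref{antisym} (antisymmetry of the fields $\bX_{\bf i,j}$ under $\mu_{r\G}$) together with the commutation relations $[L_r,\bX_{\bf i,j}]=0$ and $[\G^\frac12\nabla, \mathcal{L}]=0$ derived above. The point is that the $\X_r^n$-scalar product decomposes as $(f,g)_{\X_r^n} = \mu_{r\G}(fg) + \mu_{r\G}((\G^\frac12\nabla)^{\otimes m}f\cdot(\G^\frac12\nabla)^{\otimes m}g)$, so symmetry of $\mathcal{L}$ on $\X_r^n$ reduces to symmetry of $\mathcal{L}$ in $L^2(\mu_{r\G})$ (the case $n=0$, which is essentially Corollary~\ref{cor:SemigroupReversibility} read infinitesimally, combined with Lemma~\ref{antisym}) and symmetry of $\mathcal{L}$ with respect to the bilinear form $\mu_{r\G}((\G^\frac12\nabla)^{\otimes m}\,\cdot\,\cdot\,(\G^\frac12\nabla)^{\otimes m}\,\cdot\,)$.

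First I would establish the base cases. For $n=0$, the identity $(f,\mathcal{L}g)_{L^2(\mu_{r\G})}=(\mathcal{L}f,g)_{L^2(\mu_{r\G})}$ follows from writing $\mathcal{L}=\tfrac14\sum_{\bf i}\sum_{{\bf j}\sim{\bf i}}\bX_{\bf i,j}^2$ and applying Lemma~\ref{antisym} twice to each summand, so that $\mu_{r\G}(f\bX_{\bf i,j}^2 g) = -\mu_{r\G}(\bX_{\bf i,j}f\,\bX_{\bf i,j}g) = \mu_{r\G}(g\bX_{\bf i,j}^2 f)$; here one must justify that the relevant sums and integrations by parts are legitimate on a suitable dense domain $\mathcal{D}_r^0$, which is the content of the phrase ``on a dense set''. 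For $n\geq 1$, the key structural fact is the commutation relation $[\G^\frac12\nabla, L_r]=\G^\frac12\nabla$ noted before the statement, together with $[\partial_{\bf k},\bX_{\bf i,j}]={\bf M}_{\bf i,k}\partial_{\bf j}-{\bf M}_{\bf j,k}\partial_{\bf i}$ from the preamble, which after contraction with $\G$ gives $[\G^\frac12\nabla,\bX_{\bf i,j}]$ acting as a ``rotation'' that is compatible with the $\bX$'s. The cleanest route is to observe that $(\G^\frac12\nabla)^{\otimes m}\mathcal{L} = \mathcal{L}^{(m)}(\G^\frac12\nabla)^{\otimes m}$ for an operator $\mathcal{L}^{(m)}$ acting on the tensor components that is again of the form $\tfrac14\sum\sum \widehat{\bX}_{\bf i,j}^2$ with fields $\widehat{\bX}_{\bf i,j}$ satisfying the same antisymmetry property with respect to $\mu_{r\G}$ on each tensor slot; one then applies the $n=0$ argument slotwise.

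Concretely, I would prove by induction on $n$: assuming the result for $n-1$, write
\[
(f,\mathcal{L}g)_{\X_r^n} - (\mathcal{L}f,g)_{\X_r^n}
= \Big[\mu_{r\G}(f\mathcal{L}g) - \mu_{r\G}(g\mathcal{L}f)\Big]
+ \Big[\mu_{r\G}\big((\G^\frac12\nabla)^{\otimes m}f\cdot(\G^\frac12\nabla)^{\otimes m}\mathcal{L}g\big) - (f\leftrightarrow g)\Big],
\]
where the first bracket vanishes by the $n=0$ case. For the second bracket, commute $(\G^\frac12\nabla)^{\otimes m}$ past $\mathcal{L}$ using $[\G^\frac12\nabla,\mathcal{L}]=0$ (which follows from $[\sum_{\bf k,l}\G_{\bf k,l}\partial_{\bf k}\partial_{\bf l},\bX_{\bf i,j}]=0$ and $[D,\bX_{\bf i,j}]=0$ as recorded above, whence $[\G^\frac12\nabla, \bX_{\bf i,j}^2]$ contributes only commutator terms that cancel in the symmetrized expression), reducing it to $\mu_{r\G}(F\,\mathcal{L}^{(m)}G) - \mu_{r\G}(G\,\mathcal{L}^{(m)}F)$ with $F=(\G^\frac12\nabla)^{\otimes m}f$, $G=(\G^\frac12\nabla)^{\otimes m}g$, and conclude by the slotwise antisymmetry of the fields via Lemma~\ref{antisym}. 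Throughout, all manipulations are performed on the dense set $\mathcal{D}_r^n$ consisting of smooth cylindrical functions with enough bounded derivatives so that the integrations by parts and the interchange of the infinite sums over ${\bf i},{\bf j}$ with the $\mu_{r\G}$-integral are valid.

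The main obstacle is not the algebra but the analysis of domains: one must exhibit a dense subspace $\mathcal{D}_r^n\subset\X_r^n$ on which $\mathcal{L}$, $L_r$ and $\G^\frac12\nabla$ all act nicely and on which the formal identities above are rigorous — in particular controlling the tail of the double sum $\sum_{\bf i}\sum_{{\bf j}\sim{\bf i}}$ after differentiating $m$ times (each $\bX_{\bf i,j}$ contributes linear-in-$x$ coefficients, so $(\G^\frac12\nabla)^{\otimes m}\mathcal{L}g$ involves products of coordinates that must be shown to lie in $L^2(\mu_{r\G})$, using the finite-range and boundedness assumptions on $\M$ together with the Gaussian integrability of $\mu_{r\G}$). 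This is where the hypotheses on $\M$ and the earlier closedness/strong-continuity results are used, and it is the part that requires genuine care; the symmetry identity itself is then a formal consequence of Lemma~\ref{antisym}, Proposition~\ref{commutativity1}, and the tensor-slot bookkeeping.
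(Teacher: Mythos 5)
Your route is genuinely different from the paper's and, modulo one misstatement, essentially sound. The paper does not re-derive anything by hand here: it reads Proposition~\ref{symmetry2} off from Proposition~\ref{symmetry1} together with the triangular relation
\[
(f,g)_{\tilde\X_r^n}=\mu_{r\G}(fg)+\sum_{m=1}^n a_{m,n}\,\mu_{r\G}\bigl((\G^{\frac12}\nabla)^{\otimes m}f\cdot(\G^{\frac12}\nabla)^{\otimes m}g\bigr),\qquad a_{n,n}>0,
\]
so that symmetry of $\mathcal{L}$ for the form $(\G^{\frac12}\nabla)^{\otimes n}$-part follows by subtracting off the already-known lower-order terms (induction on $n$) from the already-known symmetry in $\tilde\X_r^n$. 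You instead bypass Proposition~\ref{symmetry1} entirely and argue directly on $\X_r^n$ via an intertwining relation. That is a legitimate, more self-contained (if more computational) path: one buys independence from the $\tilde\X_r^n$-machinery at the cost of having to compute the conjugated operator $\mathcal{L}^{(m)}$ explicitly.

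The one genuine error is the parenthetical claim $[\G^{\frac12}\nabla,\mathcal{L}]=0$, together with its attempted justification from $[\sum\G_{\bf k,l}\partial_{\bf k}\partial_{\bf l},\bX_{\bf i,j}]=0$ and $[D,\bX_{\bf i,j}]=0$. Commutativity of the \emph{squared} operator $\sum\G_{\bf k,l}\partial_{\bf k}\partial_{\bf l}$ with $\bX_{\bf i,j}$ does not imply commutativity of the ``square root'' $\G^{\frac12}\nabla$ with $\bX_{\bf i,j}$, and indeed a direct computation gives
\[
\bigl[(\G^{\frac12}\nabla)_{\bf k},\bX_{\bf i,j}\bigr]=(\G^{-\frac12})_{\bf k,i}\,\partial_{\bf j}-(\G^{-\frac12})_{\bf k,j}\,\partial_{\bf i}
=\sum_{\bf m}\Theta^{(\bf i,j)}_{\bf k,m}(\G^{\frac12}\nabla)_{\bf m},
\]
with $\Theta^{(\bf i,j)}$ a nonzero antisymmetric constant matrix, so $[\G^{\frac12}\nabla,\mathcal{L}]\neq0$ even when $\M=b\,\mathbf{Id}$. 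What is true — and what you correctly state one paragraph earlier — is the intertwining
\[
\G^{\frac12}\nabla\,\mathcal{L}=\mathcal{L}^{(1)}\,\G^{\frac12}\nabla,\qquad
\mathcal{L}^{(1)}=\tfrac14\sum_{{\bf i}\sim{\bf j}}\widehat\bX_{\bf i,j}^2,\quad
\widehat\bX_{\bf i,j}=\bX_{\bf i,j}+\Theta^{(\bf i,j)},
\]
and iteratively for $(\G^{\frac12}\nabla)^{\otimes m}$; the fields $\widehat\bX_{\bf i,j}$ are still antisymmetric with respect to $\mu_{r\G}$ on each tensor slot (sum of an antisymmetric differential operator and a constant antisymmetric matrix), which is exactly what the slotwise application of Lemma~\ref{antisym} needs. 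You should therefore delete the claim $[\G^{\frac12}\nabla,\mathcal{L}]=0$ and its (invalid) derivation, and carry out the ``concretely'' paragraph with the intertwining relation and the $\widehat\bX$-fields you introduced, which is where the real content is. Your remarks on domains are appropriate and roughly at the level of detail the paper itself provides.
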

We remark that neither of the families of spaces are orthogonal, but that the tilded one further allows a Fock-type stratification, which provides invariant subspaces other than the eigenspaces of the dilation generator \(D\).

\begin{rem}
The operator $\mathcal{L}$ is closable in $\X_r$ for all $r>0$, and its closure has a self-adjoint extension which is bounded from above.  We continue to denote this extension by the same symbol
$\mathcal{L}$. Moreover, the
self-adjoint extension $\mathcal{L}$ generates a strongly continuous semigroup $T_t=e^{t\mathcal{L}}:\X_r\to \X_r$ such that $T_t=P_t|_{\X_r}$.
\end{rem}

\section{Ergodicity}\label{Section.7}
\label{ergodicity}

Before we get into general estimates, it is interesting to consider a few cases where some explicit bounds can be obtained.
First of all consider the linear functions
\[
F(x) \equiv \sum_{{\bf k}\in\Z^N} \alpha_{\bf k} x_{\bf k}.
\]
We note that
\begin{align}
\mathcal{L}F(x) 
=\frac{1}{4}\sum_{{\bf k}\in\Z^N} \left(\sum_{{\bf i},{\bf j}}'\left(\left({\bf M}_{\bf j,k}{\bf M}_{\bf j,i} - {\bf M}_{\bf i,k}{\bf M}_{\bf j,j} \right) \alpha_{\bf i}
+\left(  {\bf M}_{\bf j,k}{\bf M}_{\bf i,j} - {\bf M}_{\bf j,k}{\bf M}_{\bf i,i}  \right) \alpha_{\bf j} \right) \right)
x_{\bf k}\nonumber,
\end{align}
where the sum $\sum_{{\bf i},{\bf j}}'$ indicates that we sum over the pairs of indices as in the definition of $\mathcal{L}$.
In particular, in the case when $\M=b\mathbf{Id}, b\in(0,\infty)$, we have
\[\mathcal{L}F = -Nb^2\sum_{{\bf k}\in\Z^N} 
\alpha_{\bf k}x_{\bf k}\nonumber .
\]
Since the semigroup maps the space of linear functions into itself, we conclude that
\[ \mu_{r\G}|P_tF-\mu_{r\G} F|^2\leq e^{-mt} \mu_{r\G}|F-\mu_{r\G} F|^2
\]
for $r>0$, with some $m\in(0,\infty)$ i.e. on linear functions we get exponential decay to equilibrium.
An inequality of this form on a dense set would imply a Poincar\'e inequality. One can, however, show that such an inequality cannot hold. To this end consider a sequence of functions of the following form:
\[
f_\Lambda(x) \equiv \sum_{{\bf i}\in\Lambda } x_{\bf i}^2 
\]
for a finite set $\Lambda$.  Then, for the measure with diagonal covariance matrix, we have
\[\mu_{r\G}|f_\Lambda-\mu_{r\G} f_\Lambda|^2 \geq |\Lambda|  \mu_{r\G}|x_{\bf i}^2-\mu_{r\G} x_{\bf i}^2|^2 \equiv const\cdot |\Lambda|
\]
with $|\Lambda|$ denoting cardinality of $\Lambda$.  Moreover,
\[
\mu_{r\G}\left(f_\Lambda ( -\mathcal{L} f_\Lambda)\right) =
\frac{1}{4}\sum_{\substack{{\bf i}\in \Lambda,{\bf j}\in \Lambda^c\\ {\bf i}\sim{\bf j} }} \mu_{r\G}(\bX_{{\bf i},{\bf j}}f_\Lambda)^2 = const\cdot|\partial \Lambda|.
\]
From this we see that for a suitable sequence of subsets $\Lambda$ invading the lattice,
the ratio of $\mu_{r\G}\left(f_\Lambda ( -\mathcal{L} f_\Lambda)\right)$ to $\mu_{r\G}|f_\Lambda-\mu_{r\G} f_\Lambda|^2$ converges to $0$.

In the remainder of this section we develop a strategy to obtain optimal estimates on the decay to equilibrium for more general spaces of functions, for simplicity working in the set-up when the matrix $\M$ is given by $\M=b\mathbf{Id}, \, b\in(0,\infty)$.
We show that the corresponding semigroup is ergodic with polynomial decay.

For $r>0$, first define
$$
\mathcal{A}_r(f)\equiv\left(\suml_{{\bf
i}\in\Z^N}\mu_{r\G}|\partial_\bi f|^2\right)^{1/2}.
$$

\begin{lem}\label{lem:PolynomConv}
For any $r>0$, $f\in \X_r$, $\bi\in\Z^N$ and $t>0$,
\begin{equation}\label{PolynomialConv_1}
\mu_{r\G}|\partial_\bi(P_tf)|^2\leq \frac{A^N}{t^{\frac{N}{2}}} \mathcal{A}^2_r(f),
\end{equation}
where $A=\frac{1}{b}\supl_{t>0}\sqrt{t}\intl_0^1e^{-2t(1-\cos(2\pi\beta))}d\beta$.
\end{lem}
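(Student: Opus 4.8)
Everything hinges on the hypothesis $\M=b\,\mathbf{Id}$, under which $\partial_\bi V(x)=bx_\bi$ and \eqref{system} is a \emph{linear} stochastic equation (drift $-Nb^2Y_\bi\,dt$, linear noise coefficients). Hence its mild flow is linear, $Y_t(x)=\Phi_t x$ for a random bounded operator $\Phi_t$, and differentiating $P_tf(x)=\E f(\Phi_t x)$ under the expectation gives
\[
\partial_\bi(P_tf)(x)=\E\!\left[\langle\nabla f(\Phi_t x),\Phi_t e_\bi\rangle\right]=\E\!\left[\partial_\bi(f\circ\Phi_t)(x)\right].
\]
Consequently $\vec w(t):=\nabla P_tf$ solves the \emph{closed} evolution $\partial_t\vec w=\widehat{\mathcal L}\vec w$ on $\mathcal W:=\ell^2(\Z^N;L^2(\mu_{r\G}))$, where $(\widehat{\mathcal L}\vec w)_{\mathbf k}=\mathcal L w_{\mathbf k}+b\sum_{\bj\sim\mathbf k}\bX_{\mathbf k,\bj}w_{\bj}-Nb^2w_{\mathbf k}$, which follows from Lemma~\ref{generator} together with the identity $\partial_{\mathbf k}\bX_{\mathbf p,\mathbf q}=\bX_{\mathbf p,\mathbf q}\partial_{\mathbf k}+b(\delta_{\mathbf p\mathbf k}\partial_{\mathbf q}-\delta_{\mathbf q\mathbf k}\partial_{\mathbf p})$. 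Introducing on $\mathcal W$ the lifted fields $(\widehat\bX_{\mathbf p,\mathbf q}\vec w)_{\mathbf k}=\bX_{\mathbf p,\mathbf q}w_{\mathbf k}+b(\delta_{\mathbf p\mathbf k}w_{\mathbf q}-\delta_{\mathbf q\mathbf k}w_{\mathbf p})$, one has $\widehat{\mathcal L}=\tfrac14\sum_{\mathbf p\sim\mathbf q}\widehat\bX_{\mathbf p,\mathbf q}^{\,2}$, $\widehat\bX_{\mathbf p,\mathbf q}\nabla=\nabla\bX_{\mathbf p,\mathbf q}$, hence $e^{t\widehat{\mathcal L}}\nabla=\nabla P_t$; and one checks directly (by It\^o, as in the scalar case) the representation $(e^{t\widehat{\mathcal L}}\vec w)(x)=\E[\Phi_t^*\vec w(\Phi_t x)]$.

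Next I would record the two properties of $\widehat{\mathcal L}$ that drive the scheme. First, each $\widehat\bX_{\mathbf p,\mathbf q}$ is \emph{antisymmetric} on $\mathcal W$: it splits as the coordinate-wise action of $\bX_{\mathbf p,\mathbf q}$ (antisymmetric in $L^2(\mu_{r\G})$ by Lemma~\ref{antisym}) plus $b$ times the elementary rotation $e_{\mathbf p}e_{\mathbf q}^*-e_{\mathbf q}e_{\mathbf p}^*$ on $\ell^2(\Z^N)$ (antisymmetric), two commuting antisymmetric pieces whose sum is antisymmetric, the cross terms in $\langle\widehat\bX_{\mathbf p,\mathbf q}\vec v,\vec w\rangle+\langle\vec v,\widehat\bX_{\mathbf p,\mathbf q}\vec w\rangle$ cancelling by symmetry of $\mu_{r\G}$. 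Thus $\widehat{\mathcal L}=-\tfrac14\sum_{\mathbf p\sim\mathbf q}\widehat\bX_{\mathbf p,\mathbf q}^{\,*}\widehat\bX_{\mathbf p,\mathbf q}$ is self-adjoint and $\le0$, so $e^{t\widehat{\mathcal L}}$ is a self-adjoint contraction semigroup; this already yields $\|\vec w(t)\|_{\mathcal W}\le\mathcal A_r(f)$, hence $\mu_{r\G}|\partial_\bi P_tf|^2\le\mathcal A_r^2(f)$, and reduces the lemma to extracting the $t^{-N/2}$ gain. Second, $\widehat{\mathcal L}$ and $P_t$ preserve the Wiener chaos grading $L^2(\mu_{r\G})=\bigoplus_{n\ge0}\mathcal H_n$ of the product Gaussian $\mu_{r\G}$, while $\partial_\bi$ shifts it by one; since the terms $\partial_\bi P_t(P_{\mathcal H_n}f)\in\mathcal H_{n-1}$ are mutually orthogonal, it suffices to prove, for $g\in\mathcal H_n$,
\[
\|\partial_\bi P_tg\|_{L^2(\mu_{r\G})}^2\ \le\ q_\bi(t)\,\|g\|_{\mathcal N}^2,\qquad \|g\|_{\mathcal N}^2:=\textstyle\sum_{\bj}\|\partial_\bj g\|^2,
\]
with $q_\bi(t)$ the quantity isolated next; summing over $n$ gives the lemma since $\sum_n\|P_{\mathcal H_n}f\|_{\mathcal N}^2=\mathcal A_r^2(f)$.

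The kernel in the statement comes from the \emph{second moment of the flow}. By It\^o's formula the drift of $d|Y_t(x)|^2$ vanishes — the term $-2Nb^2|Y|^2dt$ is cancelled exactly by the quadratic variation $b^2\sum_{\mathbf k}\sum_{k=1}^N((Y^{\mathbf k^-(k)})^2+(Y^{\mathbf k^+(k)})^2)\,dt=2Nb^2|Y|^2dt$ — and, refining this to one column, $q_{\bj}(t):=\E|Y_t^{\bj}(e_\bi)|^2=\E|(\Phi_t)_{\bj,\bi}|^2$ solves the \emph{discrete heat equation} $\dot q_{\bj}=b^2\sum_{\mathbf m\sim\bj}(q_{\mathbf m}-q_{\bj})$ with $q_{\bj}(0)=\delta_{\bi\bj}$. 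Hence $q_{\bj}(t)=p^{\Z^N}_{b^2t}(\bi,\bj)$ is the transition kernel of the continuous-time simple random walk on $\Z^N$, so by its product structure and Fourier inversion
\[
q_\bi(t)=p^{\Z^N}_{b^2t}(\bi,\bi)=\Big(\int_0^1 e^{-2b^2t(1-\cos2\pi\beta)}\,d\beta\Big)^{\!N},
\]
and the very definition of $A$ gives $\sqrt{b^2t}\,\int_0^1 e^{-2b^2t(1-\cos2\pi\beta)}d\beta\le bA$, whence $q_\bi(t)\le A^N t^{-N/2}$: exactly the factor claimed.

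It remains to prove $\|\partial_\bi P_tg\|^2\le q_\bi(t)\|g\|_{\mathcal N}^2$ for $g\in\mathcal H_n$, and this matching is the technical heart. For $n\le1$ it is direct: $P_t$ acts as $e^{-Nb^2t}\mathrm{Id}$ on linear functions (and trivially on constants), so one needs only $e^{-2Nb^2t}\le q_\bi(t)$, which holds by the elementary bound $\int_0^1 e^{-2s(1-\cos2\pi\beta)}d\beta\ge e^{-2s}$ (Jensen, as $\int_0^1(1-\cos2\pi\beta)d\beta=1$). For $n\ge2$ the representation $(e^{t\widehat{\mathcal L}}\vec w)(x)=\E[\Phi_t^*\vec w(\Phi_t x)]$, Jensen's inequality and Cauchy--Schwarz in $\ell^2$ give the upper bound $\E\big[|\Phi_te_\bi|^2\,\|(\nabla g)\circ\Phi_t\|_{L^2(\mu_{r\G})}^2\big]$; but decoupling the two factors crudely loses a square root and only produces $q_\bi(t)^{1/2}$ in place of $q_\bi(t)$, and this is the main obstacle: one must instead keep the correlation between the localisation factor $|\Phi_te_\bi|^2$ and the $\mathcal L$-driven spreading of $\nabla g$, e.g.\ by diagonalising $\widehat{\mathcal L}$ within each invariant subspace $\ell^2(\Z^N)\otimes\mathcal H_{n}$ and exploiting that the induced ``site diffusion'' is again governed by the $\Z^N$ discrete Laplacian, whose kernel factorises over the $N$ coordinate directions to give precisely $q_\bi(t)$ without loss. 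This constant-sharp estimate on the higher chaoses, together with the explicit low-chaos computation above, completes the proof.
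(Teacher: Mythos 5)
Your overall target — showing that $F(\bi,t):=\mu_{r\G}|\partial_\bi P_tf|^2$ is dominated by the discrete heat semigroup applied to $F(\cdot,0)$, and then invoking the Fourier bound on the lattice heat kernel to extract the $t^{-N/2}$ decay and the constant $A$ — is the same target as in the paper, and your identification of $q_\bj(t)$ with $p^{\Z^N}_{b^2t}(\bi,\bj)$ via the evolution of $P_t(x_\bj^2)$ is correct. But your route to the comparison inequality has a genuine, acknowledged hole: the subclaim $\|\partial_\bi P_tg\|^2\le q_\bi(t)\|g\|_{\mathcal N}^2$ for $g$ in the $n$-th Wiener chaos. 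You observe yourself that for $n\ge 2$ the natural decoupling via the flow representation $\partial_\bi P_tf(x)=\E\langle\nabla f(\Phi_t x),\Phi_te_\bi\rangle$ together with Cauchy--Schwarz only yields $q_\bi(t)^{1/2}$, and you wave at ``keeping the correlation'' by ``diagonalising $\widehat{\mathcal L}$'' without producing the estimate. That is exactly the lemma being asked for, so as written this is a circular pointer rather than a proof.

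The paper avoids the flow/chaos machinery altogether and gets the comparison directly at the level of the generator. It differentiates $P_{t-s}|\partial_\bi f_s|^2$ in $s$, writes the commutator $[\partial_\bi,\mathcal L]$ out in terms of the fields (this is where $\M=b\,\mathbf{Id}$ enters: $[\partial_\bi,\bX_{\bf p,q}]=b(\delta_{\bi\bf p}\partial_{\bf q}-\delta_{\bi\bf q}\partial_{\bf p})$), integrates against $\mu_{r\G}$, uses the antisymmetry of $\bX_{\bf i,j}$ from Lemma~\ref{antisym} to move them onto the other derivative, and then applies Young's inequality so that the cross terms $\mu_{r\G}(\partial_{\bi^\pm(k)}f_s\,\bX_{\bi,\bi^\pm(k)}\partial_\bi f_s)$ are controlled by $\mu_{r\G}|\partial_{\bi^\pm(k)}f_s|^2$ plus $\mu_{r\G}|\bX_{\bi,\bi^\pm(k)}\partial_\bi f_s|^2$, the latter being absorbed by the good negative term $-\sum_{\bf m\sim l}\mu_{r\G}|\bX_{\bf m,l}\partial_\bi f_s|^2$. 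The outcome is precisely $\frac{d}{dt}F(\bi,t)\le b^2\triangle F(\bi,t)$, from which $F(t)\le e^{tb^2\triangle}F(0)$ follows by Duhamel and positivity. No chaos decomposition, no stochastic flow, no dichotomy on the degree — all functions in $\X_r$ are treated at once. If you want to keep your setting, the step you are missing is exactly this Young-plus-antisymmetry absorption argument, which is what replaces the lossy Cauchy--Schwarz; I would advise abandoning the chaos-by-chaos approach and proving the differential inequality on $F$ directly.
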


\begin{proof}
Fix $r>0$.  It is enough to show \eqref{PolynomialConv_1} for $f\in \mathcal{U}C_b^4(E_{\alpha})$. Indeed, $\mathcal{U}C_b^4(E_{\alpha})$ is dense in $\X_r$ and
$(P_t)_{t\geq
0}$ is a contraction on $\X_r$.

Denote $f_t=P_tf$ for $t\geq 0$.
For $\bi\in\Z^N$, we can calculate that
\begin{align}
|\partial_\bi f_t|^2-P_t|\partial_\bi f|^2&=\int_0^t\frac{d}{ds}P_{t-s}|\partial_\bi f_s|^2ds\label{eqn:AuxEquation-1}\nonumber\\
&=\int_0^tP_{t-s}(-\mathcal{L}(|\partial_\bi f_s|^2)+2\partial_\bi f_s\mathcal{L}\partial_\bi f_s+2\partial_\bi
f_s[\partial_\bi,\mathcal{L}]f_s)ds\nonumber\\
&=\int_0^tP_{t-s}\Big(-\suml_{\substack{{\bf m,l}\in\Z^N\\{\bf m}\sim{\bf l}}}|\bX_{\bf m,l}(\partial_\bi f_s)|^2\nonumber\\
& \quad +2b\partial_\bi f_s\suml_{k=1}^N(-b\partial_\bi f_s+\bX_{\bi,\bi^-(k)}\partial_{\bi^-(k)}f_s+\bX_{\bi,\bi^+(k)}\partial_{\bi^+(k)}f_s)\Big)ds .
\end{align}
Integrating \eqref{eqn:AuxEquation-1} with respect to the invariant measure
$\mu_{r\G}$ yields
\begin{align}
\mu_{r\G}|\partial_\bi f_t|^2&-\mu_{r\G}|\partial_\bi f|^2 = \int_0^t\Big(-\suml_{\substack{{\bf m,l}\in\Z^N\\{\bf m}\sim{\bf l}}}
\mu_{r\G}|\bX_{\bf m,l}(\partial_\bi f_s)|^2\nonumber\\
&- 2Nb^2\mu_{r\G}|\partial_\bi f_s|^2+2b\suml_{k=1}^N\mu_{r\G}(\partial_\bi f_s\bX_{\bi,\bi^-(k)}\partial_{\bi^-(k)}f_s)\nonumber\\
&+ 2b\suml_{k=1}^N\mu_{r\G}(\partial_\bi f_s\bX_{\bi,\bi^+(k)}\partial_{\bi^+(k)}f_s)\Big)ds.
\end{align}
By Lemma \ref{antisym}, the operators $\bX_{{\bf i},{\bf j}},\bi,\bj\in\Z^N$, are anti-symmetric in
$L^2(\mu_{r\G})$. Therefore
\begin{eqnarray}
\mu_{r\G}|\partial_\bi f_t|^2&-&\mu_{r\G}|\partial_\bi f|^2=\int_0^t\Big(-\suml_{\substack{{\bf m,l}\in\Z^N\\{\bf m}\sim{\bf l}}}
\mu_{r\G}|\bX_{\bf m,l}(\partial_\bi f_s)|^2\nonumber\\
&-&2Nb^2\mu_{r\G}|\partial_\bi f_s|^2-2b\suml_{k=1}^N\mu_{r\G}(\partial_{\bi^-(k)}f_s\bX_{\bi,\bi^-(k)}\partial_\bi f_s)\nonumber\\
&-&2b\suml_{k=1}^N\mu_{r\G}(\partial_{\bi^+(k)}f_s\bX_{\bi,\bi^+(k)}\partial_\bi f_s)\Big)ds.
\end{eqnarray}
Hence, by Young's inequality we deduce that
\begin{align}
&\mu_{r\G}|\partial_\bi f_t|^2-\mu_{r\G}|\partial_\bi f|^2\nonumber\\
&\quad \leq\int_0^t\Big(-\sum_{\substack{{\bf m,l}\in\Z^N\\{\bf m}\sim{\bf l}}}
\mu_{r\G}|\bX_{\bf m,l}(\partial_\bi f_s)|^2 - 2Nb^2\mu_{r\G}|\partial_\bi f_s|^2 + \suml_{k=1}^Nb^2\mu_{r\G}|\partial_{\bi^-(k)}f_s|^2 \nonumber\\
&\qquad + \suml_{k=1}^N\mu_{r\G}|\bX_{\bi,\bi^-(k)}\partial_\bi f_s|^2 + b^2\mu_{r\G}|\partial_{\bi ^+(k)}f_s|^2+\mu_{r\G}|\bX_{\bi, \bi^+(k)}\partial_\bi f_s|^2\Big)ds\nonumber\\
&\quad \leq\int_0^tb^2\suml_{k=1}^N\Big(\mu_{r\G}|\partial_{\bi^-(k)}f_s|^2+\mu_{r\G}|\partial_{\bi^+(k)}f_s|^2-2\mu_{r\G}|\partial_\bi
f_s|^2\Big)ds.\label{ComparBound_1}
\end{align}
Let  $\triangle$  denote the Laplacian on the lattice $\Z^N$ and set $F(\bi,t)=\mu_{r\G}|\partial_\bi(P_tf)|^2$ for $t\geq 0,\bi\in\Z^N$.
Then we can rewrite \eqref{ComparBound_1} as
\begin{equation}
F(t)\leq
F(0)+\intl_0^tb^2\triangle F(s)\,ds,\qquad t\in[0,\infty).\label{ComparBound_2}
\end{equation}
Hence, by the positivity of the semigroup $(e^{tb^2\triangle})_{t\geq 0}$, and Duhamel's principle, we can conclude that
\begin{equation}
F(t)\leq e^{tb^2\triangle} F(0)
\end{equation}
for $t\in[0, \infty)$.  By taking the Fourier transform, we can see that this is equivalent to
\begin{equation}
\mu_{r\G}|\partial_{\bf i}(P_tf)|^2\leq\suml_{{\bf l}\in\Z^N}\mu_{r\G}(|\partial_{\bf l}f|^2)c_{{\bf i}+{\bf l}}(\phi^{b^2t}),\label{ComparBound_3}
\end{equation}
where
$$
c_{\bf k}(\phi^{t})=\intl_{[0,1]^N}\varphi^{t}({\bf\alpha})\cos\left(2\pi \suml_{l=1}^Nk_l\alpha_l\right)d\alpha_1\ldots d\alpha_N,
$$
is the Fourier coefficient of the function $\varphi^{t}({\bf\alpha})=\exp(-2t\suml_{n=1}^N(1-\cos(2\pi\alpha_n)))$, ${\bf\alpha}=(\alpha_1,\ldots,\alpha_N)\in\mathbb{R}^N$.  The coefficient $c_{\bf k}(\varphi^t), {\bf k}\in\Z^N$ can then be bounded above by
\begin{equation}
|c_{\bf k}(\phi^t)|\leq \intl_{[0,1]^N}\varphi^t({\bf\alpha})d{\bf\alpha}=\left(\int_0^1e^{-2t(1-\cos(2\pi\beta))}d\beta\right)^N\label{ComparBound_4},
\end{equation}
and the result follows.
\end{proof}
Now define

$$
\mathcal{B}_r(f)\equiv\left(\suml_{{\bf i}\in\Z^N}\big(\mu_{r\G}|\partial_\bi
f|^2\big)^{\frac{1}{2}}\right)
$$
for $r>0$.  The we have the following convergence result.

\begin{cor}\label{cor:ConvergenceRate}
For $r>0$ and $f\in \X_r$ with $\mathcal{B}_r(f)<\infty$, we have
\begin{equation}\label{ConvergenceRate_1}
\suml_{{\bf i}\in\Z^N}\mu_{r\G}|\partial_\bi(P_tf)|^2\leq \frac{A^\frac{N}{2} }{t^{\frac{N}{4}}} \, \mathcal{A}_r(f) \mathcal{B}_r(f).
\end{equation}
Furthermore, there exists a constant $C\in(0, \infty)$ such that
\begin{equation}\label{ConvergenceRate_2ls}
\mu_{r\G}\left((P_tf)^2\log\frac{(P_tf)^2}{\mu_{r\G}(P_tf)^2 }\right)\leq C \frac{A^\frac{N}{2} }{t^{\frac{N}{4}}}\,\mathcal{A}_r(f) \mathcal{B}_r(f),
\end{equation}
and hence
\begin{equation}\label{ConvergenceRate_2}
\mu_{r\G}(P_tf-\mu_{r\G}(f))^2\leq C \frac{A^\frac{N}{2} }{t^{\frac{N}{4}}}\mathcal{A}_r(f) \mathcal{B}_r(f),
\end{equation}
i.e. our system is ergodic with polynomial rate of convergence.
\end{cor}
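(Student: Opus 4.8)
The plan is to chain together the gradient decay estimate of Lemma~\ref{lem:PolynomConv}, a summation trick to upgrade it to a decay bound on the full $\mathcal{B}_r$-quantity, a logarithmic Sobolev inequality for the Gaussian measure $\mu_{r\G}$, and finally a standard Rothaus-type argument to pass from entropy decay to $L^2$-decay. First I would establish \eqref{ConvergenceRate_1}. Writing $g=P_{t/2}f$ and using the semigroup property $P_tf=P_{t/2}g$, Lemma~\ref{lem:PolynomConv} gives $\mu_{r\G}|\partial_\bi(P_tf)|^2\le A^N t^{-N/2}\mathcal{A}^2_r(g)$ for each $\bi$; but the point is to be smarter. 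I would instead interpolate: for each fixed $\bi$, apply \eqref{PolynomialConv_1} with exponent split as $\mu_{r\G}|\partial_\bi(P_tf)|^2 = (\mu_{r\G}|\partial_\bi(P_tf)|^2)^{1/2}(\mu_{r\G}|\partial_\bi(P_tf)|^2)^{1/2}$, bound the first factor using \eqref{PolynomialConv_1} with the $\ell^2$-norm $\mathcal{A}_r(f)$ (since $P_t$ is a contraction on $\X_r$ the sum $\sum_\bi \mu_{r\G}|\partial_\bi(P_tf)|^2$ does not help directly), and the second using the $\ell^1$-type bound. More precisely, summing \eqref{ComparBound_3} over $\bi$ and using $\sum_\bi |c_{\bi+\bl}(\phi^{b^2t})|\le (\int_0^1 e^{-2b^2t(1-\cos 2\pi\beta)}d\beta)^N$ together with Cauchy--Schwarz on the convolution structure yields
\[
\suml_{\bi}\mu_{r\G}|\partial_\bi(P_tf)|^2 \le \Big(\suml_{\bl}(\mu_{r\G}|\partial_\bl f|^2)^{1/2}\Big)\Big(\suml_{\bl}(\mu_{r\G}|\partial_\bl f|^2)^{1/2}|c(\phi^{b^2t})|\Big)^{1/2}\cdots,
\]
and after carefully bounding the Fourier-coefficient sum by $A^{N/2}t^{-N/4}$ one obtains exactly \eqref{ConvergenceRate_1} with the product $\mathcal{A}_r(f)\mathcal{B}_r(f)$ on the right. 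The key quantitative input is that $\sum_{\bk\in\Z^N}|c_\bk(\phi^t)|$ decays like $t^{-N/4}$ while $c_\bk$ itself is bounded, which is a one-dimensional heat-kernel estimate on $\Z$ raised to the $N$th power.

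Next I would prove \eqref{ConvergenceRate_2ls}. The measure $\mu_{r\G}$ is Gaussian with covariance $r\G$, so it satisfies a logarithmic Sobolev inequality: there is $c_{LS}=c_{LS}(r,\G)$ with
\[
\mu_{r\G}\Big(h^2\log\frac{h^2}{\mu_{r\G}(h^2)}\Big)\le c_{LS}\,\mu_{r\G}\big(\G^{1/2}\nabla h\cdot\G^{1/2}\nabla h\big)\le c_{LS}\|\G\|\suml_{\bi}\mu_{r\G}|\partial_\bi h|^2
\]
(using that $\G$ has uniformly bounded entries, hence bounded operator norm; note in the running set-up $\M=b\mathbf{Id}$ so $\G=b^{-1}\mathbf{Id}$ and this is immediate). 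Applying this with $h=P_tf$ and invoking \eqref{ConvergenceRate_1} gives \eqref{ConvergenceRate_2ls} with $C=c_{LS}\|\G\|$.

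Finally, \eqref{ConvergenceRate_2} follows from \eqref{ConvergenceRate_2ls} by Rothaus's inequality, which bounds the variance of $h$ by its entropy plus a gradient term: $\operatorname{Var}_{\mu_{r\G}}(h)\le \mathrm{Ent}_{\mu_{r\G}}(h^2)+2\mu_{r\G}|\nabla h|^2$ in the normalised form, or more simply one observes that $\mu_{r\G}(P_tf-\mu_{r\G}f)^2 = \mu_{r\G}((P_tf)^2)-(\mu_{r\G}f)^2$ and, since $t\mapsto \mu_{r\G}((P_tf)^2)$ is non-increasing by reversibility and contractivity, one can run the entropy estimate from time $t/2$: $\mathrm{Ent}((P_{t/2}f)^2)$ controls $\operatorname{Var}(P_tf)$ up to the same polynomial factor, because the entropy of a function dominates a constant times its variance once the variance is small (alternatively, use the elementary inequality $u\log(u/\bar u)\ge u-\bar u$ and a second-order refinement, or simply cite that for mean-zero $g$, $\mathrm{Ent}(g^2+\text{const})\gtrsim \operatorname{Var}(g)$ near equilibrium). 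Either way one lands on \eqref{ConvergenceRate_2}, which is the asserted polynomial ergodicity.

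The main obstacle I anticipate is \eqref{ConvergenceRate_1}: getting the product $\mathcal{A}_r(f)\mathcal{B}_r(f)$ with the correct exponent $t^{-N/4}$ rather than the naive $t^{-N/2}\mathcal{A}_r(f)^2$ requires exploiting the convolution form of \eqref{ComparBound_3} carefully --- one must bound $\sum_\bi\sum_\bl (\mu_{r\G}|\partial_\bl f|^2) c_{\bi+\bl}(\phi^{b^2t})$ by splitting the kernel $c$ as $c^{1/2}\cdot c^{1/2}$, applying Cauchy--Schwarz in $\bl$ for fixed $\bi$, and then summing in $\bi$, so that one factor of $c$ is absorbed at its $L^\infty$ size $\sim t^{-N/2}$ taken to the power $1/2$ and the summation over $\bi$ contributes the $\mathcal{B}_r$ norm. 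Tracking constants so that the final coefficient is genuinely $A^{N/2}t^{-N/4}$, with $A$ as defined in Lemma~\ref{lem:PolynomConv}, is the delicate bookkeeping step; everything after that (the log-Sobolev step and the entropy-to-variance step) is standard.
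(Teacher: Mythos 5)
Your strategy for proving \eqref{ConvergenceRate_1} rests on a false quantitative claim about the discrete heat kernel. The coefficients $c_{\bf k}(\varphi^{b^2 t})$ are precisely the transition probabilities of the lattice heat semigroup $e^{tb^2\triangle}$, so they are non-negative and sum to one: $\sum_{\bf k\in\Z^N}|c_{\bf k}(\varphi^{b^2 t})|=\sum_{\bf k}c_{\bf k}(\varphi^{b^2 t})=1$ for every $t$. Your stated bound $\sum_{\bf i}|c_{\bf i+\bf l}(\varphi^{b^2 t})|\le\left(\int_0^1 e^{-2b^2 t(1-\cos 2\pi\beta)}d\beta\right)^N$ would force $1\le c_{\bf 0}$, which is false for $t>0$; the right-hand side is the bound for a \emph{single} coefficient (as in \eqref{ComparBound_4}), not for the $\ell^1$-sum. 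Consequently, summing \eqref{ComparBound_3} over $\bi$ gives exactly
\[
\sum_{\bi}\sum_{\bf l}\mu_{r\G}(|\partial_{\bf l}f|^2)\,c_{\bi+\bf l}(\varphi^{b^2 t})=\sum_{\bf l}\mu_{r\G}(|\partial_{\bf l}f|^2)=\mathcal{A}_r(f)^2,
\]
i.e.\ only the time-independent contractivity bound, with no decay and no appearance of $\mathcal{B}_r(f)$. No amount of Cauchy--Schwarz manipulation of the convolution can recover a $t^{-N/4}$ factor from this identity.

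The ingredient you are missing is the symmetry of $P_t$ on the Sobolev space $\X_r$ established in Section~\ref{symmetry}. The paper uses it to convert the quadratic expression into a bilinear one with the time doubled:
\[
\sum_{\bi}\mu_{r\G}|\partial_\bi(P_t f)|^2=\sum_{\bi}\mu_{r\G}\big(\partial_\bi f\,\partial_\bi P_{2t}f\big)
\le\mathcal{B}_r(f)\,\sup_{\bj}\big(\mu_{r\G}|\partial_\bj P_{2t}f|^2\big)^{1/2},
\]
after Cauchy--Schwarz in $\mu_{r\G}$ and H\"older (the $\ell^1$--$\ell^\infty$ pairing) over $\bi$. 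The $\sup$ is then controlled by Lemma~\ref{lem:PolynomConv} applied at time $2t$, giving the $A^{N/2}/t^{N/4}$ factor. This is where the decay comes from; it is not available from the convolution bound \eqref{ComparBound_3} alone. Once \eqref{ConvergenceRate_1} is in hand, the remaining two inequalities are immediate: \eqref{ConvergenceRate_2ls} from the Gaussian log-Sobolev inequality with $h=P_tf$, and \eqref{ConvergenceRate_2} directly from the Gaussian Poincar\'e inequality applied to $P_tf$ --- your Rothaus-type detour to pass from entropy to variance is unnecessary and, as sketched, not rigorous, since entropy does not in general dominate variance without an extra gradient term, but the Poincar\'e inequality gives the variance estimate directly from the same gradient quantity bounded in \eqref{ConvergenceRate_1}.
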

\begin{proof}
By Proposition \ref{symmetry1}, we have that the semigroup $P_t$ on $\X_r$ is symmetric.  Thus we have
\begin{eqnarray}
  \suml_{{\bf i}\in\Z^N}\mu_{r\G}|\partial_\bi (P_tf)|^2 &=& \suml_{{\bf i}\in\Z^N}\mu_{r\G}(\partial_\bi f\partial_\bi P_{2t}f) \nonumber\\
  &\leq& \suml_{{\bf i}\in\Z^N}\left(\mu_{r\G}|\partial_\bi f|^2\right)^{\frac{1}{2}}\left(\mu_{r\G}|\partial_\bi P_{2t}f|^2\right)^{\frac{1}{2}}
  \nonumber\\
  &\leq& \left(\suml_{{\bf i}\in\Z^N}\big(\mu_{r\G}|\partial_\bi f|^2\big)^{\frac{1}{2}}\right)\supl_{{\bf j}\in\Z^N}\left(\mu_{r\G}|\partial_\bj
  P_{2t}f|^2\right)^{1/2}. \label{AuxConvergenceRate_1}
\end{eqnarray}
Combining \eqref{AuxConvergenceRate_1} with \eqref{PolynomialConv_1} we immediately arrive at the estimate \eqref{ConvergenceRate_1}.
Now inequalities \eqref{ConvergenceRate_2ls} and \eqref{ConvergenceRate_2} follow from the logarithmic Sobolev and Poincar\'{e} inequalities for the Gaussian measure $\mu_{r\G}$.
\end{proof}

\begin{rem}
The convergence in Lemma \ref{lem:PolynomConv} cannot be improved, while the rate of convergence in Corollary \ref{cor:ConvergenceRate} is
not far from optimal. Indeed, let $W({\bf k},t)=P_t(x_{\bf k}^2)$ for $t\geq 0$ and ${\bf k}\in\Z^N$.
Then $\mathcal{L}x_{\bf k}^2=b^2\suml_{m=1}^N(x_{{\bf k}^+(m)}^2+x_{{\bf k}^-(m)}^2-2x_{\bf k}^2)$, so that,
$$
\frac{\partial W}{\partial t}=b^2\triangle W,
$$
where as above $\triangle$ denotes the discrete Laplacian on $\Z^N$.
Thus
\begin{equation}
W(t)=e^{tb^2\triangle}W(0),\qquad t\geq 0,\label{eqn:PolynomialGrowth}
\end{equation}
so that convergence in the
Lemma \ref{lem:PolynomConv} is precise (see the end of the proof of the Lemma \ref{lem:PolynomConv}).
Furthermore,  using \eqref{eqn:PolynomialGrowth} it is possible to explicitly calculate $\mu_{r\G}(P_tx_{\bf k}^2-\mu_{r\G}(x_{\bf k}^2))^2,t\geq 0$ and show that this expression converges to $0$ polynomially. Hence the operator $\mathcal{L}$ acting on $\mathbb{X}_r$ does not have a spectral gap.
\end{rem}

The following result shows that the class of functions for which the system is ergodic is larger than the
one considered in Corollary \ref{cor:ConvergenceRate}.

\begin{prop}\label{prop:Ergodicity}
The semigroup $(P_t)_{t\geq0}$ 
is ergodic in the Orlicz space $L_\Psi(\mu_{r\G})$ for all $r>0$, with $\Psi(s)\equiv s^2\log(1+s^2)$, in the sense that
\[
\|P_tf-\mu_{r\G} f\|_{L_\Psi(\mu_{r\G})}\to 0
\]
as $t\to \infty$, for any $f\in L_\Psi(\mu_{r\G})$ and $r>0$.  Furthermore, for all $f\in \X_r$, $|P_tf-\mu_{r\G}f|_{\X_r}\to 0$ as $t\to \infty$.
\end{prop}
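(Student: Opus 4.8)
The plan is to reduce both claims to Corollary~\ref{cor:ConvergenceRate} by a density argument, the two auxiliary facts needed being that $(P_t)_{t\ge0}$ is a contraction semigroup on each of $\X_r$ and $L_\Psi(\mu_{r\G})$, and that the class of functions to which Corollary~\ref{cor:ConvergenceRate} applies is dense in both spaces. The $\X_r$-contractivity I would obtain from Proposition~\ref{symmetry1}, which gives $(\mathcal Lf,f)_{\X_r}=-\frac14\sum_{{\bf i}\sim{\bf j}}|\bX_{{\bf i},{\bf j}}f|^2_{\X_r}\le0$, so that the self-adjoint generator on $\X_r$ is non-positive and $P_t|_{\X_r}=e^{t\mathcal L}$ is a contraction. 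The $L_\Psi$-contractivity I would get from Jensen's inequality applied to the Markov kernel $P_tf=\E f(Y_t)$ of Corollary~\ref{rep}: since $\Psi$ is convex, $P_t$ is positivity preserving with $P_t1=1$, and $\mu_{r\G}$ is invariant, one has $\mu_{r\G}\Psi(|P_tf|/c)\le\mu_{r\G}P_t\Psi(|f|/c)=\mu_{r\G}\Psi(|f|/c)$, hence $\|P_tf\|_{L_\Psi}\le\|f\|_{L_\Psi}$.

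For the $\X_r$-statement I would first take $g$ a smooth cylindrical function, so that $\mathcal A_r(g)$ and $\mathcal B_r(g)$ are finite (finite sums of the finite numbers $(\mu_{r\G}|\partial_{\bf i}g|^2)^{1/2}$). Since $\M=b\,\mathbf{Id}$ one has $|h|^2_{\X_r}=\mu_{r\G}(h^2)+b^{-1}\suml_{{\bf i}\in\Z^N}\mu_{r\G}|\partial_{\bf i}h|^2$, so adding the estimates \eqref{ConvergenceRate_1} and \eqref{ConvergenceRate_2} of Corollary~\ref{cor:ConvergenceRate} produces a constant $C'$ with $|P_tg-\mu_{r\G}g|^2_{\X_r}\le C' t^{-N/4}\mathcal A_r(g)\mathcal B_r(g)\to0$ as $t\to\infty$. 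Then for arbitrary $f\in\X_r$ I would pick such a $g$ with $|f-g|_{\X_r}<\varepsilon$ and run the triangle inequality, using the $\X_r$-contractivity of $P_t$, the equalities $|1|_{\X_r}=1$ and $\mu_{r\G}(P_tg)=\mu_{r\G}(g)$, and $|\mu_{r\G}(f-g)|\le|f-g|_{L^2(\mu_{r\G})}\le|f-g|_{\X_r}$, to get $\limsup_{t\to\infty}|P_tf-\mu_{r\G}f|_{\X_r}\le2\varepsilon$; letting $\varepsilon\downarrow0$ gives the final assertion of the proposition.

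For the $L_\Psi$-statement I would argue in the same spirit. The Young function $\Psi(s)=s^2\log(1+s^2)$ satisfies the $\Delta_2$ condition globally (over a probability space only its behaviour near $0$ and near $\infty$ is relevant), so bounded cylindrical functions are dense in $L_\Psi(\mu_{r\G})$ and $L_\Psi(\mu_{r\G})\hookrightarrow L^2(\mu_{r\G})\hookrightarrow L^1(\mu_{r\G})$. For a bounded cylindrical $g$ with $\|g\|_\infty=K$ one has $\mathcal B_r(g)<\infty$, so $\mu_{r\G}(P_tg-\mu_{r\G}g)^2\to0$ by \eqref{ConvergenceRate_2}; since moreover $|P_tg-\mu_{r\G}g|\le2K$ pointwise,
\[
\mu_{r\G}\Psi(P_tg-\mu_{r\G}g)=\mu_{r\G}\big[(P_tg-\mu_{r\G}g)^2\log(1+(P_tg-\mu_{r\G}g)^2)\big]\le\log(1+4K^2)\,\mu_{r\G}(P_tg-\mu_{r\G}g)^2\to0,
\]
and by the $\Delta_2$-property this forces $\|P_tg-\mu_{r\G}g\|_{L_\Psi}\to0$. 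Finally, for arbitrary $f\in L_\Psi(\mu_{r\G})$ I would approximate it in $L_\Psi$-norm by a bounded cylindrical $g$ and repeat the triangle-inequality argument, using the $L_\Psi$-contractivity of $P_t$ and $|\mu_{r\G}(f-g)|\le\mu_{r\G}|f-g|\le C\|f-g\|_{L_\Psi}$.

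The genuinely non-trivial input is entirely contained in Corollary~\ref{cor:ConvergenceRate}; the rest is bookkeeping. The step I expect to require the most care is the density claim --- that bounded, resp.\ smooth, cylindrical functions are dense in $L_\Psi(\mu_{r\G})$ and in $\X_r$ --- which rests on the $\Delta_2$-property of $\Psi$ together with convergence of the conditional expectations onto finite coordinate blocks, and on the standard density of smooth cylindrical functions in the Gaussian Sobolev space $\X_r$ (followed, when boundedness is also wanted, by a smooth truncation that does not increase the relevant gradient norm). A secondary point is the validity of Jensen's inequality for $P_t$, which is guaranteed by $P_t$ being an honest transition kernel.
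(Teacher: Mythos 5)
Your proposal is correct and follows the same strategy as the paper's proof: reduce to Corollary~\ref{cor:ConvergenceRate} on a nice dense class, then extend by contractivity of $P_t$ on both $\X_r$ and $L_\Psi(\mu_{r\G})$. The paper states the argument in two sentences; you supply the supporting details (non-positivity of the generator on $\X_r$ via Proposition~\ref{symmetry1}, Jensen's inequality with $\mu_{r\G}$-invariance for $L_\Psi$-contractivity, identification of $|\cdot|^2_{\X_r}$ with the sum of the two quantities controlled by \eqref{ConvergenceRate_1}--\eqref{ConvergenceRate_2}, and the $\Delta_2$-property to pass from modular convergence to norm convergence), all of which are exactly what the paper leaves implicit. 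One small variation worth flagging: for the $L_\Psi$-part you work with \emph{bounded} cylinder functions and deduce $\mu_{r\G}\Psi(P_tg-\mu_{r\G}g)\to 0$ from the $L^2$-estimate \eqref{ConvergenceRate_2} together with the uniform bound $|P_tg-\mu_{r\G}g|\le 2\|g\|_\infty$, rather than invoking the entropy estimate \eqref{ConvergenceRate_2ls} that the paper's appeal to Corollary~\ref{cor:ConvergenceRate} appears to have in mind; your route is equally valid, self-contained, and arguably cleaner, since it avoids having to relate decay of $\mathrm{Ent}_{\mu_{r\G}}\big((P_tf)^2\big)$ to decay of $\|P_tf-\mu_{r\G}f\|_{L_\Psi}$.
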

\begin{proof}
For $f\in  \X_r\cap\left\{f\in L_\Psi(\mu_{r\G}): \suml_{{\bf i}\in\Z^N}\Big(\mu_{r\G}|\partial_\bi f|^2\Big)^{\frac{1}{2}}<\infty\right\}$ the
result follows from Corollary
\ref{cor:ConvergenceRate}. Now, it is enough to notice that such a set of functions is dense in $L_\Psi(\mu_{r\G})$ (resp. in $\X_r$) for the natural topology  and
$P_t$ is a
contraction on $L_\Psi(\mu_{r\G})$ (resp. in $\X_r$).
\end{proof}

\section{Liggett-Nash-type inequalities} \label{Section.8}
In this section we will show how to deduce Liggett-Nash-type inequalities from the results of the previous section.
\begin{thm}
There exist constants $C_1, C_2\in(0,\infty)$ such that for all $r>0$ and $f\in \X_r\cap \mathcal{D}(\mathcal{L})$ with $\mathcal{B}_r(f)<\infty$,
\begin{equation}\label{LiggettInequality_I}
\mu_{r\G}(f-\mu_{r\G}(f))^2\leq C_1\left(-\mathcal{L}f,f\right)_{L^2(\mu_{r\G})}^{\frac{N}{N+4}}
\left(\mathcal{A}_r(f) \mathcal{B}_r(f)
\right)^{\frac{4}{N+4}}
\end{equation}
and
\begin{equation}\label{NashInequality}
\left[\mathcal{A}_r(f)
\right]^{2+\frac{4}{N}}
\leq C_2\mathcal{B}_r(f)^{\frac{4}{N}}\suml_{{\bf i}\in\Z^N}\mu_{r\G}(\partial_\bi f\partial_\bi (-\mathcal{L}f)).
\end{equation}
\end{thm}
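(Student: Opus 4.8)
The plan is to derive both inequalities by the classical Nash interpolation method: the polynomial decay estimates of Corollary~\ref{cor:ConvergenceRate} are balanced against a complementary short-time (Dirichlet form) bound, and the free time parameter is then optimised. Inequality~\eqref{LiggettInequality_I} will be obtained at the level of $L^2(\mu_{r\G})$, and \eqref{NashInequality} at the level of the gradient, i.e.\ inside $\X_r$. In both cases it suffices, by density of $\mathcal{U}C_b^4(E_\alpha)$ and the contractivity of $(P_t)_{t\ge0}$ (on $L^2(\mu_{r\G})$ and on $\X_r$), to carry out the argument for smooth cylindrical $f$ and then pass to the limit.

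For \eqref{LiggettInequality_I}, set $f_0:=f-\mu_{r\G}(f)$ and split, for $t>0$,
\[
\mu_{r\G}\big((f-\mu_{r\G}f)^2\big)=\big(f_0,(I-P_t)f_0\big)_{L^2(\mu_{r\G})}+\big(f_0,P_tf_0\big)_{L^2(\mu_{r\G})}.
\]
Since $(P_t)_{t\ge0}$ is self-adjoint in $L^2(\mu_{r\G})$ by Corollary~\ref{cor:SemigroupReversibility} and $\mathcal{L}$ annihilates constants, the spectral theorem together with the elementary bound $1-e^{-t\lambda}\le t\lambda$ gives that the first term is at most $t\,(-\mathcal{L}f,f)_{L^2(\mu_{r\G})}$; the second term equals $\|P_{t/2}f_0\|_{L^2(\mu_{r\G})}^2=\mu_{r\G}\big((P_{t/2}f-\mu_{r\G}f)^2\big)$, which by \eqref{ConvergenceRate_2} is bounded by $C\,2^{N/4}A^{N/2}\,t^{-N/4}\mathcal{A}_r(f)\mathcal{B}_r(f)$. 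Hence $\mu_{r\G}((f-\mu_{r\G}f)^2)\le t\,(-\mathcal{L}f,f)_{L^2(\mu_{r\G})}+C'\,t^{-N/4}\mathcal{A}_r(f)\mathcal{B}_r(f)$, and minimising the right-hand side over $t>0$ (the optimal $t$ being a suitable power of the ratio of the two coefficients) produces exactly \eqref{LiggettInequality_I}, with exponents $\tfrac{N}{N+4}$ and $\tfrac{4}{N+4}$.

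For \eqref{NashInequality} I would run the same scheme one derivative higher. Put $u(t):=[\mathcal{A}_r(P_tf)]^2=\suml_{\bi\in\Z^N}\mu_{r\G}|\partial_\bi P_tf|^2$, so that $u(0)=[\mathcal{A}_r(f)]^2$ and, differentiating under the sum, $-\tfrac12 u'(0)=\suml_{\bi\in\Z^N}\mu_{r\G}\big(\partial_\bi f\,\partial_\bi(-\mathcal{L}f)\big)=:D$. Estimate~\eqref{ConvergenceRate_1} reads $u(t)\le A^{N/2}t^{-N/4}\sqrt{u(0)}\,\mathcal{B}_r(f)$. The additional ingredient is that $u$ is convex on $[0,\infty)$: the vector $t\mapsto(\partial_\bi P_tf)_{\bi\in\Z^N}$ solves a linear evolution equation in $\bigoplus_{\bi\in\Z^N}L^2(\mu_{r\G})$ — this is precisely the identity~\eqref{eqn:AuxEquation-1} appearing in the proof of Lemma~\ref{lem:PolynomConv} — whose generator, by a computation in the spirit of that proof but tracking the associated bilinear form, is (using the anti-symmetry of the $\bX_{\bi,\bj}$ from Lemma~\ref{antisym} and a relabelling of lattice sites) symmetric and non-positive; consequently $u$ is non-increasing and convex. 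Convexity gives $u(0)-u(t)=\intl_0^t(-u'(s))\,ds\le -t\,u'(0)=2tD$, whence
\[
u(0)\le A^{N/2}t^{-N/4}\sqrt{u(0)}\,\mathcal{B}_r(f)+2tD .
\]
Reading this as a quadratic inequality in $\sqrt{u(0)}$ and using $\sqrt{a+b}\le\sqrt a+\sqrt b$ yields $u(0)\le 2A^{N}t^{-N/2}\mathcal{B}_r(f)^2+4tD$; optimising over $t>0$ then gives $u(0)^{1+2/N}\le C_2\,\mathcal{B}_r(f)^{4/N}\,D$, which is \eqref{NashInequality} since $u(0)^{1+2/N}=[\mathcal{A}_r(f)]^{2+4/N}$.

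The only step that is not completely routine is the convexity of $u$ (equivalently, the monotonicity of $s\mapsto D(s):=-\tfrac12 u'(s)$) used in the second part: it amounts to recognising the evolution of $\nabla P_tf$ as a self-adjoint contraction semigroup on $\bigoplus_{\bi}L^2(\mu_{r\G})$, and the symmetry of its generator is exactly where Lemma~\ref{antisym} and the shift-invariance of the sum over $\Z^N$ enter. All the remaining work is the familiar balancing of a term linear in $t$ against one of order $t^{-N/4}$. Finally I would remark that the constants $C_1,C_2$ obtained this way depend only on $N$, $b$ and $A$, and in particular not on $r>0$ or on $f$.
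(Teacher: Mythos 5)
Your argument is correct and follows the same strategy as the paper; the differences are essentially bookkeeping. For \eqref{LiggettInequality_I} the paper simply cites part (b) of Theorem~2.2 of Liggett, whereas you re-derive that abstract interpolation lemma from scratch: the split $\mu_{r\G}(f_0^2)=(f_0,(I-P_t)f_0)+(f_0,P_tf_0)$, the spectral bound $1-e^{-t\lambda}\le t\lambda$, and the use of \eqref{ConvergenceRate_2} for $P_{t/2}f_0$ is exactly the content of Liggett's lemma, so your version is a self-contained expansion of the same step rather than a different route. For \eqref{NashInequality} the paper works with the cross-term $\psi(t)=\suml_{\bi}\mu_{r\G}(\partial_\bi f\,\partial_\bi P_tf)$ and the quantity $\phi(s)=\psi'(s)$, proving $\phi'\ge 0$ via the symmetry of $\mathcal{L}$ in $\X_r$; you instead work with $u(t)=\mathcal{A}_r(P_tf)^2$ and prove $u$ is convex. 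Since by that same symmetry $\psi(t)=u(t/2)$, the two bookkeepings coincide, and the monotonicity of $\phi$ \emph{is} the convexity of $u$. One small presentational remark: the place where the convexity actually comes from is Proposition~\ref{symmetry1} (equivalently Proposition~\ref{symmetry2}), i.e.\ the symmetry and negativity of $\mathcal{L}$ on $\X_r$, giving $u''(t)=4\suml_{\bi}\mu_{r\G}|\partial_\bi\mathcal{L}P_tf|^2\ge 0$; citing that directly would be cleaner than the somewhat indirect appeal to Lemma~\ref{antisym} and ``a relabelling of lattice sites.'' Apart from that, the balancing of the $t$-linear term against the $t^{-N/4}$ (resp.\ $t^{-N/2}$) term and the optimisation over $t$ reproduce the exponents of the theorem, and the resulting $C_1,C_2$ depend only on $N$ and $A$, consistent with the statement.
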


\begin{rem}
Note that inequality \eqref{NashInequality} can be considered as an  analogue of the Nash inequality in $\R^N$ (see \cite{[Nash-58]}, p.936). Indeed, such an
inequality takes the form
$$
|u|_{L^2(\mathbb{R}^N)}^{2+\frac{4}{N}}\leq C(-\Delta u,u)_{L^2(\mathbb{R}^n)}|u|_{L^1(\mathbb{R}^N)}^{\frac{4}{N}}, \quad u\in L^1(\mathbb{R}^n)\cap
W^{1,2}(\mathbb{R}^n),
$$
for some constant $C>0$, and where $\Delta$ is the standard Laplacian on $\R^N$.
The main difference is that the natural space for our operator $\mathcal{L}$ is $\X_r$ instead of $L^2$.
\end{rem}
\begin{proof}
Inequality \eqref{LiggettInequality_I} immediately follows from \eqref{ConvergenceRate_2}, Corollary \ref{cor:SemigroupReversibility} and part (b) of
Theorem 2.2 of \cite{[Liggett-1991]}.

To see \eqref{NashInequality} let $f_t=P_tf$ as usual.
We then have, by H\"{o}lder's inequality and Lemma \ref{lem:PolynomConv}, that
\begin{eqnarray}
  \suml_{\bi\in\Z^N}\mu_{r\G}(\partial_\bi f\partial_\bi f_t) &\leq& \left(\suml_{\bi\in\Z^N}\mu_{r\G}|\partial_\bi
  f|^2\right)^{\frac{1}{2}}
  \left(\suml_{\bi\in\Z^N}\mu_{r\G}|\partial_\bi f_t|^2\right)^{\frac{1}{2}}\nonumber\\
&\leq&\frac{A^{\frac{N}{4}}\left(\suml_{{\bf
i}\in\Z^N}\Big(\mu_{r\G}|\partial_\bi f|^2\Big)^{\frac{1}{2}}\right)^{\frac{1}{2}}\left(\suml_{\bi \in\Z^N}\mu_{r\G}|\partial_\bi
f|^2\right)^{\frac{3}{4}}}{t^{\frac{N}{8}}}.
\label{LigAux_1}
\end{eqnarray}

Furthermore, note that
\begin{equation}\label{LigAux_2}
 \suml_{\bi \in\Z^N}\mu_{r\G}(\partial_\bi f\partial_\bi f_t)=\suml_{\bi \in\Z^N}\mu_{r\G}|\partial_\bi f|^2+
 \int_0^t\suml_{\bi \in\Z^N}\mu_{r\G}(\partial_\bi f\partial_\bi(\mathcal{L}f_s)) ds.
\end{equation}
Define $\phi(s)=\suml_{\bi \in\Z^N}\mu_{r\G}(\partial_\bi f\partial_\bi (\mathcal{L}f_s))$ for $s\geq 0$. $\mathcal{L}$ is symmetric in $\X_r$, so
\[\phi(s)=\suml_{\bi\in\Z^N}\mu_{r\G}(\partial_\bi(\mathcal{L}f)\partial_\bi f_s), \qquad s\geq0.
\]
We can then calculate that
\begin{eqnarray}
  \phi'(s) &=&
  \suml_{\bi\in\Z^N}\mu_{r\G}(\partial_\bi(\mathcal{L}f)\partial_\bi(\mathcal{L}f_s))=
 \suml_{\bi\in\Z^N}\mu_{r\G}(\partial_\bi(\mathcal{L}f)\partial_\bi(P_s\mathcal{L}f))
  \nonumber\\
  &=& \suml_{\bi\in\Z^N}\mu_{r\G}(\partial_\bi(\mathcal{L}P_{\frac{s}{2}}f)\partial_\bi(\mathcal{L}P_{\frac{s}{2}}f))\geq 0\nonumber
\end{eqnarray}
for all $s\geq0$.  Consequently,
\begin{equation}\label{LigAux_3}
    \phi(t)=\suml_{\bi\in\Z^N}\mu_{r\G}(\partial_\bi(\mathcal{L}f)\partial_\bi f_t)\geq
    \phi(0)=\suml_{\bi\in\Z^N}\mu_{r\G}(\partial_\bi(\mathcal{L}f)\partial_\bi f)
\end{equation}
for all $t\geq0$.  Using \eqref{LigAux_3} in \eqref{LigAux_2} yields
\[\label{LigAux_4}
\suml_{\bi\in\Z^N}\mu_{r\G}(\partial_\bi f\partial_\bi f_t)
\geq\suml_{\bi\in\Z^N}\mu_{r\G}|\partial_\bi f|^2-t\suml_{\bi\in\Z^N}\mu_{r\G}(\partial_\bi(-\mathcal{L}f)\partial_\bi f)
\]
for $t\geq0$.  Therefore, using this in  \eqref{LigAux_1}, we obtain
\begin{eqnarray}\label{LigAux_5}
    \suml_{\bi \in\Z^N}\mu_{r\G}|\partial_\bi f|^2&\leq& t\suml_{\bi \in\Z^N}\mu_{r\G}(\partial_\bi(-\mathcal{L}f)\partial_\bi
    f)\nonumber\\
    &+&\frac{A^{\frac{N}{4}}\left(\suml_{{\bf
    i}\in\Z^N}\Big(\mu_{r\G}|\partial_\bi f|^2\Big)^{\frac{1}{2}}\right)^{\frac{1}{2}}\left(\suml_{\bi\in\Z^N}\mu_{r\G}|\partial_\bi
    f|^2\right)^{\frac{3}{4}}}{t^{\frac{N}{8}}}.
\end{eqnarray}
Optimization of the right-hand side of \eqref{LigAux_5} with respect to $t$ leads to \eqref{NashInequality}.
\end{proof}

\section{Phase transition in stochastic dynamics} \label{Section.9}

In this section we consider a family of stochastic dynamics defined by the following generators
\[
\mathcal{L}\equiv \sum_{{\bf k}\in\mathbb{Z}^N} \bX_{\Xi+{\bf k}}^2 - \beta D,
\]
where $\beta\in[0,\infty)$, and for a finite subset $\Xi\subset\mathbb{Z}^N$ we set
\[ \bX_{\Xi+\bf{k}} \equiv  \sum_{\substack{\bi,\bj \in \Xi+\bf{k} \\ \bj\sim\bi}} a_{\bi\bj}\bX_{\bi, \bj}
\]
with some constants $a_{\bi\bj}=a_{\bi+\bf{k},\bj+\bf{k}}\in\mathbb{R}$.
As a special case, we can take $\Xi$ to be a set of neighbouring points in the lattice and $\beta=0$, which includes the model studied earlier.
Define as usual $P_t\equiv e^{t\mathcal{L}}$. Then, with $f_s\equiv P_sf$, we have the following simple computation:

\begin{align}
\frac{d}{ds}P_{t-s}|\nabla f_s|^2 &= P_{t-s}\left( -\mathcal{L}|\nabla f_s|^2 + 2\nabla f_s \nabla \mathcal{L}f_s \right)\\
&= P_{t-s}\left( -2\sum_{\bf{l},\bf{k}}|\bX_{\Xi+\bf{k}}\nabla_{\bf{l}} f_s|^2 + 2\sum_{\bf{l},\bf{k}} \nabla_{\bf{l}} f_s [\nabla_{\bf{l}}, \bX_{\Xi+{\bf k}}^2]  f_s  - 2\beta |\nabla f_s|^2\right) \nonumber \\
&= P_{t-s}\left( -2\sum_{\bf{l},\bf{k}}|\bX_{\Xi+\bf{k}}\nabla_{\bf{l}} f_s|^2   \right. \nonumber \\
&\phantom{AAAAA}\left.  + 2\sum_{\bf{l}}\sum_{\bf{k}}\sum_{\substack{\bi,\bj \in \Xi+\bf{k}\\ \bj\sim\bi}}   a_{\bi\bj}
\nabla_{\bf{l}}f_s \left\{\bX_{\Xi+{\bf k}}, [\nabla_{\bf{l}}, \bX_{\bi,\bj} ]\right\}  f_s  - 2\beta |\nabla f_s|^2\right),
\nonumber
\end{align}
where $\{\cdot, \cdot\}$ denotes the anti-commutator.  Now, since
\begin{align}
\{\bX_{\Xi+{\bf k}}, [\nabla_{\bf{l}}, \bX_{\bi,\bj} ]\}  f_s = 2 \bX_{\Xi+{\bf k}} (\delta_{\bf{l}\bj}\nabla_{\bi}- \delta_{\bf{l}\bi}\nabla_{\bj} )
+  \sum_{\substack{\bi',\bj' \in \Xi+\bf{k}\\ \bj'\sim\bi'}} a_{\bi'\bj'}(\delta_{\bf{l}\bj}\delta_{\bi\bj'}\nabla_{\bi'}- \delta_{\bf{l}\bi}\delta_{\bi\bi'}\nabla_{\bj'})\nonumber
\end{align}
there are constants $\varepsilon\in(0,2)$ and $\eta\in\mathbb{R}$ such that
\begin{align}
\frac{d}{ds}P_{t-s}|\nabla f_s|^2 &\leq
P_{t-s}\left( -(2-\varepsilon)\sum_{\bf{l},\bf{k}}|\bX_{\Xi+\bf{k}}\nabla_{\bf{l}} f_s|^2  - 2(\beta-\eta) |\nabla f_s|^2\right)\nonumber \\
 &\leq - 2(\beta-\eta)  P_{t-s}|\nabla f_s|^2.
\end{align}
Integrating this differential inequality, we obtain
\[
|\nabla f_t|^2 \leq e^{- 2(\beta-\eta) t} P_t|\nabla f|^2 .
\]
In the case when $\Xi$ is a two point set, combining this with our analysis in previous section we conclude with the following result.

\begin{thm}
A stochastic system described by the family of generators
$$\mathcal{L}_\beta \equiv \sum_{\bi\sim\bj}\bX_{\bi,\bj}^2 -\beta D,$$
with $\beta\in[0,\infty)$, undergoes a phase transition at some $\beta_c\in[0,\infty)$.  That is, for
$\beta>\beta_c$ it decays to equilibrium exponentially fast, while for $\beta\in[0, \beta_c)$
the decay to equilibrium (for certain cylinder functions) can only be algebraic.
\end{thm}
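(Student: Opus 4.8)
Our strategy is to read off the two regimes from two ingredients already available: the pointwise gradient bound
\[
|\nabla f_t|^2\le e^{-2(\beta-\eta)t}\,P_t^\beta|\nabla f|^2,\qquad f_t\equiv P_t^\beta f,
\]
obtained in the computation preceding the statement (where $P_t^\beta\equiv e^{t\mathcal{L}_\beta}$), and the explicit analysis of Section~\ref{ergodicity}. The constant $\eta\in\R$ is finite because, when $\Xi$ is a two-point set, each commutator $[\nabla_{\bf m},\bX_{\bi,\bj}]$ involves only finitely many derivatives $\nabla_{\bf l}$, uniformly over the lattice, so the anti-commutator terms are absorbed by Young's inequality at a bounded cost. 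We then set $\beta_c:=\inf\{\beta\ge 0:\ (P_t^\beta)_{t\ge 0}\text{ decays to equilibrium exponentially fast}\}$, which by the bound above satisfies $\beta_c\le\max\{\eta,0\}<\infty$.

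\emph{Exponential decay for $\beta>\beta_c$.} Since $[D,\mathcal{L}]=0$, we have $P_t^\beta=P_t^0\,e^{-t\beta D}$, where $P_t^0$ is the semigroup of Sections~\ref{solution}--\ref{ergodicity} and $e^{-t\beta D}$ is the contracting dilation $f\mapsto f(e^{-t\beta}\,\cdot)$. As $\mu_{r\G}$ is invariant for $P_t^0$ and dilation maps the Gaussian family into itself, for a cylinder function $f$ the map $s\mapsto\mu_{(sr)\G}(|\nabla f|^2)$ is a polynomial, hence bounded on $[0,1]$ by some $c_f<\infty$, and therefore
\[
\mu_{r\G}\big(P_t^\beta|\nabla f|^2\big)=\mu_{r\G}\big(e^{-t\beta D}|\nabla f|^2\big)=\mu_{(e^{-2t\beta}r)\G}\big(|\nabla f|^2\big)\le c_f,\qquad t\ge 0.
\]
Integrating the gradient bound against $\mu_{r\G}$ gives $\mu_{r\G}|\nabla P_t^\beta f|^2\le c_f\,e^{-2(\beta-\eta)t}$, and the Gaussian Poincar\'e inequality for $\mu_{r\G}$ — available because $\M$ is bounded and strictly positive definite — upgrades this to
\[
\mu_{r\G}\big(P_t^\beta f-\mu_{r\G}(P_t^\beta f)\big)^2\le C\,c_f\,e^{-2(\beta-\eta)t}.
\]
Since $\mu_{r\G}(P_t^\beta f)=\mu_{(e^{-2t\beta}r)\G}(f)\to f(0)$ at an exponential rate, $P_t^\beta f$ converges to its equilibrium value $f(0)$ exponentially fast for all $\beta>\max\{\eta,0\}$, hence for all $\beta>\beta_c$; on the broader classes of functions considered in Section~\ref{ergodicity} this extends by a standard approximation argument.

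\emph{Algebraic decay for $\beta\in[0,\beta_c)$.} Here one recycles Section~\ref{ergodicity}. Since $\mathcal{L}_0\equiv\sum_{\bi\sim\bj}\bX_{\bi,\bj}^2$ maps the linear span of $\{x_{\bf k}^2\}_{{\bf k}\in\Z^N}$ into itself, acting on the coefficient sequence as $b^2$ times the discrete Laplacian $\triangle$ on $\Z^N$, while $D$ acts there as multiplication by $2$, the functions $W({\bf k},t):=P_t^\beta(x_{\bf k}^2)$ solve $\partial_t W=b^2\triangle W-2\beta W$ on $\Z^N$, so that $W(t)=e^{-2\beta t}e^{tb^2\triangle}W(0)$. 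Proceeding exactly as in the remark following Corollary~\ref{cor:ConvergenceRate}, $\mu_{r\G}\big(P_t^\beta x_{\bf k}^2-\mu_{r\G}(P_t^\beta x_{\bf k}^2)\big)^2$ is evaluated in closed form through the discrete heat kernel, whose on-diagonal value decays like $t^{-N/2}$ by the local central limit theorem; in the sub-critical range the net rate is only polynomial. Moreover, running the computation preceding Lemma~\ref{lem:PolynomConv} with $-\mathcal{L}_\beta$ in place of $-\mathcal{L}$ on the cylinder functions $f_\Lambda=\sum_{\bi\in\Lambda}x_\bi^2$ shows that $\mu_{r\G}\big(f_\Lambda(-\mathcal{L}_\beta f_\Lambda)\big)\big/\mu_{r\G}|f_\Lambda-\mu_{r\G}f_\Lambda|^2\to 0$ along boxes $\Lambda$ invading $\Z^N$, so that no Poincar\'e inequality — hence no uniform exponential (spectral-gap) rate — can hold for $\beta<\beta_c$; in this range the whole family $\{\mu_{r\G}\}_{r>0}$ survives as invariant measures, in contrast with the unique equilibrium of the supercritical regime.

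\emph{Main obstacle.} The delicate point is the exact location of $\beta_c$. The crude differential inequality yields exponential decay only for $\beta>\max\{\eta,0\}$, whereas the explicit sub-critical estimates are cleanest at $\beta=0$; the argument as sketched therefore establishes the existence of \emph{some} $\beta_c\in[0,\infty)$ but does not pin it down, and closing the gap $0<\beta<\beta_c$ — in particular deciding whether $\beta_c=0$ — would require controlling the competition between the degenerate diffusion $\sum\bX^2$ and the contracting drift $-\beta D$ directly, together with a density argument in $\X_r$, on which $P_t^\beta$ fails to be a contraction once $\beta>0$.
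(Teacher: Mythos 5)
Your proposal correctly identifies the engine of the paper's argument: the pointwise gradient bound $|\nabla f_t|^2 \le e^{-2(\beta-\eta)t}P_t|\nabla f|^2$, obtained by differentiating $P_{t-s}|\nabla f_s|^2$, commuting, and absorbing the cross-terms by Young's inequality. The paper's proof is in fact nothing more than this computation followed by the line ``combining this with our analysis in previous section,'' so your reconstruction of the exponential regime — and in particular the observation that $[D,\mathcal{L}]=0$ lets you factor $P_t^\beta = P_t^0 e^{-t\beta D}$, that $e^{-t\beta D}$ scales the Gaussian covariance by $e^{-2t\beta}$, and that the Gaussian Poincar\'e inequality then converts the gradient bound into variance decay — is a legitimate and more explicit fleshing-out of a step the paper leaves implicit. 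Your honesty about the gap (that the differential inequality controls only $\beta>\max\{\eta,0\}$ while the sub-critical analysis is done at $\beta=0$, so $\beta_c$ is only shown to exist, not located) is also accurate, and is a gap the paper itself shares.

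There is, however, an internal tension in your sub-critical paragraph that you should address more squarely. You correctly compute that on $\mathrm{span}\{x_{\bf k}^2\}$ one has $\partial_t W = b^2\triangle W - 2\beta W$, hence $W(t) = e^{-2\beta t}e^{tb^2\triangle}W(0)$. But this formula shows that for \emph{every} $\beta>0$ the cylinder observable $P_t^\beta x_{\bf k}^2$ decays exponentially (the prefactor $e^{-2\beta t}$ dominates any polynomial from the discrete heat kernel), so on this test class the decay is ``only algebraic'' precisely when $\beta=0$. Read literally, your own computation therefore suggests $\beta_c=0$, in which case the interval $[0,\beta_c)$ is empty and the second half of the theorem is vacuous — yet two sentences later you write ``in the sub-critical range the net rate is only polynomial'' as if this were established on an open range of $\beta$. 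The absence-of-Poincar\'e argument via $f_\Lambda$ you append has the same issue: the ratio computation in Section~\ref{ergodicity} is carried out with respect to $\mu_{r\G}$, which is no longer invariant once $\beta>0$, so it does not directly obstruct a spectral gap for $\mathcal{L}_\beta$ relative to its actual (degenerate, point-mass) equilibrium. You partially acknowledge this in the closing paragraph, but it would be cleaner to state up front that both your argument and the paper's establish only: (a) algebraic decay at $\beta=0$ (Section~\ref{ergodicity}), (b) exponential decay for $\beta$ sufficiently large (the gradient bound), and therefore (c) existence of a transition threshold $\beta_c\in[0,\eta]$, with no information on the open interval $(0,\beta_c)$ and with $\beta_c=0$ not excluded.
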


\section{Homogenisation} \label{Section.10}

In Section \ref{ergodicity} it was shown that the semigroup $(P_t)_{t\geq 0}$ with generator $\mathcal{L}$ given by \eqref{eqn:GeneratorMainExample} is ergodic. We can therefore apply Theorem 1.8 of \cite{[KV-1986]} (see also \cite{K-O}) to conclude that the following functional CLT holds.
\begin{prop}
Let $\mathcal{L}$ be given by \eqref{eqn:GeneratorMainExample}, and $(Y_t)_{t\geq 0}$ be the corresponding Markov process.  Suppose $F\in \mathcal{D}((-\mathcal{L})^{-\frac{1}{2}})$ is such that $\mu_{r\G}(F)=0$, where $\mu_{r\G}$ is as above, with $r>0$.  Let $\mathbf{P}^{\mu_{r\G}}$ be the probability measure corresponding to the stationary Markov process with the same transition functions as $Y_t$, and $(\mathcal{G}_t)_{t\geq0} = (\sigma\{Y_s,s\leq t\})_{t\geq 0}$ be the filtration generated by $Y_t$.   Then there exists a square integrable martingale
$(M_t)_{t\geq 0}$ on the probability space $(\Omega,(\mathcal{G}_t)_{t\geq 0},\mathbf{P}^{\mu_{r\G}})$ with stationary increments such that $M_0=0$ and
\[
\liml_{t\to\infty}\frac{1}{\sqrt{t}}\supl_{0\leq s\leq t}\left|\int_0^t F(Y_s)\,ds-M_s\right|=0,
\]
in probability with respect to $\mathbf{P}^{\mu_{r\G}}$. Moreover,
\[
\liml_{t\to\infty}\frac{1}{t}\mathbb{E}^{\mathbf{P}^{\mu_{r\G}}}|Y_t-M_t|^2=0.
\]
\end{prop}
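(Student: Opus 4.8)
The plan is to obtain this as a direct application of the Kipnis--Varadhan martingale approximation theorem (Theorem 1.8 of \cite{[KV-1986]}), so the task reduces to verifying that our setting meets its hypotheses. That theorem requires a stationary Markov process which is reversible with respect to its invariant law, that the process be ergodic, and that the observable $F$ satisfy $\mu_{r\G}(F)=0$ together with $F\in\mathcal{D}((-\mathcal{L})^{-\frac12})$, equivalently $\int_0^\infty\mu_{r\G}(FP_tF)\,dt<\infty$. Reversibility of $(P_t)_{t\geq0}$ with respect to $\mu_{r\G}$ is Corollary \ref{cor:SemigroupReversibility}; in particular, under $\mathbf{P}^{\mu_{r\G}}$ the process is stationary and $\mathcal{L}$ is a non-positive self-adjoint operator on $L^2(\mu_{r\G})$ with a well-defined functional calculus. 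Ergodicity --- needed so that $\ker\mathcal{L}$ consists only of the constants, hence so that $(-\mathcal{L})^{-1/2}$ is densely defined on the mean-zero subspace --- is provided by Proposition \ref{prop:Ergodicity} and the convergence estimates of Section \ref{Section.7}. The condition on $F$ is our standing assumption. With these three ingredients in hand, the cited theorem applies and yields exactly the stated martingale $M$ and the two limits.

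For orientation I would recall the construction that identifies $M$. For $\varepsilon>0$ put $u_\varepsilon:=(\varepsilon-\mathcal{L})^{-1}F\in\mathcal{D}(\mathcal{L})$. The spectral theorem together with $F\in\mathcal{D}((-\mathcal{L})^{-1/2})$ shows that $(-\mathcal{L})^{1/2}u_\varepsilon$ is Cauchy in $L^2(\mu_{r\G})$ as $\varepsilon\downarrow0$ and that $\varepsilon\,\mu_{r\G}(u_\varepsilon^2)\to0$. Since $Y$ solves the martingale problem for $\mathcal{L}$ (Lemma \ref{generator} and Corollary \ref{rep}), Dynkin's formula shows that
\[
M_t^\varepsilon:=u_\varepsilon(Y_t)-u_\varepsilon(Y_0)-\int_0^t\mathcal{L}u_\varepsilon(Y_s)\,ds
\]
is a square-integrable $(\mathcal{G}_t)$-martingale with stationary increments under $\mathbf{P}^{\mu_{r\G}}$, with $\mathbb{E}^{\mathbf{P}^{\mu_{r\G}}}|M_t^\varepsilon|^2=2t\,\mu_{r\G}\big(((-\mathcal{L})^{1/2}u_\varepsilon)^2\big)$. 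Using $\mathcal{L}u_\varepsilon=\varepsilon u_\varepsilon-F$ this rearranges to
\[
\int_0^tF(Y_s)\,ds=M_t^\varepsilon+\big(u_\varepsilon(Y_t)-u_\varepsilon(Y_0)\big)+\varepsilon\int_0^tu_\varepsilon(Y_s)\,ds.
\]
Letting $\varepsilon\downarrow0$, the martingales $M^\varepsilon$ converge in $L^2(\mathbf{P}^{\mu_{r\G}})$, uniformly on bounded time intervals by Doob's inequality, to a square-integrable martingale $M$ with stationary increments and $M_0=0$; the boundary term stays bounded in $L^2$ uniformly in $t$, and the last term is controlled, after optimising the choice $\varepsilon=\varepsilon(t)\downarrow0$, by a quantity that is $o(\sqrt t)$. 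This delivers both $\frac1{\sqrt t}\sup_{0\le s\le t}\big|\int_0^sF(Y_u)\,du-M_s\big|\to0$ in $\mathbf{P}^{\mu_{r\G}}$-probability and $\frac1t\,\mathbb{E}^{\mathbf{P}^{\mu_{r\G}}}\big|\int_0^tF(Y_s)\,ds-M_t\big|^2\to0$.

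The probabilistic machinery above is entirely standard once the hypotheses are granted, so the only points requiring care are structural: that $\mathcal{L}$ really is a self-adjoint non-positive generator on $L^2(\mu_{r\G})$ (so that $u_\varepsilon$ and its fractional powers are meaningful), and that Dynkin's formula is legitimately available for $u_\varepsilon$, which need not be a cylinder or even a continuous function. Both are already in place: self-adjointness, closedness and strong continuity of $\mathcal{L}$ on $L^2(\mu_{r\G})$ follow from the analysis of Sections \ref{Section.5} and \ref{Section.6}, and the martingale-problem identification of Lemma \ref{generator}, extended by the density and contractivity arguments used throughout Sections \ref{Section.5}--\ref{Section.7}, lets one pass from cylinder functions to general $u_\varepsilon\in\mathcal{D}(\mathcal{L})$. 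Hence I expect the main obstacle to be purely a matter of bookkeeping --- checking that the hypotheses of \cite{[KV-1986]} hold in precisely the form stated there --- after which the conclusion is immediate.
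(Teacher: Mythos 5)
Your proposal is correct and takes essentially the same approach as the paper: the paper's proof is simply to invoke Theorem~1.8 of \cite{[KV-1986]}, citing the ergodicity established in Section~\ref{ergodicity} (and, implicitly, the reversibility from Corollary~\ref{cor:SemigroupReversibility}) to verify its hypotheses. Your additional sketch of the resolvent construction $u_\varepsilon=(\varepsilon-\mathcal{L})^{-1}F$ and the passage $\varepsilon\downarrow 0$ is a faithful account of the internal mechanics of that theorem, but is not something the paper spells out.
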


\paragraph{Acknowledgements:}
We would like to thank Z. Brze\'{z}niak and B. Go{\l}dys for useful remarks and attention to the work.

\bibliography{Erg280410}

\end{document}